
 \documentclass[11pt]{amsart}

\usepackage{amsfonts, amsmath,amscd, amssymb, latexsym, mathrsfs, slashed, stmaryrd, verbatim, wasysym }
\usepackage[all]{xy}
\usepackage{slashed}


\newtheorem{theorem}{Theorem}
\newtheorem{corollary}[theorem]{Corollary}

\newtheorem{lemma}[theorem]{Lemma}
\newtheorem{proposition}[theorem]{Proposition}

\numberwithin{equation}{section}
\numberwithin{theorem}{section}

\renewcommand{\tfrac}[2]{\textstyle \frac{#1}{#2} \displaystyle}


\newcommand{\script}[1]{\textsc{#1}}
\newcommand{\df}[1]{\mathfrak{#1}}

\newcommand{\bhs}[1]{\mathfrak B_{#1}}

\renewcommand{\bar}{\overline}

\renewcommand{\hat}[1]{\widehat{#1}}

\newcommand{\wt}[1]{\widetilde{#1}}
\newcommand{\rest}[1]{\!\big\rvert_{#1}}


\newcommand{\Rp}{\mathbb{R}^+}

\newcommand\pa{\partial}

\newcommand{\db}{\bar{\partial}}

\newcommand\cf{cf\@. }
\newcommand\ie{i\@.e\@. }

\newcommand\eps\varepsilon
\newcommand\p\phi
\renewcommand\P\Phi
\renewcommand\det{\operatorname{det}}

\newcommand\vpsi{\mathbf{v}_\psi}
\newcommand\hpsi{\mathbf{h}_\psi}

\renewcommand\v[1]{\mathbf{v}_{ #1 } }
\newcommand\h[1]{\mathbf{h}_{ #1 } }

\newcommand{\Cl}{\mathbb{C}l}


\newcommand\fC{\operatorname{\phi-c}}
\newcommand\fD{\operatorname{\phi-hc}}

\newcommand\FC{{}^{\fC}}
\newcommand\FD{{}^{\fD}}

\newcommand\CI{{\mathcal{C}}^{\infty}}

\newcommand{\lrpar}[1]{\left( #1 \right)}
\newcommand{\lrspar}[1]{\left[ #1 \right]}
\newcommand\ang[1]{\langle #1 \rangle}
\newcommand{\lrbrac}[1]{\left\lbrace #1 \right\rbrace}

\newcommand{\abs}[1]{\lvert #1 \rvert}

\DeclareMathOperator*{\btimes}{\times} 
\DeclareMathOperator*{\bplus}{\oplus} 

\newcommand\Ch{\operatorname{Ch}}

\newcommand\coker{\operatorname{coker}}

\newcommand\diag{\operatorname{diag}}
\newcommand\Diff{\operatorname{Diff}}
\newcommand\dvol{\operatorname{dvol}}

\newcommand\ev{\mathrm{even}}
\DeclareMathOperator*{\FP}{\operatorname{FP}}
\newcommand\Gr{\operatorname{Gr}\left( \Hom \right)}
\newcommand\GrP{\operatorname{Gr}_{\psi}\left( \Hom \right)}
\newcommand\Hom{\operatorname{Hom}}
\newcommand\Id{\operatorname{Id}}
\newcommand\ind{\operatorname{ind}}
\newcommand\Ind{\operatorname{Ind}}

\newcommand\Ran{\operatorname{Ran}}

\DeclareMathOperator*{\Res}{\operatorname{Res}}
\newcommand\Ren[2]{{}^R\operatorname{ #1 }\left( #2 \right)}
\newcommand\Ric{\operatorname{Ric}}
\newcommand\RTr[1]{{}^R\operatorname{Tr}\left( #1 \right)}

\newcommand\scal{\mathrm{scal}}
\newcommand\str{\operatorname{str}}
\newcommand\Str{\operatorname{Str}}

\newcommand\Tr{\operatorname{Tr}}

\newcommand\tr{\operatorname{tr}}



\newcommand\bfN{\mathbf{N}}

\newcommand\bbA{\mathbb{A}}
\newcommand\bbB{\mathbb{B}}

\newcommand\bbE{\mathbb{E}}

\newcommand\bbH{\mathbb{H}}

\newcommand\bbQ{\mathbb{Q}}
\newcommand\bbR{\mathbb{R}}
\newcommand\bbS{\mathbb{S}}

\newcommand\bbZ{\mathbb{Z}}

\newcommand\cA{\mathcal{A}}
\newcommand\cB{\mathcal{B}}

\newcommand\cD{\mathcal{D}}

\newcommand\cH{\mathcal{H}}

\newcommand\cJ{\mathcal{J}}
\newcommand\cK{\mathcal{K}}

\newcommand\cO{\mathcal{O}}
\newcommand\cP{\mathcal{P}}

\newcommand\cR{\mathcal{R}}
\newcommand\cS{\mathcal{S}}

\newcommand\cV{\mathcal{V}}


\DeclareMathAlphabet{\mathpzc}{OT1}{pzc}{m}{it}
\newcommand{\cl}[1]{\mathpzc{cl}\left( #1 \right)}
\newcommand{\cll}{\mathpzc{cl}}


\newcommand\paperintro%
        {%
         }
\newcommand\paperbody%
        {%
         }


\title[Families index for hyperbolic cusps]{Families index for manifolds with hyperbolic cusp singularities}
\author{Pierre Albin}
\author{Fr\'ed\'eric Rochon}
\address{Department of Mathematics, Massachusetts Institute of Technology}
\email{pierre@math.mit.edu} 
\address{Department of Mathematics, University of Toronto}
\email{rochon@math.toronto.edu} 
\thanks{The first author was partially supported by a NSF postdoctoral fellowship. The second author was supported by a postdoctoral fellowship of the Fonds qu\'eb\'ecois de la recherche sur la nature et les technologies.}

\newcommand\Sec[1]{\cS_{ #1 }} 
\newcommand\Curv[1]{\hat \cR_{ #1 }} 

\setcounter{tocdepth}{1} 

\begin{document}


\begin{abstract}
Manifolds with fibered hyperbolic cusp metrics include hyperbolic manifolds with cusps and locally symmetric spaces of $\bbQ$-rank one. 
We extend Vaillant's treatment of Dirac-type operators associated to these metrics by weaking the hypotheses on the boundary families through the use of Fredholm perturbations as in the family index theorem of Melrose and Piazza
and by treating the index of families of such operators.
We also extend the index theorem of Moroianu and Leichtnam-Mazzeo-Piazza to families of perturbed Dirac-type operators associated to fibered cusp metrics (sometimes known as fibered boundary metrics).
\end{abstract}

\maketitle
\tableofcontents

\paperintro \section*{Introduction}

One of the main topics in geometric analysis is the study of elliptic operators on manifolds with singularities. Among the simplest singularities are hyperbolic cusps. Indeed it is well known that these are the only singularities appearing among hyperbolic manifolds with finite volume.
A related type of singularity which we will refer to as a fibered hyperbolic cusp occurs among the natural metrics on  locally symmetric spaces of $\bbQ$ rank one. Along these cusps there are directions in which the metric degenerates but also an underlying cylindrical core which does not.
We call fibered hyperbolic cusp metrics a class of metrics whose singularities have this kind of structure but that need not have constant curvature or any underlying algebraic structure.
The goal of this article is to analyze the families index for families of Dirac-type operators associated to these metrics.

The index theory of locally symmetric spaces of $\bbQ$-rank one has been studied at length by Werner M\"uller (e.g., \cite{Muller}). 
Boris Vaillant \cite{Vaillant} introduced fibered hyperbolic cusp metrics (which he referred to as $d$-metrics) and adapted the methods of Melrose \cite{APS Book}, Mazzeo-Melrose \cite{MazzeoMelrose}, and Dai-Melrose \cite{DaiMelrose} to construct the resolvent and the heat kernel of associated Dirac-type operators.
To extend his constructions to families of Dirac-type operators, we use an
adapted version of the proof of the families index theorem on asymptotically cylindrical manifolds of Melrose-Piazza \cite{MelrosePiazza}.

Our result can be applied to the index theory of the ``fibered cusp" metrics introduced by Mazzeo and Melrose in \cite{MazzeoMelrose} where they are referred to as $\phi$-metrics. These metrics are conformally related to Vaillant's fibered hyperbolic cusp metrics and Moroianu has shown \cite{Moroianu} that for a single operator one can deduce an index theorem for Dirac-type operators from Vaillant's index theorem. A different proof via adiabatic limits was given by Leichtnam, Mazzeo, and Piazza \cite{LMP}. We show that Moroianu's proof extends to families of Dirac-type operators and so deduce a families index theorem for fibered cusp metrics from the corresponding theorem for fibered hyperbolic cusp metrics.

In a companion paper \cite{AlbinRochon}, we also investigate some interesting 
implications of our results in Teichm\"uller theory.  More specifically,
 on the Teichm\"uller universal curve associated to Riemann surfaces of
genus $g$ with $n$ punctures, various families of $\db$ operators arise naturally
and can be interpreted as Dirac type hyperbolic cusp operators. Such
families were already studied by Takhtajan and Zograf \cite{TZ}.  Using
the Selberg Zeta function to define the determinant of the Laplacian, they
were able to compute explicitly the curvature of the Quillen connection
for the determinant line bundles associated to these families of $\db$ 
operators.  This curvature corresponds to the two form part of the 
family index.  In \cite{AlbinRochon}, we use our results to get a formula for
the `full' family index, generalizing in this way the formula of Takhtajan and
Zograf.

We now describe our results precisely and the contents of the paper.

\section*{Statement of results}

The simplest example of a fibered hyperbolic cusp metric is the hyperbolic metric on the quotient of a hyperbolic space by a group of isometries containing parabolic elements.
For instance, after moding out $\bbH^2$ (thought of as the half-plane $\{x\geq 0\}$) by the action of the parabolic isometry
\begin{equation*}
	\lrpar{x,y} \mapsto \lrpar{x,y+1}
\end{equation*}
we end up with the infinite half cylinder
\begin{equation*}
	\Rp\times \bbS^1 \text{ with the metric } g = \frac{dx^2}{x^2} + \frac{d\theta^2}{x^2}.
\end{equation*}
Introducing the change of variable $r=-1/x$ the hyperbolic metric along the cylinder takes the form
\begin{equation*}
	\frac{dr^2}{r^2} + r^2 d\theta^2.
\end{equation*}
This is the model of a {\em hyperbolic cusp metric}.

Similarly, for a locally symmetric space of $\bbQ$ rank one, $X$, a neighborhood of infinity has the form 
\begin{equation*}
	M \times \bbR^+_r
\end{equation*}
with $M$ the total space of a fibration $Z - M \xrightarrow{\phi} Y$ and the natural Riemannian metric along the ends is given by
\begin{equation*}
	dr^2 + \phi^*g_Y + e^{-2Cr}g_Z
\end{equation*}
with $g_Y$ a metric on Y and $g_Z$ a metric along the fibers.
Setting $C=1$ and introducing $x=e^{-Cr}$ this metric takes the form
\begin{equation}\label{Qrank1}
	\frac{dx^2}{x^2} + \phi^*g_Y + x^2 g_Z.
\end{equation}

A metric on the interior of a manifold with boundary $X$ is called an {\bf exact fibered hyperbolic cusp metric} if the boundary is the total space of a fibration $Z - \pa X \xrightarrow{\phi} Y$ and, near the boundary, the metric is asymptotically of the form \eqref{Qrank1} (see \S\ref{sec:BhvCusps}). Fibered hyperbolic cusp objects will generally be labeled $\fD$.

Here and below $x$ denotes a {\bf boundary defining function} or `bdf', i.e., $x$ is a smooth non-negative function on $X\cup \pa X$ such that
\begin{equation*}
	\pa X = \{ x=0 \}, 
	\quad |dx|\rest{\pa X}>0.
\end{equation*}
In a coordinate chart near the boundary - with $y_i$, $z_j$ denoting coordinates along the base $Y$ of the fibration and the fibre $Z$ respectively - the set of vector fields of finite length with respect to a $\fD$ metric (denoted $\cV_{\fD}$) is locally spanned by
\begin{equation}\label{LocalBasis}
	\left\{ x\pa_x, \pa_{y_i}, \frac1x \pa_{z_j} \right\}.
\end{equation}
As explained by Melrose \cite{APS Book}, there is a vector bundle $\FD TX$ whose sections are precisely the elements of $\cV_{\fD}$.
While a vector field in the span of \eqref{LocalBasis} is singular at the boundary as a section of $TX$, it is non-singular as a section of $\FD TX$. 
Intuitively, one can think of replacing $TX$ by $\FD TX$ as part of a wider `change of category' and then Vaillant's index theorem is the usual Atiyah-Singer index theorem in this category. This viewpoint is espoused in \cite{APS Book} where it is shown that many constructions of Riemannian geometry have natural analogues for asymptotically cylindrical manifolds.

Thus a $\fD$ Clifford module is a vector bundle $E$ with an action of the bundle of Clifford algebras, $\Cl\lrpar{\FD T^*X}$ of the dual bundle to $\FD TX$, as well as a compatible Hermitian metric and connection.  
The corresponding Dirac-type operator is referred to as a $\fD$ Dirac-type operator and is denoted $\eth^E_{\fD}$ or simply $\eth_{\fD}$.

A particular case of $\fD$ metrics are the asymptotically cylindrical or `$b$' metrics. The index theorem for asymptotically cylindrical metrics was proven by Atiyah, Patodi, and Singer \cite{APS} and extended to more general $b$-metrics by Melrose \cite{APS Book}. The extension to families of Dirac operators associated to asymptotically cylindrical metrics was carried out by Bismut and Cheeger \cite{BC1, BC2} and then by Melrose and Piazza \cite{MelrosePiazza, MelrosePiazza2}. To discuss our results it is convenient to start by reviewing these.

If $X - M \xrightarrow{\psi} B$ is a fibration of manifolds with boundary then a family $\eth_b$ of Dirac-type operators associated to a family of $b$-metrics on the fibers is always elliptic, but not necessarily Fredholm. This is determined by the family of operators on the boundary fibration $\pa X - \pa M \xrightarrow{\pa \psi} B$, say $\eth_{\pa}$, induced by restricting $\eth_b$ to the boundary. Bismut and Cheeger proved, for twisted Dirac operators, that if $\eth_{\pa}$ is an invertible family then $\eth_b$ is a continuous family of Fredholm operators and hence defines an index `bundle' as an element of the K-theory of $B$. The Chern character of the index bundle, as an element in $H^{\ev}(B)$, was computed to be
\begin{equation*}
	\Ch \Ind\lrpar{\eth_b} = \int_{M/B} \hat A\lrpar{M/B}\Ch\lrpar E - \hat\eta\lrpar{ \eth_{\pa} }
\end{equation*}
where $\hat A$ is the A-roof genus, $\Ch\lrpar E$ is the Chern character of the twisting bundle, and $\hat \eta$ is the $\eta$-form invariant introduced in \cite{BC0}. (With the convention that $[\hat\eta]_{[0]}$ is equal to one-half of the numerical $\eta$-invariant.)

Melrose and Piazza were able to prove an index theorem in this context with no assumption on the boundary family by perturbing the original family to make it Fredholm. The existence of a Fredholm perturbation among families of smoothing operators was shown to be a topological condition which, by cobordism invariance of the index, always holds in this context.
A perturbed family of $b$-Dirac operators defines an index bundle in $K(B)$ (parametrized by a `spectral section' $P$)
whose Chern character is given by essentially the formula above
\begin{equation*}
	\Ch \Ind_P\lrpar{ \eth_b } = \int_{M/B} \hat A\lrpar{M/B}\Ch'\lrpar E - \hat\eta_P\lrpar{ \eth_{\pa} }
\end{equation*}
with an appropriately modified $\eta$-form. 
A particular case of interest is when $\eth_{\pa}$ is not invertible but has constant rank kernel. Then the modified $\eta$-form is equal to the Bismut-Cheeger $\eta$ form plus the Chern character of $\ker \eth_{\pa}$, a formula that was first stated in \cite{BC3}.

Consider now a family of manifolds $X - M \xrightarrow{\psi} B$ where the boundaries of the fibers are themselves the total space of a fibration $Z - \pa X \xrightarrow{\phi} Y$, so that altogether we have
\begin{equation}\label{fullfib}
	\xymatrix { & & X \ar@{-}[r] & M \ar[d]^{\psi} \\
	Z \ar@{-}[r] & \pa X \ar[d]^{\phi}  \ar@{^{(}->}[ur] \ar@{-}[r] 
		& \pa M \ar[d]^{\wt \phi}  \ar@{^{(}->}[ur]  & B \\
	& Y \ar@{-}[r] & D \ar[ur]_{\psi_D} & }
\end{equation}
As mentioned above a family of $\fD$ Dirac-type operators $\eth_{\fD}$ is necessarily singular at the boundary. 
So to understand the behavior at the boundary we first multiply by $x$ and then restrict $x\eth_{\fD}$ to the boundary, thus obtaining the boundary family of Dirac operators in this context.
This is not a family of operators on $\pa X$ but instead is a $\phi$-vertical family of operators which we denote $\eth^V_{\fD}$.
If this family of operators is invertible then the original family $\eth_{\fD}$ is Fredholm. More generally one can often find a invertible perturbation of $\eth^V_{\fD}$ that extends to a smoothing Fredholm perturbation of $\eth_{\fD}$. As discussed in \S\ref{sec:Perturbations} these are again parametrized by spectral sections, though it is necessary to restrict to spectral sections equivariant with respect to the action of $\Cl(T^*D/B)$.
We denote the resulting index bundle by $\Ind_{P^V}(\eth_{\fD})$.

One instance where this is possible is when $\eth^V_{\fD}$ is not invertible but has kernel of constant rank. 
In this case, the {\bf constant rank case}, a subtler analysis is possible. 
These kernels fit together to form a vector bundle over $D$,
\begin{equation*}
	\cK := \ker\eth^V \to D,
\end{equation*}
and once we have restricted $\eth_{\fD}$ to sections of $\cK$, the resulting operator has non-singular restriction to the boundary. In this way, we obtain a second boundary family, a $\psi_D$-vertical family of operators, which we denote $\eth^{H,\cK}_{\fD}$. Vaillant has shown that $\eth_{\fD}$ is Fredholm (as an operator on $L^2(g_{\fD})$) if and only if $\eth^{H,\cK}_{\fD}$ is an invertible family of operators. If $\eth^{H,\cK}_{\fD}$ is not invertible, but can be perturbed to be so, then the perturbations are again parametrized by spectral sections and we denote the resulting index bundle by $\Ind_{P^H}(\eth_{\fD})$.

\begin{theorem}\label{DFamiliesIndex}
Let $M$ be as in \eqref{fullfib}, $g$ a $\psi$-vertical family of exact $\fD$ metrics, $E$ an associated Clifford module over $M$, and $\eth_{\fD}$ the corresponding family of $\fD$ Dirac-type operators. Denote $\dim M/B$ by $n$ and $\dim D/B$ by $h$.

If $\eth^V$ admits an appropriate invertible perturbation, and $\Ind_{P^V}(\eth_{\fD})$ is the resulting index bundle, then in $H^{\ev}(B)$ we have
\begin{multline}\label{LargePerturbIndex}
	\Ch ( \Ind_{P^V}(\eth_{\fD}) )
	= \frac1{(2\pi i)^{n/2}} \int_{M/B} \hat A(M/B) \Ch'(E)
	\\ - \frac1{(2\pi i)^{\lfloor\frac{h+1}{2}\rfloor}} \int_{D/B} \hat A(D/B) \hat \eta_P\lrpar{ \eth^V_{\fD} }.
\end{multline}

If the rank of $\ker \eth^V$ is constant independent of the base point in $D$ and $P$ is a spectral section for 
$\eth^H := \eth_{\fD}\rest{\pa M, \; \cK}$, then in $H^{\ev}(B)$ we have
\begin{multline}\label{SmallPerturbIndex}
	\Ch(\Ind_{P^H}(\eth_{\fD}) )
	= \frac1{(2\pi i)^{n/2}} \int_{M/B} \hat A(M/B) \Ch'(E)
	\\ - \frac1{(2\pi i)^{\lfloor\frac{h+1}{2}\rfloor}} \int_{D/B} \hat A(D/B) \hat \eta\lrpar{\eth^V_{\fD}}
	- \hat\eta_P \lrpar{ \eth^{H,\cK}_{\fD} }.
\end{multline}

If the rank of $\ker \eth^V$ is constant independent of the base point in $D$ and $\eth^{H,\cK}_{\fD}$ is invertible then we have the following equality of forms on $B$, 
\begin{multline}\label{IntroUnperturbedIndex}
\Ch\lrpar{ \Ind\lrpar{\eth_{\fD}}, \nabla^{\Ind} } \\
	= \frac1{(2\pi i)^{n/2}}  \int_{M/B} \hat A\lrpar{M/B}  \Ch'\lrpar E 
	- \frac1{(2\pi i)^{\lfloor\frac{h+1}{2}\rfloor}} \int_{D/B} \hat A\lrpar{D/B}\hat\eta^+\lrpar{\eth^V,E} \\
	- \hat\eta\lrpar{\eth^{H,\cK}_{\fD}} 
	- d \int_0^{\infty} \Str\lrpar{  \frac{\pa \bbA_{\fD}^t }{\pa t}  e^{-\lrpar{\bbA_{\fD}^t}^2} }.
\end{multline}
\end{theorem}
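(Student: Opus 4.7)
The plan is to follow the heat equation and Bismut superconnection approach, adapted to the fibered hyperbolic cusp setting via Vaillant's heat calculus and to perturbations via the spectral section machinery of Melrose--Piazza. First form the $\fD$ Bismut superconnection $\bbA_\fD^t$ for $\eth_\fD$, rescaled in $t$, together with a smoothing Fredholm perturbation dictated by the chosen spectral section ($P^V$ in case \eqref{LargePerturbIndex}, $P^H$ in case \eqref{SmallPerturbIndex}). In the Fredholm regime this represents $\Ch(\Ind)$ by $\Str(e^{-(\bbA_\fD^t)^2})$ for every $t>0$, and the three formulas arise from the transgression identity
\begin{equation*}
	\lim_{t\to 0}\Str\left(e^{-(\bbA_\fD^t)^2}\right) - \lim_{t\to\infty}\Str\left(e^{-(\bbA_\fD^t)^2}\right) = d\int_0^\infty \Str\!\left( \frac{\pa \bbA_\fD^t}{\pa t}\, e^{-(\bbA_\fD^t)^2}\right)dt.
\end{equation*}

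For the $t\to 0$ limit I would carry out the usual local index theorem argument inside Vaillant's $\fD$ heat calculus: Getzler rescaling works identically to the closed case since the $\fD$ local frame \eqref{LocalBasis} is orthonormal, and Vaillant's uniform short-time estimates deliver the bulk contribution $\frac{1}{(2\pi i)^{n/2}}\int_{M/B}\hat A(M/B)\Ch'(E)$ as a form on $B$.

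The $t\to\infty$ limit produces the boundary contributions and is where the three cases diverge. In case \eqref{LargePerturbIndex} the perturbed family is globally Fredholm, so the limit is $\Ch(\Ind_{P^V}(\eth_\fD))$; by restricting the heat kernel to the boundary face of the $\fD$ double space and matching with the boundary Bismut superconnection one identifies the transgression integral with $\int_{D/B}\hat A(D/B)\,\hat\eta_P(\eth^V_\fD)$. Case \eqref{SmallPerturbIndex} requires Vaillant's splitting of the resolvent into small- and large-eigenvalue parts: the large-eigenvalue part contributes the genuine $\hat\eta(\eth^V_\fD)$ term, while the small-eigenvalue part projects the problem onto the finite rank bundle $\cK\to D$, on which the induced Bismut superconnection yields the additional $\hat\eta_P(\eth^{H,\cK}_\fD)$ term. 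Case \eqref{IntroUnperturbedIndex} is then the form-level version of the preceding identity: invertibility of $\eth^{H,\cK}_\fD$ makes the transgression absolutely convergent and one keeps the residual exact form rather than integrating it out.

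The main obstacle is the two-stage nature of Fredholm theory for $\fD$ operators. In contrast to the asymptotically cylindrical case, where invertibility of a single boundary operator suffices, Vaillant showed that Fredholmness of $\eth_\fD$ requires first invertibility (or perturbation) of the vertical family $\eth^V_\fD$ along the fibres of $\phi$ and then, given a constant-rank kernel, invertibility of the horizontal reduction $\eth^{H,\cK}_\fD$ on $D$. Making the perturbation theory compatible with this two-stage structure demands spectral sections equivariant under $\Cl(T^*D/B)$ and careful tracking of how modifications of $\eth^V$ propagate to the induced operator on $\cK$. At the level of heat kernels this forces a refined asymptotic expansion at the corner of the $\fD$ double space together with an adiabatic matching of the two boundary scales, and this is the technical heart of the argument.
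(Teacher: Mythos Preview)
Your overall architecture is right---rescaled Bismut superconnection, compare $t\to 0$ and $t\to\infty$, work in Vaillant's heat calculus, use Melrose--Piazza perturbations---but the attribution of the boundary terms is wrong in a way that hides the real analytic input.

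First, in the constant-rank cases \eqref{SmallPerturbIndex} and \eqref{IntroUnperturbedIndex} the heat kernel $e^{-(\bbA_{\fD}^t)^2}$ is \emph{not} trace class for $t>0$: restricted to the diagonal it is only a $b$-density, with a simple pole at the face $\bhs{11,0}$ of the heat space. So your displayed transgression identity is false as written; one must use the renormalized supertrace ${}^R\Str$ and then the derivative
\[
\partial_t\,{}^R\Ch(\bbA_{\fD}^t)
= -d_B\,{}^R\Str\!\left(\tfrac{\pa\bbA_{\fD}^t}{\pa t}e^{-(\bbA_{\fD}^t)^2}\right) + \hat\eta(t)
\]
picks up an extra term $\hat\eta(t)$ coming from the trace-defect formula (the residue at $\bhs{11,0}$). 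It is $\int_0^\infty\hat\eta(t)\,dt$ that produces $\hat\eta(\eth^{H,\cK}_{\fD})$, not any splitting of the resolvent into small and large eigenvalues. (In the large-perturbation case \eqref{LargePerturbIndex} the normal operator at $\bhs{11,0}$ vanishes, the heat kernel \emph{is} trace class, and this term is absent---which is why no $\hat\eta(\eth^{H,\cK})$ appears there.)

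Second, the term $\int_{D/B}\hat A(D/B)\,\hat\eta(\eth^V_{\fD})$ does \emph{not} come from the transgression integral or from $t\to\infty$. It arises already in the $t\to0$ limit: the lifted diagonal in the $\fD$ heat space meets two faces at $t=0$, namely $\bhs{00,2}$ (which gives your bulk $\hat A\,\Ch'(E)$ via Getzler rescaling) and the additional face $\bhs{\phi\phi,2}$ over the fibre diagonal of $\wt\phi$. A second Getzler-type rescaling at $\bhs{\phi\phi,2}$---with respect to the smaller Clifford algebra $\Cl(T^*D^+/B)$---identifies the rescaled normal operator there as a harmonic oscillator in the $D^+/B$ directions tensored with $e^{-\tau\bbB_{\pa M/D,+}^2}$, and integrating the latter in $\tau$ is exactly what produces $\hat A(D/B)\,\hat\eta^+(\eth^V_{\fD})$. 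Your proposal treats the $t\to0$ limit as if the geometry were that of a closed manifold, which misses this face entirely. The ``refined asymptotic expansion at the corner'' and ``adiabatic matching of the two boundary scales'' you allude to in the last paragraph is precisely this double rescaling at $\bhs{00,2}$ and $\bhs{\phi\phi,2}$, but it feeds into the short-time limit, not the transgression.
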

Since we use the standard sign convention for eta forms instead of the sign 
convention of Vaillant, notice that we get an extra minus when we compare the
degree zero part of the formula with the formula of Vaillant.  Moreover, when $h$ is even,
the second term on the right hand side of \eqref{LargePerturbIndex},\eqref{SmallPerturbIndex} and \eqref{IntroUnperturbedIndex} differs from the corresponding term in the formula
of Vaillant \cite{Vaillant} by a factor of $-\sqrt{2\pi i}$.  This is because for $h$ even,
the eta form considered by Vaillant is $-\sqrt{2\pi i}$ times the standard eta form of
Bismut and Cheeger \cite{BC0}.  This extra factor in the definition of Vaillant is due in part
to the his unusual definition of the relative supertrace (see equation (111) in \cite{Vaillant}). 

Our proof follows the usual strategy, after \cite{Bismut}, used for families of Dirac-type operators.
That is, 
introducing the rescaled Bismut superconnection associated to the family of operators and the Levi-Civita connection of the metric, say $\bbA_t$, and comparing the behavior of its Chern character
\begin{equation*}
	\Ch\lrpar{\bbA_t} = \Str\lrpar{ e^{-\bbA_t^2} }
\end{equation*}
as the parameter $t$ goes to infinity and zero.
However, for a family of $\fD$ Dirac operators, the construction of the heat kernel is complicated by the singularity of the relevant superconnection $\bbA^2_t$ at the boundary. Furthermore, after the heat kernel is constructed it turns out not to be of trace class.
We deal with the first difficulty by using Vaillant's heat calculus adapted to these operators and with the second by introducing a renormalized Chern character.
Indeed our proof of Theorem \ref{DFamiliesIndex} follows the proof in \cite{MelrosePiazza} of the index of families of $b$-Dirac operators replacing the use of the $b$-calculus with Vaillant's treatment of $\fD$ operators (note that for technical reasons there is no $\fD$ calculus).

The paper is organized as follows.
In \S 2 we describe the geometry of families of $\fD$ metrics as a generalization of Vaillant's thesis which for this purpose is reviewed in \S\ref{sec:Vaillant}.
Section 3 introduces the $\fD$ Bismut superconnection and explains how, by a suitable rescaling, it contains the geometric data of the boundary fibrations. The proof of the $\fD$ families index theorem is in \S 4, followed by the derivation of the $\fC$ families index theorem in \S 5.

\begin{equation*}
	\text{ {\bf Acknowledgements} }
\end{equation*}
The authors are happy to acknowledge helpful conversations on these topics with Rafe Mazzeo, Richard Melrose, Sergiu Moroianu, Leon Takhtajan, and Peter Zograf.

\paperbody 
\section{Review of Vaillant's thesis} \label{sec:Vaillant}

Our paper relies heavily on the thesis of Boris Vaillant \cite{Vaillant}, who
proves among other things an index theorem for Dirac-type operators associated 
to fibred cusp hyperbolic metrics.  For this reason, we intend in this section
to review the main steps and constructions involved in the proof
of this theorem.

\subsection{Behavior near the cusps}\label{sec:BhvCusps} $ $ \newline
We start out with a manifold $X$ whose boundary is the total space of a fibration
\begin{equation*}
	Z - \pa X \xrightarrow{\phi} Y.
\end{equation*}
Let $x$ be a boundary defining function, i.e., a smooth non-negative function on $\bar X$ such that $\pa X = \{x=0\}$ and $dx$ is never equal to zero at the boundary.
A complication inherent in dealing with $\FD TX$ (which is defined below) is that its sections do not form a Lie algebra. For this reason, it is convenient to work with the closely related Lie algebra
\begin{equation*}
	\cV_{\fC}(X) := 
	\lrbrac{ V \in \Gamma\lrpar{TX} : V\rest{\pa X} \in 
      \Gamma\lrpar{T(\pa X/Y)}, Vx = \cO\lrpar{x^2} }
\end{equation*}
introduced by Mazzeo and Melrose \cite{MazzeoMelrose}, and define
\begin{equation*}
	\cV_{\fD}(X) :=
	\lrbrac{ \frac1x V : V \in \cV_{\fC} }.
\end{equation*}
Then the bundles $\FC TX$ and $\FD TX$ are defined by specifying their sections (cf. \cite[Chapter 8]{APS Book})
\begin{equation}\label{BdlesDef}
	\Gamma\lrpar{ \FC TX } = \cV_{\fC}(X), \quad
	\Gamma\lrpar{ \FD TX } = \cV_{\fD}(X).
\end{equation}
This is characteristic of the methods in \cite{Vaillant} and below, we study $\fD$ objects by relating them to the generally better behaved $\fC$ objects.

A fibre metric on the bundle $\FD TX$ is known as a {\bf $\fD$ metric}.
However, in practice, it will be necessary to restrict the behavior of the metric near the boundary.
The most restrictive condition is to fix a collar neighborhood of the boundary, 
$C\lrpar{\pa X}\cong [0,\eps)_x \times \pa X$, extend the fibration to $C\lrpar{\pa X}$
\begin{equation*}
	[0,\eps)_x \times \pa X \xrightarrow{\phi} [0,\eps)_x \times Y,
\end{equation*}
choose a connection for $\phi$, and demand that in this neighborhood the metric takes the form
\begin{equation}\label{ProductType}
	\frac{dx^2}{x^2} + \phi^*g_Y + x^2 g_{\pa X/Y}
\end{equation}
where $g_Y$ (a metric on $Y$) and $g_{\pa X/Y}$ (a symmetric two-tensor that restricts to a Riemannian metric on each fiber of $\phi$) are such that $\phi^*g_Y + g_{\pa X/Y}$ is a Riemannian submersion metric for $\pa X$. Such a metric is known as a {\bf product-type $\fD$ metric}. While these metrics are easy to deal with computationally, they are not preserved by all changes of coordinates in the interior.

Instead we work with a class of metrics that are asymptotically like \eqref{ProductType}.
An {\bf exact $\fD$ metric} is a fibre metric on $\FD TX$ that near the boundary takes the form
\begin{equation}\label{fDMetric}
	g_{\fD}= \frac{dx^2}{x^2} + h + x^2 k = g_{pt} + x\wt g
\end{equation}
where $g_{pt}$ is a product-type metric and $\wt g \in \CI(M, S^2\; \FD T^*M/B)$. 

{\bf Remark.} This definition of exact $\fD$ metric, consistent with Moroianu \cite{Moroianu}, is different from that used by Vaillant. However the differences are minor (Vaillant has $g_{\fD}(x\pa_x, x\pa_x) = 1 + \cO(x^2)$ and $g_{\fD}(x\pa_x, \frac1x\pa_z) = \cO(x^2)$ while Moroianu has respectively $1 + \cO(x)$ and $\cO(x)$), and do not affect the analyis below.

If the boundary fibration has $Z=\{pt\}$ and $\phi=\mathrm{id}$, then the $\fD$ objects (vector fields, metrics, etc.) are known as $b$ objects. The geometry and index theory of $b$-metrics was worked out by Melrose in \cite{APS Book} and the family index theorem by Melrose and Piazza \cite{MelrosePiazza, MelrosePiazza2}. We will occasionally make use of $b$-vector fields and metrics.

Recall that the geometry of a submersion metric, such as 
\begin{equation*}
	g_{\pa X}:=(h + k)\rest{\pa X},
\end{equation*}
is described by two tensors, the second fundamental form of the fibers $\Sec{\phi}$ and the curvature of the fibration $\Curv{\phi}$. Indeed, the metric determines a complement to the space $V\pa X$ of vertical vector fields, which we denote $H\pa X$, and these two tensors describe the failure of the Levi-Civita connection to preserve the splitting
\begin{equation}\label{BdySplitting}
	T\pa X = V\pa X \oplus H\pa X.
\end{equation}
In terms of the Levi-Civita connection $\nabla^{\pa X}$ of $g_{\pa X}$ the second fundamental form of the fibers is given by
\begin{equation}\label{DefSec}
\begin{gathered}
	\Sec{\phi} \in \CI\lrpar{\pa X; \lrpar{V^*\pa X}^2 \otimes H^*\pa X} \\
	\Sec{\phi}\lrpar{V_1,V_2}\lrpar H
	:= g_{\pa X}\lrpar{ \nabla^{\pa X}_{V_1}V_2, H}, 
	\quad V_i \in V\pa X, H\in H\pa X,
\end{gathered}
\end{equation}
and the curvature of the fibration by
\begin{equation}\label{DefCurv}
\begin{gathered}
	\Curv{\phi} \in \CI\lrpar{\pa X; \lrpar{H^*\pa X}^2 \otimes V^*\pa X} \\
	\Curv{\phi}\lrpar{H_1, H_2}\lrpar V
	:= g_{\pa X}\lrpar{ \lrspar{H_1, H_2}, V},
	\quad H_i \in H\pa X, V \in V\pa X.
\end{gathered}
\end{equation}
It is clear from their definitions that these tensors are respectively symmetric and anti-symmetric in their  first two arguments.
Conversely we can describe the Levi-Civita connection of $g_{\pa X}$ in terms of these two tensors, (see for instance \cite{HHM})
\begin{equation*}
\begin{tabular}{|c||c|c|} \hline 
$g_{\pa X}\lrpar{\nabla^{\pa X}_{X_1} X_2, Y}$ & $V_0$ & $H_0$ \\ \hline\hline
$\nabla^{\pa X}_{V_1}V_2$ & 
	$k\lrpar{\nabla^Z_{ V_1} V_2, V_0}$ & 
	$\Sec{\phi}\lrpar{ V_1, V_2}\lrpar{H_0}$ \\ \hline
$\nabla^{\pa X}_{H} V$ & 
	$k\lrpar{\lrspar{H, V}, V_0} - \Sec{\phi}\lrpar{ V,  V_0} \lrpar{H}$ &
	$-\frac12\Curv{\phi}\lrpar{H, H_0}\lrpar{V}$ \\ \hline
$\nabla^{\pa X}_{ V} H$ &
	$-\Sec{\phi}\lrpar{V, V_0}\lrpar{H}$ &
	$\frac12\Curv{\phi}\lrpar{H,H_0}\lrpar{V}$ \\ \hline
$\nabla^{\pa X}_{H_1}H_2$ &
	$\frac12\Curv{\phi}\lrpar{H_1,H_2}\lrpar{V_0}$ &
	$h\lrpar{\nabla^Y_{H_1}H_2, H_0}$ \\ \hline
\end{tabular}
\end{equation*}
where $V_i$ and $H_i$ are respectively vertical and horizontal vector fields.
We will often denote the ortohogonal projection onto the first summand of \eqref{BdySplitting} by $\v\phi$ and the complementary projection onto $H\pa X$ by $\h\phi$.

Understanding the asymptotics of a fibered hyperbolic cusp metric means understanding how the geometry of the fibration at the boundary, as encoded in $\Sec\phi$ and $\Curv\phi$, relates to the geometry of the metric $g_{\fD}$.
To study this problem we need to define horizontal and vertical vector fields in $\FD TX$. 
Consider the inclusion map $\FC TX \xrightarrow{j} TX$ restricted to the boundary
\begin{equation*}
	0 \to \FC N\pa X \to \FC TX\rest{\pa X} \xrightarrow{j} \FC V\pa X \to 0
\end{equation*}
and choose a splitting of this exact sequence, $\FC TX\rest{\pa X} = \FC N\pa X \oplus \FC V\pa X$.
For a product-type metric these are easily identified as $\FC N\pa X = \ang{ x^{2}\pa_x, x\pa_y }$, $\FC V\pa X = V\pa X$ and extended into $C(\pa X)$.
Note that although $\FC V\pa X$ can be identified with $V\pa X$, it is {\em a priori} a quotient and not a sub-bundle of $\FC TX\rest{\pa X}$.
Define the sub-bundles of asymptotically horizontal and asymptotically vertical vector fields, respectively by
\begin{gather*}
	\Gamma\lrpar{ \lrspar{ \FC N\pa X } }
	:= \lrbrac{ W \in \Gamma\lrpar{ \FC TX } : W\rest{\pa X} \in \Gamma\lrpar{ \FC N\pa X } } \\
	\Gamma\lrpar{ \lrspar{ \FC V\pa X } }
	:= \lrbrac{ W \in \Gamma\lrpar{ \FC TX } : W\rest{\pa X} \in \Gamma\lrpar{ \FC V\pa X } }
\end{gather*}
where the restriction to the boundary is as an element of $\FC TX$, and finally define the corresponding $\fD$ bundles
\begin{equation*}
	\Gamma\lrpar{ \lrspar{ \FD N\pa X } }
	= \frac1x \Gamma\lrpar{ \lrspar{ \FC N\pa X } }, \quad
	\Gamma\lrpar{ \lrspar{ \FD V\pa X } }
	= \frac1x \Gamma\lrpar{ \lrspar{ \FC V\pa X } }.
\end{equation*}

Given a $\fC$ vector field, $W$, the Kozul formula applied to an arbitrary $\fC$ metric defines a map
\begin{equation}\label{fCconnection}
	\nabla^{\fC}_W : \Gamma\lrpar{ \FC TX} \to \Gamma\lrpar{ \FC TX }
\end{equation}
which satisfies the Leibnitz rule. However this is not quite a connection since the vector field $W$ must itself be a $\fC$ vector field. 
One of the benefits of restricting to exact $\fC$ metrics is that then the map \eqref{fCconnection} is defined for any vector field $W$ and so defines an actual connection on $\FC TX$. The analogous statement is true for exact $\fD$ metrics.

\begin{lemma}[Vaillant \cite{Vaillant}, \S1]
The Levi-Civita connection of an exact $\fD$ metric is a connection on the $\fD$ tangent bundle
\begin{equation*}
	\nabla^{\fD} : \Gamma\lrpar{ \FD TX } \to 
	\Gamma\lrpar{ T^*X \otimes \FD TX }.
\end{equation*}
In contrast to $\nabla^{\pa X}$, this connection preserves asymptotically vertical and horizontal vector fields
\begin{gather*}
	\Gamma\lrpar{ \lrspar{ \FD N\pa X } }
	\xrightarrow{ \nabla^{\fD}_W }
	\Gamma\lrpar{ \lrspar{ \FD N\pa X } }, \label{PreserveHor}\\
	\Gamma\lrpar{ \lrspar{ \FD V\pa X } }
	\xrightarrow{ \nabla^{\fD}_W }
	\Gamma\lrpar{ \lrspar{ \FD V\pa X } }
\end{gather*}
whenever $W \in \Gamma( TX )$ is tangent to the boundary.
\end{lemma}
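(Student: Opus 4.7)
The strategy is to verify both assertions by explicit computation in an adapted local $\fD$-frame near the boundary: first handle the product-type case directly and then bootstrap to general exact $\fD$-metrics by a perturbation argument. In a collar $[0,\eps)_x \times \pa X$ with fibration coordinates $(x, y_i, z_j)$, take the local $\fD$-frame $\{x\pa_x, \pa_{y_i}, \frac{1}{x}\pa_{z_j}\}$ spanning $\FD TX$. A product-type metric \eqref{ProductType} is block diagonal in this frame with coefficients $1$, $h_{ik}(x,y)$, $k_{j\ell}(x,y,z)$, all smooth and non-degenerate up to $x=0$. Computing the ordinary-basis Christoffel symbols of $g_{pt}$ reveals $1/x$-singularities only in $\Gamma^x_{xx} = -1/x$ and in $\Gamma^k_{xj} = \delta^k_j/x + O(1)$ (with $j, k$ labeling $z$-coordinates), but these cancel precisely in the $\fD$-frame combinations
\begin{equation*}
	\nabla^{pt}_{\pa_x}(x\pa_x) = \pa_x + x\Gamma^x_{xx}\pa_x = 0, \qquad \nabla^{pt}_{\pa_x}\!\left(\frac{1}{x}\pa_{z_j}\right) = -\frac{1}{x^2}\pa_{z_j} + \frac{1}{x}\Gamma^k_{xj}\pa_{z_k} = O(1),
\end{equation*}
and analogously for $\nabla^{pt}_{\pa_{y_i}}$ and $\nabla^{pt}_{\pa_{z_j}}$. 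This shows $\nabla^{pt}_{\pa_\bullet}$ sends each $\fD$-frame vector to a smooth $\CI$-combination of the frame, and by $\CI$-linearity in the derivation slot $\nabla^{pt}$ is a connection on $\FD TX$. Inspecting these formulas at $x=0$ further shows that whenever $W \in \Gamma(TX)$ is tangent to $\pa X$, the $\fD$-horizontal subspace $\langle x\pa_x, \pa_{y_i}\rangle$ and $\fD$-vertical subspace $\langle \frac{1}{x}\pa_{z_j}\rangle$---which generate $\FD N\pa X$ and $\FD V\pa X$---are each preserved at the boundary by $\nabla^{pt}_W$, since the only potentially mixing terms carry an extra factor of $x$.

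For a general exact $\fD$-metric, write $g_{\fD} = g_{pt} + x\wt g$ with $\wt g \in \CI(X; S^2\,\FD T^*X)$ and set $\eta := g_{\fD} - g_{pt} = x\wt g$. The difference tensor $A := \nabla^{\fD} - \nabla^{pt}$ is determined algebraically by $\eta$ and $\nabla^{pt}\eta$ via
\begin{equation*}
	2 g_{\fD}(A(U,V), W) = (\nabla^{pt}_U\eta)(V,W) + (\nabla^{pt}_V\eta)(U,W) - (\nabla^{pt}_W\eta)(U,V).
\end{equation*}
Since $\wt g$ is $\fD$-smooth and $dx$ equals $x$ times the $\fD$-smooth $1$-form $dx/x$, the Leibnitz expansion $\nabla^{pt}\eta = dx\otimes\wt g + x\nabla^{pt}\wt g$ yields a smooth $\fD$-section of $T^*X\otimes S^2\,\FD T^*X$ that vanishes at $\pa X$ (using that $\nabla^{pt}$ is a $\fD$-connection by the preceding paragraph). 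Consequently $A \in \CI(X; T^*X\otimes\End(\FD TX))$ with $A\rest{\pa X} = 0$, so $\nabla^{\fD} = \nabla^{pt} + A$ is a connection on $\FD TX$ that coincides with $\nabla^{pt}$ along $\pa X$, and the subbundle preservation carries over.

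The technical heart of the argument is the cancellation of $1/x$-singularities in the product-type step: the ordinary Christoffels blow up because the metric is singular in the $(\pa_x, \pa_{z_j})$ directions, yet these divergent pieces conspire to cancel exactly in the specific combinations dictated by the $\fD$-frame. A parallel bit of book-keeping is needed in the perturbation step to confirm that the extra factor of $x$ in $\eta = x\wt g$ survives $\fD$-differentiation---which it does because $(x\pa_x)(x) = x$ while $\pa_{y_i}(x) = \frac{1}{x}\pa_{z_j}(x) = 0$, so the $\fD$-frame derivations preserve or improve the vanishing order at $\pa X$. These verifications are straightforward but require delicate attention to the singular structure.
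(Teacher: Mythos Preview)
The paper does not prove this lemma; it is quoted from Vaillant's thesis (\S1) without argument, so there is no in-paper proof to compare against. Your two-step strategy---explicit computation for the product-type metric in a $\fD$-frame, followed by a difference-tensor perturbation to the general exact case---is a sound and standard route, and the product-type cancellations you describe are correct.

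There is, however, an error in the perturbation step. You claim that $\nabla^{pt}\eta = dx\otimes\wt g + x\nabla^{pt}\wt g$ vanishes at $\pa X$ as a section of $T^*X\otimes S^2\,\FD T^*X$, and hence that $A\rest{\pa X}=0$. This is false: $dx$ is a nowhere-vanishing section of $T^*X$, so $dx\otimes\wt g$ does not vanish at the boundary in this bundle. Your remark that $dx = x\cdot\tfrac{dx}{x}$ would be relevant if the first tensor factor were $\FD T^*X$, but you (correctly, for the lemma) placed it in $T^*X$. Concretely, $g_{\fD}(A(\pa_x,V),W')\rest{\pa X}$ picks up the term $\wt g(V,W')\rest{\pa X}$, which is generically nonzero.

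Fortunately this does not break the argument, because the subbundle-preservation claim only concerns $W$ tangent to $\pa X$, and for such $W$ one has $dx(W)\rest{\pa X}=0$. A corrected version observes that when the direction slot is $W$ tangent to the boundary, all three terms in your difference formula vanish at $x=0$: the first because $(Wx)\rest{\pa X}=0$, and the other two because $Vx,\,W'x\in x\,\CI$ for $V,W'\in\Gamma(\FD TX)$ while the pairings $\wt g(W,\cdot)$ remain bounded (a vector field tangent to $\pa X$ is automatically a smooth $\FD$-section). Thus $A(W,\cdot)\rest{\pa X}=0$ for $W$ tangent, and the preservation carries over from $\nabla^{pt}$ as you intended. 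Your smoothness claim $A\in\CI(X;T^*X\otimes\End(\FD TX))$ is correct and suffices for the first assertion of the lemma; only the overreaching vanishing statement needs to be weakened to the tangent case.
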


We can be more precise about the asymptotic behavior of $\nabla^{\fD}$. 
Fix a collar neighborhood of the boundary, $C(\pa X)$, in which the metric has the form \eqref{fDMetric}.
Corresponding to the choice of product structure there is a natural extension of any vector field on the boundary to a vector field on $C(\pa X)$.
Define the $\fD$ second fundamental form of the fibers by
\begin{gather}
	\Sec{\fD} \in \CI\lrpar{ \pa X; \lrpar{V^*\pa X}^2 \otimes (\FD N\pa X)^* } 
	,\quad V_i \in V\pa X, H\in \FD N\pa X, \notag \\
	\Sec{\fD}\lrpar{V_1,V_2}\lrpar H
	:= g_{\fD}\lrpar{ \nabla^{\fD}_{\tfrac 1x V_1} \tfrac1x V_2, H}\rest{\pa X},
\label{DefFDSec}
\end{gather}
and the $\fD$ curvature of the fibration by
\begin{gather}
	\Curv{\fD} \in \CI\lrpar{\pa X; \lrpar{(N\pa X)^*}^2 \otimes V^*\pa X},
	\quad H_i \in \FD N\pa X, V \in V\pa X \notag \\
	\Curv{\fD}\lrpar{H_1, H_2}\lrpar V
	:= g_{\fD}\lrpar{ \lrspar{H_1, H_2}, \tfrac1x V}\rest{\pa X}.
\label{DefFDCurv}
\end{gather}
It is easy to relate these tensors with \eqref{DefSec} and \eqref{DefCurv}. Indeed, we have
\begin{equation}\label{TwoSecs}
\begin{gathered}
	\Sec{\fD}\lrpar{V_1, V_2}\lrpar H
	= \Sec{\phi}\lrpar{V_1, V_2}\lrpar H - \tfrac{dx}x  (H)  k(V_1, V_2) \\
	+
	\frac12 \left[
	V_1 \wt g(H, \tfrac1x  V_2)
	+V_2 \wt g(H, \tfrac1x  V_1)
	+ \wt g(H, \tfrac1x  [V_1,V_2]) \right]\rest{\pa X}
\end{gathered}
\end{equation}
and
\begin{equation}\label{CurvfD=x}
	\Curv{\fD}\lrpar{H_1, H_2}\lrpar V
	= x\Curv{\phi}\lrpar{H_1, H_2}\lrpar V.
\end{equation}
Notice that the last three summands in \eqref{TwoSecs} vanish if the metric is a product-type metric up to second order at the boundary.

Another point of view is adopted in \cite[\S 1.4]{Vaillant}. 
Instead of using a fixed product structure on a collar neighborhood of the boundary, pick a vector field $N$ transverse to the boundary and use it to extend $\fC$ vector fields off of the boundary as follows: given $A \in \FC TX$ define $\bar A \in \FC TX$ so that 
\begin{equation*}
\begin{cases}
\nabla^{\fD}_N \frac1x\bar A=0 \\
\bar A\rest{\pa X} = A\rest{\pa X} \text{ as elements of } \FC TX.
\end{cases}
\end{equation*}
For the asymptotics of the connection this is essentially what we did in the previous paragraph, and it allows us to identify the last three summands in \eqref{TwoSecs} with the tensor
\begin{equation}\label{DefFDB}
	\cB_{\fD}\lrpar{V_1, V_2}\lrpar H
	:=\frac12 g_{\fD}\lrpar{ \tfrac1{x^2}\lrpar{ [\bar V_1, \bar V_2] - \bar{ [ V_1,  V_2] } }, \tfrac1x \bar{xH} },
\end{equation}
i.e., we have \cite[Lemma 1.13]{Vaillant}
\begin{multline}\label{TwoSecs'}
	\Sec{\fD}\lrpar{V_1, V_2}\lrpar H \\
	= \Sec{\phi}\lrpar{V_1, V_2}\lrpar H - \tfrac{dx}x(H)  k(V_1, V_2)
	+ \cB_{\fD}\lrpar{ V_1, V_2}(H).
\end{multline}
The trick of passing from $A$ to $\bar A$ is particularly useful for computing the asymptotics of the curvature $R^{\fD}$ of $g_{\fD}$.

\begin{lemma}[Vaillant \cite{Vaillant}, Proposition 1.14] \label{ConnectionAsymp}
Let $N \in \Gamma(TX)$ be transverse to the boundary and let $W \in \Gamma(\FC TX)$.
Then, at the boundary, if $A \in \Gamma( [\FD V\pa X] )$ and $B \in \Gamma( [\FD N\pa X] )$
\begin{equation}\label{Prop1.14a}
	g_{\fD}\lrpar{ R^{\fD}\lrpar{N,W}A, B}\rest{\pa X}
	= dx(N) \Sec{\fD}\lrpar{ \v{\fC} W, xA}(B).
\end{equation}
If $A, B \in \Gamma( [\FD N\pa X] )$ then
$g_{\fD}\lrpar{ R^{\fD}\lrpar{N,W}A, B} = \cO(x)$, and 
\begin{multline}\label{Prop1.14b}
	g_{\fD}\lrpar{ (\nabla_N^{\fD} R^{\fD}) \lrpar{N,W}A, B}\rest{\pa X} \\
	=\lrpar{ dx(N) }^2 \Curv{\phi}\lrpar{A, B}( \v{\fC}W ).
\end{multline}
\end{lemma}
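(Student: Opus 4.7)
The plan is to follow Vaillant's strategy of reducing the curvature asymptotics at the boundary to boundary data via carefully chosen $N$-parallel extensions of the vector fields. Given boundary values of $W, A, B$, I would use the bar-extension trick introduced just before \eqref{DefFDB}: for each $X \in \FC TX$ at $\pa X$, let $\bar X \in \FC TX$ denote the extension satisfying $\nabla^{\fD}_N(\tfrac1x \bar X) = 0$ with the prescribed boundary value. Writing $\tilde A = \tfrac1x \overline{xA}$, $\tilde B = \tfrac1x \overline{xB}$, and analogously $\tilde W$, each $\tilde X$ is a $\fD$ vector field that is $N$-parallel with respect to $\nabla^{\fD}$ and restricts correctly at $\pa X$. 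This parallelism collapses most of the terms appearing in the expansion of $R^{\fD}$.

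For \eqref{Prop1.14a}, I would write
\begin{equation*}
R^{\fD}(N, \tilde W)\tilde A = \nabla^{\fD}_N \nabla^{\fD}_{\tilde W}\tilde A - \nabla^{\fD}_{\tilde W}\nabla^{\fD}_N \tilde A - \nabla^{\fD}_{[N,\tilde W]}\tilde A.
\end{equation*}
The middle term vanishes by $N$-parallelism, and since $\tilde W$ is also $N$-parallel the bracket $[N, \tilde W]$ is tangent to $\pa X$, so its contribution is controlled by boundary data. Pairing with $\tilde B$ (also $N$-parallel) and applying the Leibniz rule,
\begin{equation*}
g_{\fD}(R^{\fD}(N,\tilde W)\tilde A, \tilde B)|_{\pa X} = N\bigl[g_{\fD}(\nabla^{\fD}_{\tilde W}\tilde A, \tilde B)\bigr]\big|_{\pa X} - g_{\fD}(\nabla^{\fD}_{[N,\tilde W]}\tilde A, \tilde B)\big|_{\pa X}.
\end{equation*}
Expanding $g_{\fD}(\nabla^{\fD}_{\tilde W}\tilde A, \tilde B)$ via the Koszul formula for the exact $\fD$ metric and invoking the definition \eqref{DefFDSec} together with the asymptotics \eqref{TwoSecs'}, this quantity vanishes at $\pa X$ to leading order in $x$, and its $N$-derivative produces precisely $dx(N)\,\Sec{\fD}(\v{\fC} W, xA)(B)$. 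Only the vertical part of $W$ survives because the previous Lemma shows $\nabla^{\fD}$ preserves the asymptotic vertical/horizontal splitting, so a horizontal $W$ cannot mix vertical $A$ with horizontal $B$ at leading order.

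For \eqref{Prop1.14b}, the intermediate $O(x)$ bound comes from the same Koszul expansion with $\tilde A, \tilde B \in [\FD N\pa X]$: the only mixing between asymptotically horizontal fields is governed by $\Curv{\fD}$, which by \eqref{CurvfD=x} equals $x\Curv{\phi}$ and hence vanishes at $\pa X$. Differentiating once more,
\begin{equation*}
(\nabla^{\fD}_N R^{\fD})(N,\tilde W)\tilde A = \nabla^{\fD}_N [R^{\fD}(N,\tilde W)\tilde A] - R^{\fD}(\nabla^{\fD}_N N, \tilde W)\tilde A,
\end{equation*}
the remaining correction terms vanishing by parallelism of $\tilde W$ and $\tilde A$. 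The extra $N$-derivative consumes the factor of $x$ in $\Curv{\fD} = x\Curv\phi$, yielding the claimed $(dx(N))^2\,\Curv{\phi}(A,B)(\v{\fC} W)$ leading term, while the correction $R^{\fD}(\nabla^{\fD}_N N, \tilde W)\tilde A$ is of a form already controlled by part (a) and contributes no net term at the boundary. The main obstacle is careful bookkeeping of orders in $x$ under the $\fC/\fD$ rescalings: because $\Curv\phi$ sits at one higher order in $x$ than $\Sec\phi$ in the $\fD$-geometry, a single transverse derivative is insufficient to see it, and one must verify that the subleading contributions from the Koszul formula, together with the $\cB_{\fD}$ corrections defined in \eqref{DefFDB}, combine to give exactly the stated expressions and nothing more.
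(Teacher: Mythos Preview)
Your proposal is essentially the same approach as the paper's (and Vaillant's): use the $N$-parallel bar-extensions $\tfrac1x\overline{xA}$, $\tfrac1x\overline{xB}$ to kill the $\nabla^{\fD}_N$ terms in the curvature, reduce to $N$ applied to $g_{\fD}(\nabla^{\fD}_{\bar W}\tilde A,\tilde B)=O(x)$, and then read off $\Sec{\fD}$ (respectively $\Curv_{\fD}=x\Curv_\phi$) from the leading coefficient. The paper's own argument is a four-line sketch of exactly this computation and defers the details of \eqref{Prop1.14b} to Vaillant.

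One small caution: your justification that ``since $\tilde W$ is $N$-parallel the bracket $[N,\tilde W]$ is tangent to $\pa X$'' is not quite the right reason. Parallelism for $\nabla^{\fD}$ does not by itself force the Lie bracket to be tangent. What actually makes the bracket term harmless is that $\bar W\in\cV_{\fC}$ satisfies $\bar Wx=O(x^2)$, so $[N,\bar W]$ is automatically tangent to $\pa X$ regardless of parallelism; then the splitting-preservation lemma you cite does the rest. This is a cosmetic fix and does not affect the structure of your argument.
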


We do note give expressions for the other asymptotics of the curvature, as these are the ones we will need.

\begin{proof}
Notice how passing from $A$ to $\bar A$ (and so on) reduces the problem to the asymptotics of the connection: 
\begin{gather*}
	g_{\fD}\lrpar{ R^{\fD}\lrpar{N,W}A, B}\rest{\pa X}
	=
	g_{\fD}\lrpar{ R^{\fD}\lrpar{N,\bar W} \frac1x \bar{xA}, \frac1x \bar{xB}}\rest{\pa X} \\
	=
	N g_{\fD}\lrpar{ \nabla^{\fD}_{\bar W} \frac1x \bar{xA}, \frac1x\bar{xB}}\rest{\pa X}
	=
	N(x) g_{\fD}\lrpar{ \nabla^{\fD}_{\frac1x \bar W} \frac1x \bar{xA}, \frac1x\bar{xB}}\rest{\pa X}\\
	= 
	dx(N)\Sec{\fD} \lrpar{\v{\fD} W, xA}(B)
\end{gather*}
which proves both \eqref{Prop1.14a} and that $g_{\fD}\lrpar{ R^{\fD}\lrpar{N,W}A, B} = \cO(x)$ whenever $A, B \in \Gamma( [\FD N\pa X] )$. Vaillant's proof of \eqref{Prop1.14b} is through a similar though more involved computation.
\end{proof}

\subsection{Fredholm criteria for Dirac-type operators} \label{sec:DiracTypeOps} $ $ \newline
A {\bf $\fC$ differential operator} of order $k$ is an element of the enveloping algebra of $\cV_{\fC}$, that is, a differential operator that in local coordinates near the boundary has an expression of the form
\begin{equation*}
	\sum_{j+|\alpha|+|\beta| \leq k} a_{j,\alpha,\beta}(x,y,z) (x^2\pa_x)^j (x\pa_y)^\alpha (\pa_z)^\beta.
\end{equation*}
A differential operator acting between sections of two vector bundles $E$ and $F$ has locally the same form but with $a_{j,\alpha,\beta}$ valued in the homomorphism bundle between $E$ and $F$.
A differential operator, $P$, is said to be a {\bf $\fD$ differential operator} of order $k$ if $x^kP$ is a $\fC$ differential operator of order $k$.

A {\bf $\fD$ Clifford module}, $E \to X$, is a Hermitian vector bundle over $X$ with a non-trivial smooth orthogonal fibre action of 
\begin{equation*}
	\Cl_{\fD}\lrpar X := \Cl\lrpar{\FD T^*X},
\end{equation*}
which we will denote $\cll$, and a unitary Hermitian connection, $\nabla^E$, which distributes over the Clifford action compatibly with the connection $\nabla^{\fD}$ on $\FD TX$.
We also assume that $X$ is even dimensional so that $\Cl_{\fD}(X)$ is $\bbZ/2$-graded and that $E$ has a compatible $\bbZ/2$-grading
\begin{equation*}
	E = E^+ \oplus E^- .
\end{equation*}

This information determines an associated {\bf $\fD$ Dirac-type operator} by
\begin{equation*}
	D_{\fD}:\CI\lrpar{X;E} \xrightarrow{\nabla^E} \CI\lrpar{X;\FD T^*X\otimes E}
	\xrightarrow{\cll} \CI\lrpar{X;E}.
\end{equation*}
This is an element of $\Diff^1_{\fD}\lrpar{M,E}$, i.e., $x\eth_{\fD}$ is a $\fC$ differential operator of order one.
The operator $D_{\fD}$ acting on $L^2_{\fD}(X,E)$ is unitarily equivalent to the operator
\begin{equation*}
	\eth_{\fD} = D_{\fD} - \tfrac v2 \cl{\tfrac{dx}x}
\end{equation*} 
acting on the corresponding $b$ space, $L^2_b(X,E) = x^{-v/2}L^2_{\fD}(X,E)$ (in particular, they have the same index). 
It is convenient to work on this space in order to make use of the usual constructions.

The operators $\eth_{\fD}$ are always elliptic, whether or not they are Fredholm is determined by two model operators. The first of these is a family of Dirac operators on the fibers of the boundary fibration $\phi$. This operator, denoted $\eth^V_{\fD}$, is obtained by restricting $x\eth_{\fD}$ to the boundary,
\begin{equation*}
	\eth^V_{\fD} := x\eth_{\fD}\rest{\pa X} 
	\in \Diff^1\lrpar{ \pa X/Y; E}.
\end{equation*}
The dimension of the null spaces of these operators will generally vary with the base point in $Y$. If it does not, we say that $\eth_{\fD}$ satisfies the {\bf constant rank assumption}. Vaillant makes this assumption and denotes by $\cK$ the vector bundle over $Y$ whose fibers are these null spaces.

It is easy to see - using the characterization of Fredholm operators in the $\fC$ calculus from \cite{MazzeoMelrose} - that $x\eth_{\fD}$ is Fredholm as a $\fC$ operator if and only if $\eth^V_{\fD}$ is invertible and we will refer to this possibility as the {\bf invertible assumption}. However, it is possible for $\eth_{\fD}$ to be Fredholm even though $x\eth_{\fD}$ is not. Under the constant rank assumption there is a second model operator, which we will denote $\eth^{H,\cK}_{\fD}$, that determines whether or not $\eth_{\fD}$ is Fredholm. This operator is obtained by restricting $\eth_{\fD}$ to sections of $\cK$ over the boundary, i.e,
\begin{equation*}
	\eth^{H,\cK}_{\fD} := \eth_{\fD}\rest{\pa X, \cK}
	\in \Diff^1\lrpar{ Y; \cK }.
\end{equation*}

\begin{theorem}[Vaillant \cite{Vaillant}, \S3]
Under the constant rank assumption, the operator $\eth_{\fD}$ is Fredholm as an operator on $L^2_{\fD}(X;E)$ if and only if $\eth^{H,\cK}_{\fD}$ is invertible.
\end{theorem}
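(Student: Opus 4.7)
The plan is to decompose the operator using the spectral decomposition of the boundary family $\eth^V_{\fD}$ provided by the constant rank assumption, and then to analyze the two resulting pieces by different standard techniques. Let $\Pi_{\cK}$ denote the fiberwise orthogonal projection of sections of $E$ over $\pa X$ onto $\cK = \ker \eth^V_{\fD}$, extended into a collar of the boundary using the product structure of an exact $\fD$ metric, and let $\Pi^\perp_{\cK} = \Id - \Pi_{\cK}$. These projections commute with $\eth_{\fD}$ only to leading order at the boundary; the off-diagonal coupling terms are lower order at $x=0$ thanks to the asymptotics of $\nabla^{\fD}$ recorded in Lemma \ref{ConnectionAsymp}.

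On the range of $\Pi^\perp_{\cK}$ the restriction of $\eth^V_{\fD}$ is invertible, so $x\eth_{\fD}\Pi^\perp_{\cK}$ is a fully elliptic element of the $\fC$ calculus of Mazzeo and Melrose. By the Fredholm criterion of \cite{MazzeoMelrose}, it admits a parametrix modulo $\fC$ smoothing operators whose errors are compact on the appropriate $L^2$ space, so this piece contributes a Fredholm operator regardless of the behavior of $\eth^{H,\cK}_{\fD}$.

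On the range of $\Pi_{\cK}$ the fiber operator vanishes identically, so the indicial behavior of $\eth_{\fD}\Pi_{\cK}$ at $x=0$ is not controlled by $\eth^V_{\fD}$ but by a $b$-type Dirac operator on a collar $[0,\eps)_x \times Y$ with coefficients in the pullback of $\cK$. A direct computation using the connection table of \S\ref{sec:BhvCusps} together with the unitary equivalence $D_{\fD} \leftrightarrow \eth_{\fD} + \tfrac{v}{2}\cl{\tfrac{dx}{x}}$ identifies the indicial family of this $b$-operator at $x=0$ with $\eth^{H,\cK}_{\fD}$, up to a Clifford twist by $\cl{\tfrac{dx}{x}}$. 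By Melrose's $b$-calculus Fredholm criterion \cite{APS Book}, this piece is Fredholm on $L^2_b$ precisely when its indicial family has trivial $L^2$ kernel on the critical weight line, which is exactly the condition that $\eth^{H,\cK}_{\fD}$ be invertible.

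To combine the two pieces I would build a Schur-complement parametrix for the full operator $\eth_{\fD}$, using the two partial parametrices constructed above and treating the off-diagonal entries, which vanish at $x=0$ by Lemma \ref{ConnectionAsymp}, as perturbations. Since the $\Pi^\perp_{\cK}$-piece is automatically Fredholm, the parametrix for the full operator will have compact error on $L^2_{\fD}(X;E)$ if and only if the $\Pi_{\cK}$-piece does, which by the previous step happens exactly when $\eth^{H,\cK}_{\fD}$ is invertible. The main technical obstacle I anticipate is verifying that the subleading off-diagonal coupling between $\Pi_{\cK}$ and $\Pi^\perp_{\cK}$ is genuinely of lower order in the mixed $\fC$/$b$ calculus so that the Schur complement argument actually closes, and that the parametrix constructed from two different model calculi composes to give a global parametrix on $X$; this is the substance of Vaillant's analysis in \cite{Vaillant}, and the rest of the argument is bookkeeping.
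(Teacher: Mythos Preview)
The paper does not give its own proof of this statement: it appears in the review section \S\ref{sec:DiracTypeOps} and is simply attributed to Vaillant, with the one-sentence summary that he ``constructs a parametrix for $\eth_{\fD}$ and shows that it can be meromorphically continued to a Riemann surface determined by the spectrum of $\eth^{H,\cK}_{\fD}$.'' So the relevant comparison is between your proposal and Vaillant's method as described there.

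Your route is genuinely different. Vaillant builds the resolvent directly and reads the Fredholm condition off its meromorphic structure; this buys him considerably more than the Fredholm criterion (description of the spectral measure at zero, extended $L^2$ index, etc.), which the paper later uses. Your approach instead splits along $\cK \oplus \cK^\perp$, invokes the Mazzeo--Melrose $\fC$ Fredholm criterion on the $\cK^\perp$ piece and the Melrose $b$-criterion on the $\cK$ piece, and glues with a Schur complement. This is closer in spirit to an adiabatic or block-diagonalization argument and would in principle yield the Fredholm statement without the full resolvent analysis.

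Two points deserve care. First, your identification of the indicial family on the $\cK$ piece is imprecise: the $b$-indicial family is a family in a Mellin parameter $\lambda$, concretely $i\lambda\,\cl{\tfrac{dx}{x}} + \eth^{H,\cK}_{\fD}$ (cf.\ the model at $\bhs{11,0}$ in Proposition \ref{prop:HeatKernels}), and you need the anticommutation of $\cl{\tfrac{dx}{x}}$ with $\eth^{H,\cK}_{\fD}$ to reduce invertibility for all real $\lambda$ to invertibility at $\lambda=0$. Second, and more seriously, the Schur complement step is the entire difficulty: the $\fC$-parametrix on $\cK^\perp$ and the $b$-parametrix on $\cK$ live in different calculi with different mapping and compactness properties on $L^2_{\fD}$, and the off-diagonal coupling, while vanishing at $x=0$, is not obviously small in a norm that lets you absorb it. Making this precise is exactly the content of Vaillant's parametrix construction, so your sketch correctly locates but does not circumvent the hard analysis.
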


Vaillant proves this theorem (and a lot more besides) by constructing a parametrix for $\eth_{\fD}$ and showing that it can be meromorphically continued to a Riemann surface determined by the spectrum of $\eth^{H,\cK}_{\fD}$. The construction uses the constant rank assumption and allows Vaillant to describe the spectral measure at zero in the non-Fredholm case.

Before passing to the next section we recall Vaillant's description of $\eth^{H,\cK}_{\fD}$ and its relation to a natural Dirac operator on $\cK$ (see \cite[$\S$3.5]{Vaillant}).
Since the following computation takes place at the boundary, we can assume without loss of generality that the Dirac operator is represented by
\begin{equation*}
	\eth_{\fD} = \cl{\bar\theta^i} \nabla^E_{\bar X_i}
\end{equation*}
where the $\fD$ vector fields $\bar X_i$ satisfy $\nabla_{dx^\flat}^E \bar X_i = 0$ with $dx^{\flat}$ the vector field dual to $dx$. 
Then we have
\begin{equation}\label{HorizontalComp}
\begin{split}
	\eth^{H,\cK}_{\fD}
	&= \cP_\cK \eth_{\fD}
	= \cP_\cK \nabla_{dx^\flat}^E (x\eth_{\fD}) 
	= \cP_\cK [\nabla_{dx^\flat}^E, x\eth_{\fD}] \\
	&= \cP_\cK \cl{\nabla_{dx^\flat} \bar \theta^i} \nabla_{x\bar X_i}^E
	+ \cP_\cK \cl{\bar\theta^i} [\nabla_{dx^\flat}^E, \nabla_{x\bar X_i}^E] \\
	&= \cP_\cK \cl{\bar\theta^i} \lrspar{ K_E(dx^\flat, x\bar X_i) + \nabla^E_{[dx^\flat, x\bar X_i]} } \\
	&= \cP_\cK \cl{\bar\theta^i} 
	\lrspar{ K_E(dx^\flat, x\bar X_i) + \nabla^E_{\bar X_i - x\nabla_{\bar X_i}^{\fD} dx^\flat} },
\end{split}
\end{equation}
and we will denote $\bar X_i - x\nabla_{\bar X_i}^{\fD} dx^\flat$ by $- x\nabla_{\bar X_i}^{\fC} dx^\flat$ (Vaillant points out that this equals $\bar X_i$ if $X_i \in [\FD N\pa X]$ and $dx(X_i)=0$). It is convenient to reinterpret this result after introducing notation to organize the geometry.

First recall that, since $E$ is a Clifford module, its curvature $K_E$ decomposes with respect to the action of Clifford multiplication into
\begin{equation}\label{CurvSplitting}
	K_E(\cdot, \cdot)
	= \frac12 \cl{R^{\fD}(\cdot,\cdot)}
	+ K_E'(\cdot, \cdot)
\end{equation}
where $K_E'$, known as the twisting curvature of $E$, takes values in the homomorphisms of $E$ that commute with Clifford multiplication.

The $\fD$ metric $g_X$ induces a $b$-metric, $g_{Y_+}$ on $Y_+ := Y \times \bbR^+_x$ by $\frac{dx^2}{x^2} + \phi^*g_Y$ in the notation of \eqref{ProductType}. The bundle $\cK$ pulls-back from $Y$ to a bundle over $Y^+$, it inherits a Hermitian metric from the one on $E$ by
\begin{equation}\label{MetricK}
	\ang{\xi,\eta}_\cK := \int_{\pa X/Y} \ang{\xi,\eta}_E \; \dvol_{\pa X/Y}
\end{equation}
where the integral stands for push-forward along $\phi$, and $\dvol_{\pa X/Y}$ is induced by the $\fC$ metric on the fibers $x^{-2}g_X$, and it inherits a Clifford action by projecting the Clifford action on $E$, $\cll_{\cK}:=\cP_\cK \h\phi \cll$.
To define a connection $\nabla^{\cK}$ compatible with $\ang{\cdot,\cdot}_{\cK}$, notice that for a vector field $U$ on $Y^+$ we have
\begin{equation*}
\begin{split}
	U\lrpar{\dvol_{\pa X/Y}}
	&=U \lrpar{\sqrt{\det g_{\pa X/Y}}} \theta^1\wedge\cdots\wedge\theta^v 
	\\ &= \frac12 \tr\lrpar{ g_{\pa X/Y}^{-1} (Ug_{\pa X/Y}) } \dvol_{\pa X/Y}
	\\ &= \sum_{V_i \in V\lrpar{\pa X/Y}} \ang{\nabla^{\pa X/Y}_UV_i,V_i} \dvol_{\pa X/Y} 
	\\ &= \tr\lrpar{ S_{\wt\phi}\lrpar{\cdot,\cdot}\lrpar U } \dvol_{\pa X/Y}
\end{split}
\end{equation*}
whence we define
\begin{equation}\label{NablaK}
	\nabla^{\cK}_U:=\cP_\cK\lrpar{\nabla^E_{\wt\phi^*U} 
		+ \frac12 \tr\lrpar{ S_{\wt\phi}\lrpar{\cdot,\cdot}\lrpar{\wt\phi^*U} }},
\end{equation}
and obtain a unitary connection that makes $\cK$ a Clifford module (cf. \cite[(4.24)]{BC0}).
If both the family of metrics $g_X$ and the family of metrics on $E$ are of product-type near the boundary then the computation above for $\eth^{H,\cK}_{\fD}$ collapses to $\eth^{\cK}:=\cll_{\cK}(\bar\theta^i) \nabla^{\cK}_{\bar X_i}$ with $X_i$ $\phi$-horizontal, i.e., to the Dirac-type operator on $\cK$. However in general there is an extra endomorphism on $\cK$, $\cJ$, such that \cite[Prop. 3.15]{Vaillant}
\begin{equation}\label{HorizontalNorm}
	\eth^{H,\cK}_{\fD} = \cll_\cK \circ \nabla^{\cK} + \cJ
\end{equation}
where $\cJ$ is given by 
\begin{multline*}
	\cJ := 
	\cP_\cK\left(
	\v\phi\cl{K_E'\lrpar{dx^\flat,\cdot}} 
	+\frac12\v\phi \cll_{\fD}\lrpar{\Ric^V\lrpar{dx^\flat,\cdot}}
	\right. \\ \left.
	+\frac14\cll_{\fD}\lrpar{ \cB_{\fD}\lrpar{x\cdot,x\cdot}\lrpar{\cdot}} 
	-{\v\phi\cl{\bar\theta^i}}\nabla^E_{x\nabla^{\fC}_{\bar X_i} dx^\flat}
	\right)
\end{multline*}
with $\Ric^V$ the `vertical' Ricci curvature (i.e., contraction of the curvature by vertical vector fields) and $\cB_{\fD}$ defined above \eqref{DefFDB}. (This follows from \eqref{HorizontalComp} and \eqref{CurvSplitting} by applying Lemma \ref{ConnectionAsymp}.) Hence $\eth^{H,\cK}_{\fD}$ is a zero${}^{\text{th}}$ order perturbation of the Dirac-type $b$-operator, $\eth^{\cK}$.

\subsection{The heat kernel of a $\fD$ Dirac-type operator} \label{sec:HeatKerOne}

Let $\eth_{\fD}$ be a $\fD$ Dirac-type operator acting on the bundle $E$ and satisfying the constant rank assumption.
As the first step in the proof of the index theorem for $\eth_{\fD}$, Vaillant constructs its heat kernel, $e^{-t\eth_{\fD}^2}$, by which we mean the solution to the equation
\begin{equation}\label{HeatEq}
	\begin{cases} ( \pa_t + \eth^2_{\fD} )H_t = 0 \\ H_{t=0} =\Id \end{cases}.
\end{equation}

The integral kernel of the heat kernel naturally lives as a distribution on the {\bf $\fD$ heat space} which we denote $HX_{\fD}$. This is a space obtained from $X^2 \times \bbR^+$ by a sequence of blow-ups. Let us motivate the structure of this space by considering how one would construct the heat kernel.

First, away from the boundary of the manifold, the heat kernel should resemble the heat kernel of the Euclidean space, 
\begin{equation*}
	\frac1{ (2\pi i)^{n/2} } e^{- \frac{|\zeta-\zeta'|^2}{2t} },
\end{equation*}
so clearly the interesting behavior occurs when $t\to0$ along the diagonal. To capture this we introduce polar coordinates along $\{\zeta = \zeta', t=0\}$ parabolic with respect to $t$ (i.e., $t$ should scale like $\zeta^2$), for instance we could take
\begin{equation*}
	\rho := \lrspar{ |\zeta - \zeta'|^4 + t^2}^{1/4}, \quad
	\hat\zeta := \frac\zeta\rho, \quad
	\hat\zeta' := \frac{\zeta'}\rho, \quad
	\hat t := \frac t{\rho^2}.
\end{equation*}
Geometrically this corresponds to starting with the manifold $X^2 \times \bbR^+$ and replacing the submanifold $\{\zeta = \zeta', t=0\}$ with a new boundary face (which we will denote $\bhs{00,2}$)
 on which $\rho, \hat\zeta, \hat\zeta', \hat t$ are local coordinates. 
The resulting manifold is denoted
\begin{equation*}
	\lrspar{ X^2 \times \bbR^+; \{\zeta=\zeta', t=0 \}, dt}
\end{equation*}
where the last `$dt$' records the parabolic direction (see \cite{DaiMelrose}, \cite{APS Book} for details) and comes with a natural blow-down map back to the original manifold,
\begin{equation*}
	\lrspar{ X^2 \times \bbR^+; \{\zeta=\zeta', t=0 \}, dt}
	\xrightarrow{\beta} X^2 \times \bbR^+ .
\end{equation*}
The operator $t\pa_t + t\eth^2_{\fD}$ lifts to the blown-up space where it is tangent to the new face (this is why we have multiplied the operator in \eqref{HeatEq} by $t$). Because of this, it determines a `model problem' at this face whose solution is precisely the Euclidean heat kernel (frozen at $t=1$ as in \cite[Chapter 7]{APS Book}).

Knowing the solution away from the boundary at time $t=0$, consider next the behavior of the operator $t\pa_t + t\eth^2_{\fD}$ as we approach the boundary. Because of the degeneracy of the metric at the boundary in the fiber directions,
the operator $\eth^2_{\fD}$ has a singularity of order two at the boundary. To understand what is happening near the submanifold
\begin{equation*}
	\diag_\phi \times \{0\} = \{ x=x'=0, y=y', t=0 \}
\end{equation*}
we again introduce polar coordinates around this submanifold, e.g., a rescaled time variable $\tau = t/x^2$. Geometrically, we blow-up $\diag_\phi \times \{0\}$ and lift $t\pa_t + t\eth^2_{\fD}$ to an operator on the resulting space. 
This space has a new boundary face $\bhs{\phi\phi,2}$ on which there is a new model problem. 
Near this face the lift of $t$ is $\tau x^2$ so the restriction of $t\eth^2_{\fD}$ to the new boundary face will single out the part of $\eth^2_{\fD}$ that is most singular at the boundary, namely $(\eth^V_{\fD})^2$.
The solution to the new model problem is the product of the Euclidean heat kernel on the base of the boundary fibration with the heat kernel of $(\eth^V_{\fD})^2$, both evaluated in the rescaled time variable $\tau$. As $\tau \to 0$ the solution to the model problem goes to the identity (which is consistent with the model at $\bhs{00,2}$) while as $\tau \to \infty$ it approaches the projection onto the null space of $\eth^V_{\fD}$ (here the constant rank assumption is 
essential).

There is one more submanifold that should be blown-up, the full corner at positive time
\begin{equation*}
	\pa X^2 \times \bbR^+,
\end{equation*}
and this blow-up should be radial (i.e., without any parabolic directions).
The resulting face is denoted $\bhs{11,0}$ and it too has a model problem. We need the solution to the heat operator $t\pa_t + t\eth_{\fD}^2$ restricted to this face with initial condition imposed by matching with the solution of the model problem on $\bhs{\phi\phi,2}$. Thus as $t\to0$ we need to match with the limit as $\tau\to\infty$ of the solution of the previous model, i.e., the initial value should be the projection onto $\cK$. The solution to the model problem is thus the heat kernel of $\eth^{H,\cK}_{\fD}$ conjugated by the projection onto $\cK$.

The heat space $HX_{\fD}$ is the result of blowing-up the three submanifolds described above (though for technical reasons the blow-ups should be made in the opposite order),
\begin{equation*}
	HX_{\fD}
	= \lrspar{ 
	X^2 \times \bbR^+
	; (\pa X)^2 \times \bbR^+ 
	; \diag_\phi \times \{ 0\}, dt 
	; \diag \times \{0\}, dt }.
\end{equation*}
We have described the solution to the heat operator $t\pa_t + t\eth_{\fD}^2$ when restricted to the blown-up boundary faces, $\bhs{00,2}$, $\bhs{\phi\phi,2}$, and $\bhs{11,2}$. Vaillant constructed a heat calculus by singling out distributions on $HX_{\fD}$ that have asymptotic expansions at all of the boundary faces, that act as smoothing operators on functions on $X \times\bbR^+$, and that are (conditionally) closed under composition. He then showed that the solution to the equation \eqref{HeatEq} is an element of this calculus by extending the solutions described above off of the  boundary faces and solving away the error.
If we denote by $\Psi^{2,2,0}_{Heat, \fD}(X,E)$ the elements of this calculus that (in an appropriate sense) vanish to second order at $\bhs{00,2}$ and $\bhs{\phi\phi,2}$, to $\text{zero}^{\text{th}}$ order at $\bhs{11,0}$ and to infinite order at all other boundary faces, then we can state Vaillant's result.

\begin{theorem}[Vaillant \cite{Vaillant}, \S 4]
The solution to \eqref{HeatEq} is an element of \newline $\Psi^{2,2,0}_{Heat, \fD}(X,E)$ with restriction to each of the three blown-up boundary faces given by the solution to the model problem described above.
\end{theorem}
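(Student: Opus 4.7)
The plan is to adapt the triple-space parametrix method of Melrose \cite{APS Book} and Dai-Melrose to the space $HX_{\fD}$, following the heuristic already laid out in the text. The central observation is that after lifting, the operator $L := t\pa_t + t\eth_{\fD}^2$ becomes a $\beta$-related differential operator on $HX_{\fD}$ that is tangent to each of the three new boundary hypersurfaces, and at each one its indicial operator is exactly the model problem described above. Once this is verified, the proof reduces to a standard patch-and-iterate construction, the analytical content being concentrated in the composition formula for the calculus $\Psi_{Heat,\fD}^{*,*,*}$.

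First I would compute the lift of $t\eth_{\fD}^2$ in the projective coordinates near each blown-up face, using the asymptotic description of $\nabla^{\fD}$ from Lemma \ref{ConnectionAsymp} and the decomposition of curvature in \eqref{CurvSplitting}. At $\bhs{00,2}$ the standard parabolic interior blow-up reduces $L$ to the flat Laplacian in Riemann normal coordinates, whose fundamental solution is the Gaussian motivating the construction. At $\bhs{\phi\phi,2}$, the rescaled-time variable $\tau = t/x^2$ singles out the leading singularity of $\eth_{\fD}^2$, which by the form of \eqref{LocalBasis} is $(\eth_{\fD}^V)^2$ up to a Euclidean Laplacian in the base coordinates $y$; the indicial problem factors accordingly and has the product solution stated. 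At $\bhs{11,0}$, the radial blow-up of $(\pa X)^2\times\bbR^+$ allows the leading behavior of $\eth_{\fD}$ in the $\cK$ direction to emerge, and by the normal-form result \eqref{HorizontalNorm} the restriction of $L$ acts on sections of $\cK$ as $t\pa_t + t(\eth^{H,\cK}_{\fD})^2$, whose fundamental solution is the heat kernel of $\eth^{H,\cK}_{\fD}$ pre- and post-composed with $\cP_\cK$.

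Second, I would patch the three model solutions into a first approximation $H^{(0)}\in \Psi^{2,2,0}_{Heat,\fD}(X,E)$. The matching at the corners of $HX_{\fD}$ is exactly what the asymptotic behavior of the models enforces: the limit $\tau\to 0$ at $\bhs{\phi\phi,2}$ reproduces the identity inherited from $\bhs{00,2}$, while the limit $\tau\to\infty$ at $\bhs{\phi\phi,2}$ produces $\cP_\cK$, which is the initial value for the model at $\bhs{11,0}$. The constant rank hypothesis is essential here, since it is what makes $e^{-\tau(\eth_{\fD}^V)^2}\to \cP_\cK$ in a uniform sense over $Y$. Taylor-expanding at each face and formally solving order by order, then Borel-summing, produces $H^{(0)}$ with residual error $LH^{(0)}$ vanishing to strictly higher order at each of $\bhs{00,2}$, $\bhs{\phi\phi,2}$, $\bhs{11,0}$ and to infinite order elsewhere.

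Finally I would correct $H^{(0)}$ to an exact solution by a Volterra-type Neumann series
\begin{equation*}
  H_t = H^{(0)}_t - H^{(0)}_t * (LH^{(0)})_t + H^{(0)}_t*(LH^{(0)})_t*(LH^{(0)})_t - \cdots ,
\end{equation*}
convergent in $\Psi^{2,2,0}_{Heat,\fD}(X,E)$ because each iteration gains positive order at the boundary faces and because of the positivity of $t$ in the residual. The main obstacle, and the heart of Vaillant's analysis, is establishing the composition theorem for $\Psi^{*,*,*}_{Heat,\fD}$ and checking that the Volterra iteration stays within the calculus: the interaction between the fiber-infinite limit at $\bhs{\phi\phi,2}$ and the $b$-type behavior at $\bhs{11,0}$ means that the index-set bookkeeping under composition is delicate, and it is this bookkeeping that forces the specific order $(2,2,0)$ with infinite-order vanishing at all other faces. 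Once composition is in hand, uniqueness of solutions to \eqref{HeatEq} and the prescribed restrictions to the boundary faces are immediate from the construction.
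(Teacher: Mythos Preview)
Your proposal is correct and follows essentially the same approach as the paper's outline (which is itself a summary of Vaillant's argument): identify the model problems at the three blown-up faces, check their compatibility at the corners (using the constant rank assumption for the matching between $\bhs{\phi\phi,2}$ and $\bhs{11,0}$), patch into a first parametrix, then iterate away the error using the composition formula for the heat calculus and finish with a Volterra argument. The paper does not give a detailed proof of this theorem since it is a review statement, but the sketch preceding it and the proof of the families analogue (Proposition \ref{prop:HeatKernels}) proceed exactly along the lines you describe.
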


A little more precisely, if $\rho_S$ denotes a boundary defining function for $\bhs{S}$, then the elements of $\Psi^{2,2,0}_{H\fD}(X,E)$ are smooth sections of the density bundle
\begin{equation}\label{KernelSpace}
	\rho_{00,2}^2 \rho_{\phi\phi,2}^2 \rho_{11,0}^{-\dim Z} 
	\lrspar{ \frac{(x')^n}{t^{n/2}} \frac{dt}t \beta^*_{H,R}\FC \Omega(X) \otimes \Hom(E) }
\end{equation}
that vanish to infinite order at all boundary faces other than the three blown-up ones.
Here $\beta_{H,R}$ is the projection to the right factor of $X$, $\FC \Omega(X)$ is the $\fC$ density bundle on $X$ and $\Hom(E)$ is the bundle over $X^2$ whose fiber over $(\zeta,\zeta')$ is the space of homomorphisms from $E_\zeta$ to $E_{\zeta'}$.
A little unwinding shows that the restriction of the heat kernel to the diagonal (away from the boundary) is a density on $X$ with a singularity of $t^{-n/2}$ as $t\to0$.
For the index theorem it is necessary (as recalled in the next section) to compute the supertrace of the heat kernel as time goes to zero. The existence of this limit is due to a "fantastic cancellation" of all of the potentialy singular terms in the supertrace.  One of the best ways to see this is via the use
of Getzler rescaling.

Getzler rescaling is carried out by Vaillant (following \cite[Chapter 8]{APS Book}) by refining the calculus to one with the rescaling built in and by showing that this refined calculus contains the heat kernel (essentially by constructing it again).
Let us describe this refined calculus.

Consider the restriction of the bundle $\Hom(E)$ to $\bhs{00,2}$. Since this face lies over the diagonal in $X^2$ (and $\Hom(E)$ is pulled-back to $HX_{\fD}$ from $X^2$), we can identify $\Hom(E)$ at this face with $\hom(E)$, the bundle over $X$ whose fiber over $\zeta$ consists of all homomorphisms from $E_\zeta$ to itself. A fundamental property of Clifford bundles is that the homomorphisms have a decomposition
\begin{equation*}
	\hom(E) \cong \Cl(T^*X) \otimes \Hom'_X(E)
\end{equation*}
where $\Hom'_X(E)$ denotes the homomorphisms that commute with Clifford multiplication. Furthermore the natural filtration of $\Cl(T^*X)$ by Clifford degree induces a filtration of $\hom(E)$ by
\begin{equation*}
	\Cl_{00,2}^{(i)} := \Cl^{(i)}(T^*X) \otimes \Hom'_X(E).
\end{equation*}
Something similar is true at the boundary face $\bhs{\phi\phi,2}$. This face lies over $\{y=y', x=x'=0\}$, so if we denote by $T^*Y^+ = T^*Y \oplus \bbR$ the $b$ cotangent bundle of $Y^+:=Y \times \bbR^+_x$ restricted to the boundary, we have
\begin{equation*}
	\Hom(E)\rest{ \bhs{\phi\phi,2} } \cong \Cl(T^*Y^+) \otimes \Hom'_{Y^+}(E)
\end{equation*}
where $\Hom'_{Y^+}(E)$ denotes the homomorphisms that commute with Clifford multiplication by vectors in $T^*Y^+$.  This suggests to define
\begin{equation*}
	\Cl_{\phi\phi,2}^{(i)} := \Cl^{(i)}(T^*Y^+) \otimes \Hom'_{Y^+}(E).
\end{equation*}

We want to use these filtrations to single out sections of $\Hom(E)$ (over $HM_{\fD}$), however we should first extend the filtrations off of the boundary faces.
One way of doing this is by using the connection
\begin{equation*}
	\nabla^{\Hom(E)} = \pa_t \;dt \otimes \nabla^E \otimes \nabla^{E^*},
\end{equation*}
a vector field $N$ transverse to $\bhs{00,2}$ but tangent to all other boundary faces, and another vector field $N'$ transverse to $\bhs{\phi\phi,2}$ but tangent to all other boundary faces. Then we declare that a section $s$ of $\Hom(E)$ is {\bf rescaled} if
\begin{gather*}
	\lrpar{\nabla^{\Hom(E)}_N}^j s \rest{00,2} \in \Cl_{00,2}^{(j)}(E),
	\text{ for } j \in \{0,1,\ldots, \dim X\} \\
	\lrpar{\nabla^{\Hom(E)}_{N'}}^k s \rest{\phi\phi,2} \in \Cl_{\phi\phi,2}^{(k)}(E),
	\text{ for } k \in \{0,1,\ldots, \dim Y+1\}.
\end{gather*}
Melrose showed that there is a bundle, $\Gr(E)$, whose sections are precisely the rescaled sections of $\Hom(E)$. Vaillant defines a rescaled heat calculus by replacing $\Hom(E)$ in \eqref{KernelSpace} with $\Gr(E)$. 

Showing that $t\pa_t + t\eth_{\fD}^2$ acts on elements of the rescaled heat calculus reduces, by means of the Lichnerowicz formula, essentially to showing that the connection $\nabla^E$ does. Melrose gave a criterion for this and Vaillant uses the asymptotics of the curvature (Lemma \ref{ConnectionAsymp}) to show that the criterion holds. The construction of the heat kernel in this rescaled calculus then proceeds by solving the model problems at each of the boundary faces.

At $\bhs{00,2}$ the model problem is the heat equation for the harmonic oscillator and, as shown by Getzler, Mehler's formula extends to this context to give an explicit solution. The model problem at $\bhs{\phi\phi,2}$ essentially splits into a problem in the base directions (again a harmonic oscillator) and a problem on the fibers. The latter is the heat equation for a superconnection $\bbB_{\fD}$, defined just like the Bismut superconnection except that the second fundamental form of the fibers $\Sec{\phi}$ is replaced by $\Sec{\fD}$. It is easy to see that these solutions patch together and then Vaillant again solves away the error to construct the solution to the heat equation.
To state Vaillant's construction as a theorem, we denote by 
$\Psi^{2,2,0}_{RHeat,\fD}(X,E)$ the subset of 
$\Psi^{2,2,0}_{Heat,\fD}(X,E)$ that takes values in $\Gr(E)$ instead of $\Hom(E)$.

\begin{theorem}[Vaillant \cite{Vaillant}, \S 5]
The solution to \eqref{HeatEq} is an element of \newline $\Psi^{2,2,0}_{RHeat,\fD}(X,E)$.
\end{theorem}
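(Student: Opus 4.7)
The plan is to build the heat kernel within the rescaled calculus by prescribing its leading behavior at each of the three blown-up boundary faces and iteratively correcting the error via Volterra composition. The previous theorem already supplies existence in the unrescaled calculus $\Psi^{2,2,0}_{Heat,\fD}(X,E)$; what must be upgraded is that the restrictions to $\bhs{00,2}$ and $\bhs{\phi\phi,2}$ respect the Clifford filtrations $\Cl^{(i)}_{00,2}$ and $\Cl^{(i)}_{\phi\phi,2}$ in the orders dictated by Getzler rescaling. Morally this is the ``fantastic cancellation'' that makes $\Str\lrpar{e^{-t\eth_{\fD}^2}}$ finite as $t \to 0$.

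First I would verify that $t\pa_t + t\eth_{\fD}^2$ preserves the rescaled calculus, i.e.\ sends rescaled sections to rescaled sections. By the Lichnerowicz formula $\eth_{\fD}^2 = (\nabla^E)^*\nabla^E + \tfrac14\scal + K_E'$, so this reduces to Melrose's criterion on $\nabla^E$ from \cite[Chapter 8]{APS Book}, which demands that the covariant derivative of a rescaled section lie in the next Clifford filtration at the relevant face. At $\bhs{00,2}$ this is the classical Getzler rescaling on a Riemannian manifold. At $\bhs{\phi\phi,2}$ it is precisely Lemma \ref{ConnectionAsymp} that supplies the needed Clifford-degree bounds: the $\cO(x)$ vanishing of $g_{\fD}\lrpar{R^{\fD}(N,W)A,B}$ on $[\FD N\pa X]^{\otimes 2}$ and the explicit form of $\nabla^{\fD}_N R^{\fD}$ in \eqref{Prop1.14b} translate directly into compatibility of $\nabla^E$ with the filtration coming from $\Cl^{(\bullet)}(T^*Y^+)$.

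Next I would solve the three model problems. At $\bhs{00,2}$ the normal operator is the Euclidean heat operator coupled to the harmonic-oscillator potential from the Getzler-rescaled curvature, which Mehler's formula solves explicitly. At $\bhs{\phi\phi,2}$ the normal operator decouples into a harmonic oscillator in the $Y_+$ directions and, on the fibers of $\phi$, the heat equation for the superconnection $\bbB_{\fD}$ -- the Bismut superconnection whose second fundamental form has been replaced by $\Sec{\fD}$, as dictated by the curvature asymptotics. At $\bhs{11,0}$ the model is the heat equation for $(\eth^{H,\cK}_{\fD})^2$ on $\cK$, a standard $b$-heat equation on $Y_+$ in view of \eqref{HorizontalNorm} and the constant rank assumption. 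The matching conditions between adjacent faces are automatic from the prescription: the $\bhs{\phi\phi,2}$ solution tends to $\Id$ as $\tau\to 0$ and to $\cP_\cK$ as $\tau\to\infty$, matching $\bhs{00,2}$ at $\hat t \to 0$ and supplying the correct initial datum for the $\bhs{11,0}$ model. Volterra iteration within $\Psi^{2,2,0}_{RHeat,\fD}(X,E)$, using its conditional closure under composition, then converts this parametrix into an exact solution.

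The main obstacle is the compatibility of Getzler rescaling with the $\fD$-connection at $\bhs{\phi\phi,2}$, a face that is absent both from the closed-manifold setting of \cite{Bismut} and from the pure $b$-setting of \cite{APS Book}. Here the horizontal/vertical splitting of $T\pa X$ and the transition between $\FC$ and $\FD$ bundles enter simultaneously, and the verification hinges on the precise form of $\Sec{\fD}$ and $\Curv{\fD}$ given by \eqref{TwoSecs'} and \eqref{CurvfD=x}; it is only because $\Curv{\fD} = x\Curv{\phi}$ vanishes to first order that the rescaled calculus survives the passage from the model problem at this face to a full neighborhood of it.
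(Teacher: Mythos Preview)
Your proposal is correct and follows essentially the same route as the paper's summary of Vaillant's argument: verify via Lichnerowicz and Melrose's criterion (with Lemma~\ref{ConnectionAsymp} supplying the needed curvature asymptotics at $\bhs{\phi\phi,2}$) that $t\pa_t + t\eth_{\fD}^2$ preserves the rescaled calculus, solve the model problems at $\bhs{00,2}$ (Mehler), $\bhs{\phi\phi,2}$ (harmonic oscillator in the base times the heat kernel of $\bbB_{\fD}$), and $\bhs{11,0}$ (the $b$-heat equation on $\cK$), check compatibility at the corners, and finish by Volterra iteration. Your emphasis on \eqref{CurvfD=x} and \eqref{TwoSecs'} as the source of the rescaling compatibility at $\bhs{\phi\phi,2}$ is exactly the point the paper highlights.
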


\subsection{Vaillant's index theorem for $\fD$ Dirac-type operators} \label{sec:VaillantIndex} $ $ \newline
In section \S 5 of his thesis, Vaillant proves an index theorem for $\fD$ Dirac-type operators satisfying the constant rank assumption. These operators need not be Fredholm, in which case Vaillant's formula is for the {\em extended} $L^2$-index. The proof is modeled on Melrose's proof of the Atiyah-Patodi-Singer index theorem in \cite{APS Book} and the investigation of the adiabatic limit of analytic torsion by Dai and Melrose \cite{DaiMelrose}.

First recall the heat equation proof of the index theorem on a closed manifold, $X$. 
If $\eth$ is a Dirac-type operator acting on the $\bbZ_2$-graded Clifford bundle $E$, then $\eth$ is elliptic and hence automatically Fredholm with index
\begin{equation*}
	\ind(\eth):= \dim\ker\eth - \dim\coker\eth.
\end{equation*}
The heat kernel of $\eth^2$ is the solution to 
\begin{equation*}
	\begin{cases} (\pa_t + \eth^2)H_t = 0 \\ H_{t=0} =\Id \end{cases}
\end{equation*}
and is denoted $e^{-t\eth^2}$.
McKean and Singer noticed that its supertrace (the difference of the trace on $E^+$ and the trace on $E^-$) is in fact independent of $t$.
The index is the limit as $t\to\infty$ of this supertrace and hence we have 
\begin{equation*}
	\ind (\eth) 
	= \lim_{t \to \infty} \Str(e^{-t\eth^2})
	= \lim_{t \to 0} \Str(e^{-t\eth^2}).
\end{equation*}
Getzler showed that this last term could be computed explicitly by a method now known as Getzler rescaling and gives the Atiyah-Singer index formula
\begin{equation}\label{ASFormula}
	\ind(\eth) = \int_X \hat A(X) \Ch'(E)
\end{equation}
in terms of the $\hat A$-genus of the manifold and the `twisting' Chern character of the bundle $E$.

Melrose proved the Atiyah-Patodi-Singer index theorem by adapting this proof to the context of manifolds with asymptotically cylindrical ends or $b$-manifolds (a $b$ metric is a $\fD$ metric where the fiber of the boundary fibration is a single point).
If $X$ is a manifold with an exact $b$-metric, $\eth_b$ is a $b$ Dirac-type operator, and $e^{-t\eth^2_b}$ is the heat kernel of its square, then the proof described above can be applied to compute the index of $\eth_b$, but most steps are not as simple as on a closed manifold. 
First $\eth_b$ is not necessarily Fredholm and even when it is Fredholm its heat kernel is not trace-class. Melrose introduces a renormalized trace to replace the usual trace, but then the renormalized supertrace of the heat kernel is not independent of $t$. Nevertheless the limits as $t\to\infty$ and $t \to 0$ can be computed just as before and so we obtain
\begin{equation*}
	\ind (\eth_b) 
	= \lim_{t \to \infty} {}^R\Str(e^{-t\eth^2})
	= \lim_{t \to 0} {}^R \Str(e^{-t\eth^2}) 
	+ \int_0^\infty \pa_t {}^R\Str(e^{-t\eth^2}).
\end{equation*}
and Melrose shows that this last term is given by the $\eta$ invariant introduced by Atiyah, Patodi and Singer,
\begin{equation*}
	\ind(\eth_b) = \int_X \hat A(X) \Ch'(E) - \frac12\eta(\eth_{\pa X}).
\end{equation*}

Vaillant's proof of the $\fD$ index theorem follows the same steps as Melrose's proof of the $b$ index theorem and we briefly describe what is involved in each step. As mentioned above, the ellipticity of $\eth_{\fD}$ is not enough to guarantee that it is a Fredholm operator, and Vaillant assumes that $\eth_{\fD}$ satisfies the constant rank assumption. Given this assumption, Vaillant constructs the heat kernel as described in the previous section. As for the $b$ calculus, the heat kernel is not trace-class and Vaillant uses the renormalized trace as introduced in \cite{MelroseNistor} (equivalent to the $b$-trace used by Melrose in \cite{APS Book}, see \cite{Albin}) which we now describe.

Recall that whenever an operator acts by an integral kernel so that
\begin{equation*}
	Au(\zeta) =  \int K_A(\zeta, \zeta') u(\zeta') \; d\zeta'
\end{equation*}
and is trace-class, Lidskii's theorem says that its trace is given by
\begin{equation*}
	\Tr A = \int K_A(\zeta, \zeta) \; d\zeta.
\end{equation*}
When $X$ is the interior of a manifold with boundary it can happen that this integral does not converge - even for a smoothing operator - because the measure blows-up at the boundary.
Let $x$ be a boundary defining function and assume that $K_A$ has an asymptotic expansion in $x$ as $\zeta$ approaches the boundary, then the function
\begin{equation*}
	F(z) = \int x^z K_A(\zeta, \zeta) \; d\zeta
\end{equation*}
is holomorphic on a half-plane and extends meromorphically to all of the complex plane. We define {\bf the renormalized trace of $A$} to be the finite part of $F(z)$ at $z=0$,
\begin{equation}\label{DefRTr}
	\RTr A := \FP_{z=0} \int x^z K_A(\zeta, \zeta) \; d\zeta.
\end{equation}
It is easy to see that if $A$ is trace-class then ${}^R\Tr(A)=\Tr(A)$ so the renormalized trace of $A$ is an extension of the usual trace. However the renormalized trace is not actually a trace in that it does not always vanish on commutators.

Vaillant uses the description of the spectral measure at zero to show that
\begin{equation*}
	\lim_{t\to\infty} {}^R \Str\lrpar{ e^{-t\eth_{\fD}^2} } = \ind_{-}(\eth_{\fD}),
\end{equation*}
the extended index of $\eth_{\fD}$. The contribution as $t\to0$ has two parts corresponding to the two boundary faces in $HX_{\fD}$ over the diagonal at $t=0$, namely $\bhs{00,2}$ and $\bhs{\phi\phi,2}$. These can both be computed using the model problems solved by the heat kernel at these faces. Thus the former contributes the usual Atiyah-Singer integral occurring in \eqref{ASFormula} (convergence of the integral follows from the fact that the Levi-Civita connection of a $\fD$ metric is a true connection). The latter boundary face contributes 
\begin{equation*}
	\int_Y \hat A(Y) \hat\eta(\bbB_{\fD})
\end{equation*}
where $\hat\eta(\bbB_{\fD})$ is the Bismut-Cheeger $\eta$ form associated to the superconnection $\bbB_{\fD}$ (described in the previous section).

Finally the contribution from finite time,
\begin{equation*}
	\int_{\bbR^+} \pa_t {}^R\Str ( e^{-t\eth^2} ),
\end{equation*}
reduces to a contribution from the third of the faces blown-up to construct $HX_{\fD}$, that is, $\bhs{11,0}$. Computations very similar to those occurring in the proof of the index theorem for $b$-metrics show that this contribution is equal to the $\eta$-invariant of the horizontal operator $\eth^{H,\cK}_{\fD}$. Putting these three contributions together, Vaillant obtains the following theorem.

\begin{theorem}[Vaillant \cite{Vaillant}, Theorem 5.29]
The extended index, $\ind_{-}(\eth_{\fD})$, of a $\fD$ Dirac-type operator $\eth_{\fD}$ satisfying the constant rank assumption is given by
\begin{equation*}
	\frac1{(2\pi i)^{n/2}} \int_X \hat A(X) \Ch'(E)
	- \frac1{(2\pi i)^{\lfloor\frac{h+1}{2}\rfloor}} \int_Y \hat A(Y) \hat\eta(\bbB_{\fD})
	- \frac12 \eta(\eth^H),
\end{equation*}
where $n = \dim X$ and $h = \dim Y$.
\end{theorem}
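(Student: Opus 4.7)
The plan is to follow the McKean--Singer heat equation strategy as adapted by Melrose for $b$-operators, using the heat kernel constructed in the previous subsection, the renormalized supertrace ${}^R\Str$ of \eqref{DefRTr}, and Getzler rescaling. The starting point is the identity
\begin{equation*}
	\ind_{-}(\eth_{\fD}) = \lim_{t\to\infty} {}^R\Str\bigl(e^{-t\eth_{\fD}^2}\bigr),
\end{equation*}
already stated in the excerpt as a consequence of Vaillant's analysis of the spectral measure at zero. Because ${}^R\Tr$ is not a genuine trace on the calculus, the supertrace is no longer $t$-independent, and one must write
\begin{equation*}
	\ind_{-}(\eth_{\fD}) = \lim_{t\to 0^+} {}^R\Str\bigl(e^{-t\eth_{\fD}^2}\bigr) + \int_0^\infty \frac{d}{dt}\,{}^R\Str\bigl(e^{-t\eth_{\fD}^2}\bigr)\,dt,
\end{equation*}
provided the integrand is integrable near both endpoints. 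The program is then to identify each of the three resulting contributions with one of the three terms on the right-hand side of the formula.

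First I would compute the short-time limit. Since the rescaled heat kernel lives in $\Psi^{2,2,0}_{RHeat,\fD}(X,E)$, its restriction to the diagonal as $t\to 0$ is controlled by the model problems at the two faces $\bhs{00,2}$ and $\bhs{\phi\phi,2}$, which are blown up parabolically over $\{t=0\}$. The ``fantastic cancellation'' in the supertrace is made explicit by the Getzler rescaling built into $\Gr(E)$: the model at $\bhs{00,2}$ is the generalized harmonic oscillator solved by Mehler's formula and contributes precisely the Atiyah--Singer integrand $\hat A(X)\,\Ch'(E)/(2\pi i)^{n/2}$, whose integral converges because the Levi-Civita connection of an exact $\fD$ metric is a genuine connection on $\FD TX$. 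The model at $\bhs{\phi\phi,2}$ splits as a product of a harmonic oscillator in the base directions and the heat operator of the rescaled Bismut superconnection $\bbB_{\fD}$ on the fibers; tracing out the fiber part and integrating the Gaussian in the base directions produces $\hat A(Y)\,\hat\eta(\bbB_{\fD})/(2\pi i)^{\lfloor (h+1)/2\rfloor}$ against the base $Y$. Together these give the first two terms, up to sign, of the claimed formula.

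Next I would handle the finite-time integral, which is where the failure of ${}^R\Tr$ to be tracial appears. Following Melrose's treatment in the $b$-case, $\frac{d}{dt}{}^R\Str(e^{-t\eth^2_{\fD}})$ is a boundary defect that can be expressed as a supertrace on the face $\bhs{11,0}$, where the model for the heat operator is the heat kernel of $(\eth^{H,\cK}_{\fD})^2$ conjugated by projection onto $\cK$. By \eqref{HorizontalNorm} the operator $\eth^{H,\cK}_{\fD}$ is a zero${}^{\text{th}}$ order perturbation of a Dirac-type $b$-operator on $Y$, so the contribution of this face is identified, exactly as in the $b$ calculus, with $\tfrac12\eta(\eth^{H,\cK}_{\fD})$. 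Assembling the three contributions yields the theorem.

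The main obstacle will be making each of these identifications in a way compatible with the rescaled heat calculus: one has to justify exchanging the limit $t\to 0$ with the Getzler rescaling (showing that sections of $\Gr(E)$ produced by the construction of the heat kernel reduce to the correct model at each blown-up face), and one has to relate the defect $\frac{d}{dt}{}^R\Str$ to a boundary supertrace at $\bhs{11,0}$, which requires a commutator formula for $[{}^R\Tr,\,\cdot\,]$ on $\fD$-operators analogous to Melrose's formula for the $b$-trace. Granting these technical steps, the three pieces combine exactly into the stated formula, with the sign of the middle term accounting for Vaillant's convention on the relative supertrace and eta form mentioned after Theorem \ref{DFamiliesIndex}.
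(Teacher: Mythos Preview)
Your proposal is correct and follows essentially the same approach as the paper's account of Vaillant's proof in \S\ref{sec:VaillantIndex}: the McKean--Singer/Melrose strategy with the renormalized supertrace, the $t\to0$ limit split into contributions from $\bhs{00,2}$ (the Atiyah--Singer integrand via Mehler's formula) and $\bhs{\phi\phi,2}$ (the $\hat A(Y)\hat\eta(\bbB_{\fD})$ term), and the finite-time defect localizing to $\bhs{11,0}$ to yield $\tfrac12\eta(\eth^{H,\cK}_{\fD})$. The technical obstacles you flag---compatibility of the Getzler rescaling with the short-time limit and the commutator/trace-defect formula needed to push the $t$-derivative onto the boundary face---are exactly the points the paper identifies as the substance of Vaillant's \S5.
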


{\bf Remark.} Our conventions for $\eta$ invariants consistently differ from those of Vaillant by a sign.  Moreover, for eta forms, we use the convention of Bismut and Cheeger \cite{BC0}
instead of the one of Vaillant.  For $h$ odd, the eta form of Vaillant differs from the one of Bismut and Cheeger only by a factor of $-1$, while for $h$ even, the eta form of
Vaillant is $-\sqrt{2\pi i}$ times the one of Bismut and Cheeger.

\section{Families of $\fD$ Dirac-type operators and Fredholm perturbations}

In the previous section we have explained how Vaillant extended Melrose's proof of the index theorem for exact $b$-metrics to an index theorem for exact $\fD$ metrics. In this section we will begin the process of extending the proof of Melrose and Piazza of the index theorem for families of Dirac-type operators associated to exact $b$-metrics to an index theorem for such families associated to exact $\fD$ metrics.

\subsection{Families of $\fD$ Dirac-type operators} \label{sec:FamiliesDirac} $ $ \newline
We start with a fibration $M \xrightarrow\psi B$ as in \eqref{fullfib},
\begin{equation*}
	\xymatrix { & & X \ar@{-}[r] & M \ar[d]^{\psi} \\
	Z \ar@{-}[r] & \pa X \ar[d]^{\phi}  \ar@{^{(}->}[ur] \ar@{-}[r] 
		& \pa M \ar[d]^{\wt \phi}  \ar@{^{(}->}[ur]  & B \\
	& Y \ar@{-}[r] & D \ar[ur]_{\psi_D} & }
\end{equation*}
Given a boundary defining function $x$ for $\pa M$ we define $\FC TM$ and $\FD TM$ just as in \eqref{BdlesDef}. 
The differential of $\psi$ extends from the interior to a map $\psi_*: \FC TM \to TB$, so we can define
\begin{equation*}
	\FC TM/B = \ker \psi_*
\end{equation*}
and define $\FD TM/B$ in terms of $\FC TM/B$.
We choose a connection for $\psi$ in the form of a splitting $\FD TM = \FD TM/B \oplus \psi^* TB$ and denote the projections onto each factor by $\v\psi$ and $\h\psi$ respectively.

A family of exact $\fD$ metrics, $g_{\fD}$, is a fibre metric on the bundle $\FD TM/B$ whose restriction to each fiber is an exact $\fD$ metric as described in \S\ref{sec:BhvCusps}. We will denote the vertical $\fD$ Clifford algebra by
\begin{equation*}
	\Cl_\psi(M) := \Cl(\FD T^*M/B).
\end{equation*}
A $\fD$ Clifford module, $E$, over the fibers of $\psi$ is a bundle over $M$ that restricts to each fiber to be a Clifford module associated to the $\fD$ metric $g_{M/B}$ as described in \S\ref{sec:DiracTypeOps}.  As before we assume that the fibers of $M \xrightarrow\psi B$ are even-dimensional so that $\Cl_\psi\lrpar M$ is $\bbZ/2$-graded and that $E$ has a compatible $\bbZ/2$=grading
\begin{equation*}
	E = E^+ \oplus E^-.
\end{equation*}
We denote the resulting $\psi$-vertical family of $\fD$ Dirac-type operators by
\begin{equation*}
	D_{\fD}:\CI\lrpar{M/B;E} \xrightarrow{\nabla^E} \CI\lrpar{M/B;\FD T^*M/B\otimes E}
	\xrightarrow{\cll} \CI\lrpar{M/B;E}.
\end{equation*}
The family of operators $D_{\fD}$ acting on the space $L^2_{\fD}(M/B;E)$ is unitarily equivalent to the operators
\begin{equation*}
	\eth_{\fD} = D_{\fD} - \tfrac v2 \cl{\tfrac{dx}x}
\end{equation*}
acting on $L^2_b(M/B;E)$ and, as for one operator, we prefer to work on this space.

As before, whether or not the family of elliptic operators $\eth_{\fD}$ is Fredholm is determined by two model operators. The first of these is a family of Dirac operators on the fibers of the fibration $Z - \pa M \xrightarrow{\wt \phi} D$ which we again denote $\eth^V_{\fD}$ and is obtained by restricting $x\eth_{\fD}$ to the boundary, i.e., 
\begin{equation*}
	\eth^V_{\fD} := x\eth_{\fD}\rest{\pa M} 
	\in \Psi^1\lrpar{ \pa M/D; E}.
\end{equation*}
We say that $\eth^V_{\fD}$ satisfies the {\bf constant rank assumption} if the dimension of the null space is independent of the point in $D$. We use the same term for this as for the constant rank assumption for a single operator from \S \ref{sec:DiracTypeOps} and hope that, as we shall only consider families of operators from now on, this will not result in confusion. We point out that the constant rank assumption for families is stronger than imposing the constant rank assumption on each of the individual operators $\eth_{\fD}$.

It is easy to see that the operator $x\eth_{\fD}$ is itself Fredholm as an element of the $\fC$ calculus precisely when $\eth^V_{\fD}$ is invertible (e.g., from \eqref{NormalFCOp} below). In which case, by results of Mazzeo-Melrose, $x\eth_{\fD}$ has a parametrix with compact error inside $\Psi^{-1}_{\fC}\lrpar{M/B;E}$, say $B$. Thus $xB$ is both a parametrix for $\eth_{\fD}$ and a compact operator so $\eth_{\fD}$ necessarily has purely discrete spectrum and a trace-class heat kernel, properties we do not usually expect of a geometric $\fD$ Dirac-type operator.
It is therefore interesting that we are often able to perturb $x\eth_{\fD}$ by an operator that is smoothing in the interior and obtain a Fredholm $\fC$ operator.

Recall that Mazzeo and Melrose constructed in \cite{MazzeoMelrose} a pseudo-differential calculus, $\Psi_{\fC}^*$, on manifolds with a fibered boundary that includes differential $\fC$ operators and their parametrices (when these exist). 
Operators in this calculus have two model operators: the principal symbol (defined as on closed manifolds) and the normal operator which models the behavior at the boundary (and takes values in a suspended calculus of pseudo-differential operators).
An operator in the $\fC$ calculus is Fredholm if both model operators are invertible and it is compact if they both vanish. Elements of $\Psi^{-\infty}_{\fC}$ are smoothing in the interior but are not compact operators (when acting on $L^2_{\fC}$ for instance).

An operator $Q \in \Psi^{-\infty}_{\fC}(M/B; E)$ is a {\bf large smoothing perturbation} if $x\eth_{\fD} + Q$ is a Fredholm $\fC$ operator.
As already mentioned, in this case the operator $\eth_{\fD} + \frac1x Q$ has compact resolvent and discrete spectrum so these perturbations will generally change the structure of $\eth_{\fD}$ a great deal.

An operator $\eth_{\fD}$ satisfying the constant rank assumption does not necessarily admit a large smoothing perturbation.  However, this assumption will sometimes allow us to make a subtler correction and obtain a Fredholm operator corresponding to the second of the model operators of $\eth_{\fD}$. Denote by $\cK$ the bundle over $D$ with fibers the null spaces of the operators $\eth^V_{\fD}$.
If $\dim \cK >0$ there is a second induced family, $\eth^{H,\cK}_{\fD}$, obtained by restricting $\eth_{\fD}$ to $\cK$ at the boundary. This family acts on the fibers of the fibration $ Y - D \xrightarrow{\psi_D} B$.
As mentioned in \S\ref{sec:DiracTypeOps}, Vaillant showed that in this case $\eth_{\fD}$ is Fredholm if and only if $\eth^{H,\cK}_{\fD}$ is invertible. 
Let $\cP_\cK$ denote an element of $\Psi^0_b(M/B;E)$ such that $I_b(\cP_{\cK})$ is the projection onto $\cK$.
An operator $Q' \in \Psi^{-\infty}_b\lrpar{M/B;\cK}$ is a {\bf small smoothing perturbation} if $\eth_{\fD} + Q'\cP_\cK$ is Fredholm (its index bundle is easily seen to be independent of the choice of $\cP_{\cK}$).

\subsection{Existence of Fredholm perturbations} \label{sec:Perturbations} $ $ 
\newline
We now address the existence of large and small smoothing perturbations. 
Given an elliptic $\fC$ operator, $A \in \Psi^*_{\fC}(M/B;E)$, Melrose and the second author investigated the existence of Fredholm perturbations by elements of $\Psi^{-\infty}_{\fC}(M/B;E)$ in \cite{MelroseRochon}. Such a perturbation exists if and only if a topological obstruction on the principal symbol of $A$ vanishes. Precisely, the K-theory class determined by $\sigma(A)$ must be in the null space of the families index map
\begin{equation*}
	K_c(T^*M/B) \xrightarrow{AS} K_c^1(T^*D/B).
\end{equation*}
For Dirac-type operators, we will be interested in more specific perturbations.

If $\eth_{\fD}$ is a family of $\fD$ Dirac-type operators,  
a simple computation \cite[Proposition 3.6]{Vaillant} (or an appeal to naturality) shows that the normal operator of $x\eth_{\fD}$ is
\begin{equation}\label{NormalFCOp}
	N(x\eth_{\fD}) = \eth^V_{\fD} + \eth_{\FC N\pa X}.
\end{equation}
The \emph{normal family} of a $\fC$ operator (also called the indicial family) is obtained by taking the Fourier transform of the normal operator in the $x$ and $y$ directions. For every point $(q,\xi,\eta)\in T^*Y^+$ we have an operator (see \cite[Lemma 9]{MazzeoMelrose})
\begin{equation*}
	\hat N(x\eth_{\fD})(q, \xi,\eta) 
	\in \Psi^1(\phi^{-1}(q);E) 
\end{equation*}
and clearly $\hat N(x\eth_{\fD})(q, 0, 0)= \eth^V_{\fD}$.

Suppose now that $A\in \Psi^{-\infty}(\pa M/D;E)$ is a self-adjoint 
perturbation such that the family
$\eth^{V}_{\fD}+A$ is invertible.  Suppose also that A anti-commutes with Clifford multiplication
by odd elements of $\Cl(T^*D^+/B)$.  We say in that case that $A$ is a 
{\bf $\Cl(T^{*}D^+/B)$-odd self-adjoint perturbation}.
To get a corresponding large smoothing perturbation, let 
$\hat{\rho}\in \dot{\mathcal{C}}^{\infty}(T^{*}(D/B))$ be a smooth function symmetric with 
respect to the zero section of $T^{*}(D/B)\to D$.  Assume also that $\rho$ is identically one
on the zero section.  Consider the function $\hat{\rho}_{\eps}(t):=\rho(t\eps)$ for
$\eps>0$ acting by rescaling in each fiber of $T^{*}(D/B)\to D$.  It is such that 
$\hat{\rho}_{\eps}$ approximates the identity function on $T^{*}(D/B)$ as $\eps\to 0$.  
Alternately and to compare with \cite[Lemma 9]{MelrosePiazza}, notice that this means
its fibrewise inverse Fourier transform, which is a fibrewise volume form
$\rho_{\eps}\in\dot{\mathcal{C}}^{\infty}(T(D/B);\Lambda^{h}(T(D/B)))$, approximates
the Dirac delta function of the zero section of $T(D/B)\to D$ as $\eps\to 0$.  Because of the
decay conditions on $\hat{\rho}$ and $\hat{\rho}_{\eps}$, the operator 
$\hat{A}^{\eps}:= \hat{\rho}_{\eps}A$ can be interpreted as the normal family of some
operator $A^{\eps}\in \Psi_{\fC}^{-\infty}(M/B;E)$,
\[
     \hat{N}(A^{\eps})= \hat{A}^{\eps}.
\]
This is the large Fredholm perturbation we are looking for.
\begin{lemma}
Let $A\in \Psi^{-\infty}(M/D;E)$ be a $\Cl(T^{*}D^+/B)$-odd self-adjoint perturbation such
that $\eth^{V}_{\fD}+A$ is invertible.  Then for each $\eps>0$, any operator
$A^{\eps}\in \Psi^{-\infty}_{\fC}(M/B;E)$ with normal family
family $\hat{N}(A)\hat{A}^{\eps}= \hat{\rho}_{\eps} A$ is a Fredholm perturbation
of $x\eth_{\fD}$.
\label{fp.1}\end{lemma}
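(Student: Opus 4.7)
The plan is to verify the two-fold Fredholm criterion for the $\fC$-calculus of Mazzeo--Melrose \cite{MazzeoMelrose}: an operator in $\Psi^*_{\fC}$ is Fredholm if and only if both its $\fC$-principal symbol and its normal family are invertible. Since $A^{\eps}\in \Psi^{-\infty}_{\fC}(M/B;E)$ is smoothing, it does not alter the $\fC$-principal symbol of $x\eth_{\fD}$, which is already invertible by ellipticity of $\eth_{\fD}$; the symbolic half of the criterion is therefore automatic.

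For the normal family, additivity together with \eqref{NormalFCOp} and the defining property $\hat{N}(A^{\eps}) = \hat{\rho}_{\eps}A$ yields, at $(q,\xi,\eta) \in T^{*}D^+/B$,
\begin{equation*}
	\hat{N}_{\lambda} := \eth^{V}_{\fD} + i\cl{\lambda} + \hat{\rho}_{\eps}(\lambda)\,A, \qquad \lambda = (\xi,\eta),
\end{equation*}
where $\cl{\lambda}$ denotes Clifford multiplication by the covector $\xi\,\tfrac{dx}{x}+\eta\cdot dy \in T^{*}D^+/B$. The crux of the argument is to square this expression and exploit two anticommutation relations: (i) $\{\eth^{V}_{\fD},\cl{\lambda}\}=0$, since $\lambda$ is a covector complementary to the $\phi$-vertical directions in which $\eth^{V}_{\fD}$ acts; and (ii) $\{A,\cl{\lambda}\}=0$, by the $\Cl(T^{*}D^+/B)$-odd hypothesis on $A$. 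These two identities kill every cross term containing $\cl{\lambda}$ and leave the clean identity
\begin{equation*}
	\hat{N}_{\lambda}^{2} = \bigl(\eth^{V}_{\fD} + \hat{\rho}_{\eps}(\lambda)\,A\bigr)^{2} + |\lambda|^{2}.
\end{equation*}

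Both summands on the right are non-negative self-adjoint operators. For $\lambda\neq 0$ the second is strictly positive, while at $\lambda = 0$ we have $\hat{\rho}_{\eps}(0)=1$ and hence $\hat{N}_{0} = \eth^{V}_{\fD}+A$, which is invertible by hypothesis. Combined with the quadratic growth in $|\lambda|$ at infinity, this produces a uniform symbolic estimate and shows that $\hat{N}_{\lambda}$ is invertible as an element of the relevant suspended calculus. Applying the Mazzeo--Melrose criterion then gives that $x\eth_{\fD}+A^{\eps}$ is Fredholm in $\Psi^{1}_{\fC}(M/B;E)$. The main obstacle is the cross-term cancellation: one must carefully verify how the $\Cl(T^{*}D^+/B)$-odd hypothesis on $A$ interacts with both the vertical Dirac operator and the horizontal Clifford multiplication in the decomposition $\Cl(\FD T^*M/B) \cong \Cl(\FD V^*\pa M/D)\,\hat\otimes\,\Cl(T^{*}D^+/B)$. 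Once this bookkeeping is in hand, the quadratic identity above yields positivity with essentially no further work and the rest is routine.
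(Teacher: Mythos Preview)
Your proof is correct and follows essentially the same approach as the paper: both reduce to invertibility of the normal family, square the indicial operator using the $\Cl(T^{*}D^{+}/B)$-odd hypothesis to obtain $(\eth^{V}_{\fD}+\hat{\rho}_{\eps}A)^{2}+|\lambda|^{2}\Id$, and conclude by positivity. You are slightly more explicit than the paper in separating the principal-symbol and normal-family halves of the Fredholm criterion and in spelling out the positivity argument at $\lambda=0$ versus $\lambda\neq 0$.
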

\begin{proof}
It suffices to check that $\hat{N}(x\eth_{\fD}+ A^{\eps})$ is an invertible family.  But
the normal family of $x\eth_{\fD}$ is given by
\begin{equation}
  \hat{N}(x\eth_{\fD})= \eth^{V}_{\fD}+ \sigma(\eth_{\FC N\pa X})
\label{fp.2}\end{equation}
where $\sigma(\eth_{\FC N\pa X})$ is the symbol of $\eth_{\FC N\pa X}$, that is,
at $\gamma\in T^{*}D^{+}/B$, it is given by $i\cl{\gamma}$ where $\cl{\gamma}$ is the 
corresponding Clifford multiplication.  Since $A$ is $\Cl(T^{*}D^{+}/B)$-odd, we compute 
directly that
\begin{equation}
   \lrpar{\hat{N}(x\eth_{\fD}+A^{\eps})(d,\xi)}^{2}= 
     \lrpar{\eth^{V}_{\fD}+\hat{\rho}_{\eps}A}^{2} + \|\xi\|^{2} \Id_{E},       \label{fp.3}\end{equation}  
for $d\in D$ and $\xi \in T_{d}^{*}D^{+}/B$.
This family is clearly invertible, which implies a fortiori that
$\hat{N}(\eth_{\fD}+A^{\eps})$ is invertible as well. 
\end{proof}
The converse of Lemma~\ref{fp.1} is also true, namely, the existence of a Fredholm perturbation
for $x\eth_{\fD}$ implies the existence of a $\Cl(T^{*}D^{+}/B)$-odd self-ajoint
perturbation $A\in \Psi^{-\infty}(\pa M/D;E)$ such that $\eth^{V}_{\fD}+A$ is invertible.
The proof is however much harder.  Since we do not need it for our purposes, we will omit it. 

The type of large Fredholm perturbations arising in Lemma~\ref{fp.1} is intimately related with the notion of spectral section introduced by Melrose and Piazza in \cite{MelrosePiazza}.  Recall
that a spectral section for $\eth^V_{\fD}$ is a family of self-adjoint projections
$P\in \Psi^0\lrpar{\pa M/D;E}$ with the property that, for $d\in D$,
\begin{equation}\label{SpecSec}
	\lrpar{\eth^V_{\fD}}_d u = \lambda u \implies 
	\begin{cases}
		P_d u=u & \text{ if } \lambda > R\lrpar d \\
		P_d u=0 & \text{ if } \lambda < -R\lrpar d
	\end{cases}
\end{equation}
for some function $R:D \to \bbR^+$.
Melrose and Piazza showed that the existence of a spectral section is equivalent to the vanishing of the families index of $\eth^V_{\fD}$.
Also, in \cite{MelrosePiazza2}, they showed that a more refined statement is true taking into account the action of $\cl{\frac{dx}x}$. We generalize this to allow for the action of $\Cl(T^*D^+/B)$.

Let $P$ be a spectral section and denote the associated involution $\Id-2P$ by $\Pi$. We say that $P$ is a {\bf $\Cl(T^*D^+/B)$ spectral section} if 
\begin{equation}\label{RequiredPi}
	\textstyle
	\Pi \cl{\frac{dx}x} + \cl{\frac{dx}x} \Pi = 0, \quad
	\Pi \cl{\theta} + \cl{\theta} \Pi = 0, \phantom{xx}\theta \in \Gamma(T^*D/B).
\end{equation}
When $A\in \Psi^{-\infty}(\pa M/D:E)$ is a $\Cl(T^{*}D^{+}/B)$-odd self-adjoint perturbation
such that $\eth^{V}_{\fD}+A$ is an invertible family,  then one naturally gets a 
$\Cl(T^{*}D^{+}/B)$ spectral section $P$ defined by
\begin{equation}
      \lrpar{\eth^{V}_{\fD}+A}_{d}u =\lambda u \quad \Longrightarrow
       \left\{ \begin{array}{ll}
         Pu=u, & \text{if}\, \lambda>0, \\
         Pu=0, & \text{if}\, \lambda<0.
       \end{array}\right.  
\label{fp.5}\end{equation}
It is the \textbf{Atiyah-Patodi-Singer spectral section} associated to the invertible
family $\eth^{V}_{\fD}+A$.  Conversely, given a $\Cl(T^{*}D^{+}/B)$ spectral section $P$,
it is possible to construct a $\Cl(T^{*}D^{+}/B)$-odd self-adjoint perturbation
$A_{P}\in \Psi^{-\infty}(\pa M/D;E)$ such that $P$ is the Atiyah-Patodi-Singer spectral section
of $\eth^{V}_{\fD}+A_{P}$. Given $P$, the choice of $A_{P}$ is not unique, but it is at least
unique up to homotopy.  

In \cite{MelrosePiazza}, Melrose and Piazza obtained a family generalization of the 
Atiyah-Patodi-Singer index theorem where the boundary condition was specified by a spectral
section $P$.  Their strategy was to translate this boundary condition into an index problem
for $b$-operators by adding a cylindrical end to the manifold with boundary and by replacing
the spectral section $P$ by a Fredholm perturbation.  In our case, we do not need spectral 
sections since we start directly with a Fredholm perturbation.  Still, to ease the use of results from \cite{MelrosePiazza}, we will use the notation $A_{P}$ for a 
$\Cl(T^{*}D^{+}/B)$-odd self-adjoint perturbation such that $\eth^{V}_{\fD}+A_P$ is invertible,
the subscript $P$ denoting the Atiyah-Patodi-Singer spectral section associated to 
$\eth_{\fD}^{V}+A_{P}$.  We also denote by $A^{\eps}_{P}\in \Psi_{\fC}^{-\infty}(M/B;E)$ a
choice of Fredholm perturbation with normal family given by 
$\hat{N}(A^{\eps}_{P})=\hat{\rho}_{\eps}A_P$.

Given any such $A_P^{\eps}$, we know from \cite[Lemma 1.1]{MelroseRochon} that there is a family of smoothing operators $A'$ with compact support away from the boundary of $M/B$ such that $\dim \ker \lrpar{x\eth_{\fD} + A_P^{\eps} + A'}$ is constant independent of the point in $B$. We will incorporate $A'$ into $A_P^{\eps}$ without reflecting this in the notation. We say that $A_P^\eps$ is a {\bf large smoothing perturbation associated to $P$}. In this context $P$ will always indicate a $\Cl(T^{*}D^{+}/B)$ spectral section.

The existence of a small smoothing perturbation is similar but more direct. 
Again from \cite{MelrosePiazza} there exists a family $Q \in \Psi^{-\infty}(D/B;\cK)$ such that $\eth^{H,\cK}_{\fD} + Q$ is invertible if and only if the families index of $\eth^{H,\cK}_{\fD}$ vanishes, if and only if $\eth^{H,\cK}_{\fD}$ admits a spectral section.
Given a spectral section $R$ for $\eth^{H,\cK}_{\fD}$ there is a family of self-adjoint smoothing operators $Q_R \in \Psi^{-\infty}(D/B;\cK)$ with range in a finite sum of eigenspaces of $\eth^{H,\cK}_{\fD}$ such that $\eth^{H,\cK}_{\fD} + Q_R$ is invertible. These can be extended as above to a family $Q_R^\eps \in \Psi^{-\infty}_b(M/B;\cK)$ with $I_b(Q_R^0)=Q_R$, and such that $\eth_{\fD} + Q_R^\eps \cP_{\cK}$ is Fredholm for small $\eps>0$, and $\dim \ker (\eth_{\fD} + Q_R^\eps)$ is independent of $b \in B$.
We say that $Q_R^\eps$ is a {\bf small smoothing perturbation associated to $R$}.

Finally in the non-perturbed Fredholm case, when $\eth^V_{\fD}$ has constant rank kernel and $\eth^{H,\cK}_{\fD}$ is invertible, it is possible to add a smoothing operator whose kernel is compactly supported away from the boundary and arrange for $\ker \eth_{\fD}$ and $\coker \eth_{\fD}$ to be vector bundles \cite[Proposition 4]{MelrosePiazza}. In any of the three cases, we will assume whenever convenient that such a perturbation has been added so that the index bundle is simply the virtual difference bundle.


\subsection{Conformally related metrics} \label{sec:ConfRelMet} $ $ \newline
It was pointed out by Hitchin that Dirac operators are conformally covariant. What this means is that if a Riemannian manifold $\lrpar{N,g_N}$ has a 
Hermitian Clifford bundle, $E$, with Clifford connection, and $\wt g_N=e^{2\omega}g_N$ is a conformally related metric, then $E$ can be endowed with a $\wt g_N$-Clifford structure and the generalized Dirac operators are related by
\begin{equation*}
	\eth_{\wt g_N} = e^{-\frac{n-1}2\omega}\lrpar{e^{-\omega}\eth_{g_N}}e^{\frac{n-1}2\omega}
\end{equation*}
as operators on, for instance, $C_c^\infty\lrpar N$ (see \cite[Appendix 2]{Vaillant}).

So associated to the family of $\fD$ Dirac operators $\eth_{\fD}$ is a family of $\fC$ Dirac operators, $\eth_{\fC}$ and moreover \cite[Proposition 3.6]{Vaillant}
\begin{equation*}
	N\lrpar{\eth_{\fC}} = N\lrpar{x\eth_{\fD}},
\end{equation*}
though note that $x\eth_{\fD}-\eth_{\fC} = \frac{n-1}2 x\cll_{\fC}(\frac{dx}{x^2}) \neq0$.

Moroianu \cite{Moroianu} applies these considerations to a $\fC$ metric, $g_0$, on a manifold $X$, a conformally related metric $g_p:=x^{2p}g_0$, and the corresponding generalized Dirac operators $\eth_0$, $\eth_p$. By Hitchin's formula 
\begin{equation*}
	\eth_p = x^{-p(n+1)/2}\eth_0x^{p(n-1)/2}.
\end{equation*}
Then it is always true that
\begin{equation*}
	\ker\lrpar{\eth_p} \cong \ker\lrpar{x^{-p} \eth_0 x^{-p}}
\end{equation*}
and if $\eth_0$ is fully elliptic then also
\begin{equation*}
	\ker\lrpar{x^{-p} \eth_0 x^{-p} } \cong \ker{\eth_0},
\end{equation*}
since $\ker(\eth_0) \subseteq \dot \CI(X)$.
For families of operators Moroianu's result extends if we demand that the dimensions of the kernels of the operators be constant.

\begin{lemma}\label{GenMoroianu}
Assume that $g_{\fC}$ is a family of exact $\fC$ metrics on the fibers of $\psi:M\to B$, that $E$ is a compatible Clifford module, $\eth_{\fC}$ is a vertical family of $\fC$ Dirac operators acting on $E$, and $A \in \Psi^{-\infty}_{\fC}\lrpar{M/B;E}$ is such that $\eth_{\fC}+A$ is a Fredholm family whose kernel has the same dimension at every point of $B$.

For every $p \in \lrspar{0,1}$, let $\eth_p$ be the vertical family of Dirac operators acting on $E$ with the Clifford structure compatible with $x^{2p}g_{\fC}$, and 
\begin{equation*}
	A_p = x^{-p(n+1)/2}A x^{p(n-1)/2},
\end{equation*}
then 
$\eth_p + A_p$ is a smooth family of Fredholm operators acting on the space $L^2\lrpar{M/B;x^{2p}g_{\fC}}$
and $\ker \lrpar{\eth_p+A_p}$ is isomorphic to $\ker \lrpar{\eth_{\fC}+A}$ as $\bbZ/2$-graded bundles over $B$. In particular, their families index coincide in K-theory.
\end{lemma}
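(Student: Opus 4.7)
The plan is to adapt Moroianu's single-operator argument to the family setting, using two main ingredients: Hitchin's conformal covariance of Dirac operators, already recalled above, and the standard regularity result that the kernel of a fully elliptic $\fC$-operator is contained in $\dot\CI(M/B;E)$ via the Mazzeo--Melrose parametrix.

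The first step is to apply Hitchin's formula with $\omega = p\log x$, which together with the defining formula for $A_p$ yields the operator identity
\begin{equation*}
    \eth_p + A_p = x^{-p(n+1)/2} \lrpar{ \eth_{\fC} + A } x^{p(n-1)/2}
\end{equation*}
on $\CI(M/B;E)$. This gives, at the distributional level, a linear bijection $v \mapsto x^{-p(n-1)/2} v$ from $\ker(\eth_{\fC} + A)$ to $\ker(\eth_p + A_p)$, and it preserves the $\bbZ/2$-grading because the conjugating factors are scalar functions of $x$ commuting with Clifford multiplication. Since $\eth_{\fC}+A$ is Fredholm in the $\fC$-calculus it is fully elliptic, so its kernel lies in $\dot\CI$, and multiplication by any power of $x$ keeps us inside $\dot\CI \subset L^2(M/B; x^{2p}g_{\fC})$. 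Under the constant-rank hypothesis, $\ker(\eth_{\fC}+A)$ is a smooth finite-rank bundle over $B$, and the map above transports it to a smooth bundle $\ker(\eth_p + A_p)$ of the same rank.

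The second step is to show that $\eth_p + A_p$ is genuinely Fredholm on $L^2(g_p)$ and that its $L^2$-kernel agrees with the distributional kernel just identified. Conjugating by the unitary $\Phi_p : L^2(g_p) \to L^2(g_{\fC})$, $u \mapsto x^{np/2} u$, reduces the question to showing that $x^{-p/2}(\eth_{\fC}+A)x^{-p/2}$ is Fredholm on $L^2(g_{\fC})$. Composing the Mazzeo--Melrose parametrix $Q$ of $\eth_{\fC}+A$ with suitable powers of $x$ produces a parametrix for the conjugated operator whose remainders have the form $x^{p/2} K x^{-p/2}$ with $K$ smoothing in the $\fC$-calculus; these are still compact on $L^2(g_{\fC})$, yielding Fredholmness. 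Regularity of $L^2$-kernel elements then reduces to full ellipticity and provides the inverse to the bijection of Step 1, hence the desired isomorphism of $\bbZ/2$-graded kernel bundles. The cokernel is handled identically via the formal adjoint, smoothness in both $p$ and the base point of $B$ is manifest from the explicit formulas, and equality of the K-theory index classes follows by taking virtual differences.

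The main obstacle I anticipate is the parametrix step: one must verify that conjugation by non-integer powers of the boundary defining function preserves compactness of the Mazzeo--Melrose remainder on $L^2(g_{\fC})$ uniformly in $p \in [0,1]$. Everything else is algebraic bookkeeping built on top of Hitchin's conformal covariance and the regularity theorem for fully elliptic $\fC$-operators.
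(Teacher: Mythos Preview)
Your proposal is correct and follows essentially the same route as the paper: both arguments conjugate by the unitary $u \mapsto x^{np/2}u$ to reduce to $x^{-p/2}(\eth_{\fC}+A)x^{-p/2}$ on $L^2(g_{\fC})$, and then use that full ellipticity forces $\ker(\eth_{\fC}+A)\subset\dot\CI$ so that multiplication by powers of $x$ identifies the kernels. Your explicit parametrix step for Fredholmness is more detailed than what the paper writes down, and your stated concern is easily dispatched: since $\eth_{\fC}+A$ is fully elliptic, the Mazzeo--Melrose remainders lie in the residual class with Schwartz kernels in $\dot\CI$, and conjugation by any real power of $x$ preserves that class, hence compactness on $L^2(g_{\fC})$, uniformly in $p\in[0,1]$.
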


\begin{proof}
As pointed out in \cite{Moroianu},
\begin{equation*}
	L^2\lrpar{M/B;x^{2p}g_{\fC}} \xrightarrow{x^{pn/2}} L^2\lrpar{M/B;g_{\fC}}
\end{equation*}
is an isometry and so any operator
\begin{equation*}
	T:L^2\lrpar{M/B;x^{2p}g_{\fC}} \to L^2\lrpar{M/B;x^{2p}g_{\fC}}
\end{equation*}
is unitarily equivalent to the operator $x^{pn/2} T x^{-pn/2}$ acting on $L^2\lrpar{M/B;g_{\fC}}$.
In particular, $\eth_p+A_p$ acting on $L^2_p$ is unitarily equivalent to 
\begin{gather*}
	x^{pn/2}\lrpar{\eth_p + A_p}x^{-pn/2} 
	= 
	x^{pn/2}\lrpar{ x^{-p(n+1)/2}\lrpar{\eth_{\fC} + A} x^{p(n-1)/2} } x^{-pn/2} \\
	= x^{-p/2}\lrpar{\eth_{\fC} + A} x^{-p/2}
\end{gather*}
acting on $L^2_{\fC}$
and hence these have isomorphic kernels. Sections in the kernel of $\eth_{\fC}+A$ vanish to infinite order at the boundary since this is a fully elliptic family, and so the kernel of 
$x^{-p/2}\lrpar{\eth_{\fC} + A} x^{-p/2}$ is the same as the kernel of $\eth_{\fC}+A$. As this isomorphism preserves the $\bbZ/2$-grading it restricts to isomorphisms of the kernel and cokernel of the chiral Dirac operators.
\end{proof}

{\bf Remark.}
For a family of $\fD$ Dirac-type operators as in \S\ref{sec:Perturbations}, $\eth_{\fD}$, and a large smoothing perturbation $A_P^\eps$, this lemma says that 
\begin{equation*}
	\eth_{\fD} + x^{-(n+1)/2} A_P^\eps x^{(n-1)/2}
\end{equation*}
is a smooth family of Fredholm operators whose index coincides with that of $\eth_{\fC} + A_P^\eps$.
In fact, since commutators of $\fC$ operators with powers of $x$ vanish at the boundary, we can replace this family with
\begin{equation}\label{fDPerturbation}
	\eth_{\fD} + \frac1x A_P^\eps.
\end{equation}

\section{The $\fD$ Bismut superconnection and its rescaling}

In this section we describe more of the geometry of \eqref{fullfib}, in particular the boundary fibrations. We use this to construct the Bismut superconnection adapted to the vertical Dirac-type operators $\eth_{\fD}$.
We recall that in \S\ref{sec:FamiliesDirac} we have chosen a connection on $\psi$ in the form of a splitting
\begin{equation*}
	\FD TM = \FD TM/B \oplus T_HM
\end{equation*}
wherein $T_HM$ can be identified with $\psi^*TB$.

\subsection{The $\fD$ Bismut superconnection} \label{sec:Bismut} $ $\newline
The family of metrics $g_{\fD}$ extends trivially to a degenerate metric on $TM$ by
\begin{equation}\label{DegMetric}
	g_0(V,W) := g_{\fD}(\vpsi V, \vpsi W).
\end{equation}
Alternately we can choose a metric on $B$, $g_B$, and a parameter $\eps>0$ and define
\begin{equation}\label{geps}
	g_{M}(\eps) := g_{\fD} + \frac1\eps \psi^*g_B.
\end{equation}
A useful property of $\fD$ metrics is that $g_M(\eps)$ is itself a $\fD$ metric on $M$. Notice that the dual metric on $T^*M$,
$g^M(\eps) = g^{\fD} + \eps g^B$, tends as $\eps\to0$ to the dual metric to $g_0$.

As in \S\ref{sec:BhvCusps} the behavior of the Levi-Civita connection of $g_M(\eps)$ is described 
by the two tensors
\begin{gather*}
	\Sec{\psi} \in \CI\lrpar{M; \FD T^*\lrpar{M/B} \otimes \FD T^*\lrpar{M/B} \otimes T^*_HM} \\
	\Curv{\psi} \in \CI\lrpar{M; T_H^*M \otimes T_H^*M \otimes \FD T^*\lrpar{M/B}} \\
\end{gather*}
defined as in \eqref{DefSec} and \eqref{DefCurv} respectively.
It is well-known that these tensors only depend on $g_0$, and indeed we have
\begin{gather}\label{SecPsi}
	2\Sec{\psi}\lrpar{V_1,V_2}\lrpar H 
	= H\ang{V_1,V_2}_0 - \ang{\lrspar{H,V_1},V_2}_0 
		- \ang{\lrspar{H,V_2},V_1}_0 
	\\ \label{CurvPsi}
	\Curv\psi\lrpar{H_1,H_2}\lrpar V
	= \ang{\lrspar{H_1,H_2},V}_0.
\end{gather}
We extend these trivially to $\Gamma( (\FD T^*M)^3 )$ by
\begin{gather*}
	\Sec\psi(W_1, W_2)(W_3) := \Sec\psi(\vpsi W_1, \vpsi W_2)(\hpsi W_3),\\
	\Curv\psi(W_1, W_2)(W_3) := \Curv\psi(\hpsi W_1, \hpsi W_2)(\vpsi W_3),
\end{gather*}
and then put them together to form $\omega_\psi\in \CI( M; \FD T^*M \otimes \Lambda^2(\FD T^*M) )$
\begin{multline}\label{OmegaPsi}
	\omega_\psi\lrpar X \lrpar{Y,Z}
	= S_\psi\lrpar{X,Z}\lrpar Y - S_\psi\lrpar{X,Y}\lrpar Z \\
	+\frac12\Omega_\psi\lrpar{X,Z}\lrpar Y
	-\frac12\Omega_\psi\lrpar{X,Y}\lrpar Z
	+\frac12\Omega_\psi\lrpar{Y,Z}\lrpar X.
\end{multline}

Let $\nabla^{\oplus}:= \nabla^{\fD} \oplus \psi^*\nabla^B$ (with respect to the connection on $\FD TM$ above) and let $\nabla^\eps$ be the Levi-Civita connection associated to $g^\eps$. 
The limit as $\eps \to 0$ is also a connection, $\nabla^0$, and is related to $\nabla^\oplus$ by
\begin{equation*}
	\nabla^0 = \nabla^{\oplus} + \frac12 \tau(\omega_\psi)
\end{equation*}
where
\begin{equation*}
	\tau: \Lambda^2 (\FD T^*M) \to \hom(\FD T^*M),
	\quad
	\tau(\xi\wedge\eta)\mu = 2[ g_0(\xi,\mu)\eta - g_0(\eta, \mu)\xi ].
\end{equation*}
The connection $\nabla^0$ induces a connection on the Clifford algebra associated to $g_0$,
\begin{equation}\label{tensor}
	\Cl_0\lrpar M = \psi^*\Lambda^*B \otimes\Cl_\psi\lrpar M.
\end{equation}

Let $E$ be a $\fD$ Clifford module over the fibers of $\psi$. We can extend the original fibre connection on $E$ to a full connection (again denoted $\nabla^E$) that is unitary and consistent with the Clifford action by $\Cl_\psi(M)$ (see \cite[\S 9]{MelrosePiazza}).
We can extend $E$ to the superbundle
\begin{equation*}
	\bbE = \psi^*\Lambda^*B\otimes E,
\end{equation*}
which is a Clifford module with respect to \eqref{DegMetric} and \eqref{tensor} acting via
\begin{equation}\label{DegClif}
	m_0\lrpar \alpha
	= \df e\lrpar{ \hpsi \alpha} + \cl{\vpsi \alpha}
\end{equation}
with the connection 
\begin{equation}\label{NablaE0}
	\nabla^{\bbE,0}
	=\psi^*\lrpar{\nabla^B} \oplus \nabla^E+\frac12m_0\lrpar{\omega_\psi}.
\end{equation}

The connection $\nabla^{\bbE,0}$ is a Clifford connection with respect to $\nabla^{M,0}$, so we will need to know about the curvature of the latter.
Denote by $R^{M/B}$ the curvature of $\nabla^{\fD}$.  From the choice of metric \eqref{geps}, it is easy to see that, for any $X$, $Y$ in $\Gamma(TM)$,
\begin{equation*}
	\vpsi R^{M,\eps}\lrpar{X,Y} \vpsi = R^{M/B}\lrpar{X,Y}, \quad
	\hpsi R^{M,\eps}\lrpar{X,Y} \hpsi = \psi^*R^B\lrpar{X,Y}
\end{equation*}
and both of these tensors are independent of $\eps$. 
It is convenient to extend these tensors trivially to sections of 
 $\Lambda^2T^*M \otimes \Lambda^2T^*M$ by defining
\begin{equation*}
	R^{M/B}\lrpar{W,X}\lrpar{Y,Z} := 
	R^{M/B}\lrpar{W,X}\lrpar{\vpsi Y, \vpsi Z},
\end{equation*}
then Proposition 10.9 of \cite{BGV} shows that the tensor
\begin{equation*}
	\lim_{\eps\to0} \lrpar{ R^{M,\eps}\lrpar{W,X}\lrpar{Y,Z}
		-\frac1\eps \psi^*R^B\lrpar{W,X}\lrpar{Y,Z} }
\end{equation*}
is a well-defined element of $\Lambda^2T^*M\otimes\Lambda^2T^*M$ with all of the symmetries of a curvature tensor and equal to $R^{M/B}$ when $Y$ and $Z$ are $\psi$-vertical.
Thus
\begin{equation*}
	R^{M,0}\lrpar{T_1,T_2}\lrpar{T_3,T_4}
	= \lim_{\eps\to0}
	R^{M,\eps}\lrpar{T_1,T_2}\lrpar{T_3,T_4}
\end{equation*}
is well-defined whenever $\h\psi T_i=0$ for some $i$ (as it will in all the cases we consider).
Finally we point out that Lemma \ref{ConnectionAsymp} holds for $R^{M,\eps}$ and hence for $R^{M,0}$.

The Dirac operator on $\bbE$ associated to the degenerate Clifford action $m_0$ and the Clifford connection $\nabla^{\bbE,0}$
is known as the {\bf Bismut superconnection} and denoted $\bbA$. Its action on $\CI\lrpar{M;E}$ is given by
\begin{equation*}
	\bbA=\bbA_{[0]}+\bbA_{[1]}+\bbA_{[2]}
\end{equation*}
with $\bbA_{[j]}:\CI\lrpar{M;E}\to\CI\lrpar{M;\psi^*\Lambda^jB\otimes E}$ and $\bbA_{[0]}=\eth^E_{\fD}$ the family of Dirac operators associated to $\lrpar{E,\nabla^E}$.
More explicitly,
\begin{equation*}\begin{split}
	\bbA 
	&=\cll^i\nabla_i^{\bbE,\oplus}+\df e^\alpha\nabla_\alpha^{\bbE,\oplus} 
		+ \frac14\lrpar{\omega_\psi}_{abc}m_0^am_0^bm_0^c\\
	&=\cll^i\nabla^{E}_i+\df e^\alpha\lrpar{\nabla_\alpha^E+\frac12k_\psi\lrpar{f_\alpha}}
	-\frac14\sum_{\alpha<\beta}\Curv\psi\lrpar{f_\alpha,f_\beta}\lrpar{e_i}\df e^\alpha \df e^\beta \cll^i
\end{split}\end{equation*}
where $\{ f_\alpha\}$, $\{ e_i \}$ are orthonormal frames for $TB$ and $\FD T\lrpar{M/B}$ respectively, $\df e$ denotes exterior multiplication, and $k_\psi$ is the trace of the second fundamental form tensor $\Sec\psi$ in \eqref{SecPsi}.

More generally,
a {\bf $\fD$-superconnection} on $\bbE$ is a $\fD$ differential operator
\begin{equation*}
	\CI\lrpar{M;\bbE} \xrightarrow{A} \CI\lrpar{M;\bbE}
\end{equation*}
of odd parity such that, for any $k$-form $\alpha$ on $B$ and any section $u$ of $\bbE$,
\begin{equation*}
	A\lrpar{\lrpar{\phi^*\alpha}u}
	= \lrpar{\phi^*\lrpar{d\alpha}}u + \lrpar{-1}^k\lrpar{\phi^*\alpha}A u.
\end{equation*}
$A$ is said to extend $A_{[1]}$ and to be adapted to $A_{[0]}$.

The square of the $\fD$ Bismut superconnection satisfies a Lichnerowicz formula,
\begin{equation}\label{Lich}
	\bbA^2 = \Delta_{\mathrm{Bochner}}^{M/B}+\frac14\mathrm{scal}_{M/B} -\frac12K_E'\lrpar{e_a,e_b}m_0^am_0^b.
\end{equation}
Here $\Delta_{\mathrm{Bochner}}^{M/B}$ is the Bochner or connection Laplacian associated to the connection $\nabla^{\bbE,0}$, and as such is a family of second order elliptic $\fD$ differential operators; $\mathrm{scal}_{M/B}$ is the scalar curvature of the fibers (i.e., of the family of exact $\fD$ metrics $g_{M/B}$) and $K_E'$ is the twisting curvature of the Clifford module $E$. 

We now consider the restriction to the boundary of $\bbA$. At the boundary we have three related fibrations 
\begin{equation*}
	\xymatrix{ 
	Z \ar@{-}[r] & \pa M \ar[d]_{\wt \phi} \ar@/^2pc/[dd]^{\pa \psi}\\
	Y \ar@{-}[r] & D \ar[d]_{\psi_D} \\
		& B }
\end{equation*}
and we have chosen connections that identify $\FD TM\rest{\pa M}$ with
\begin{equation*}
	 T\pa M/D \oplus TD^+/B \oplus TB.
\end{equation*}
The tensor $\Sec\psi$ is given, for any $\eps>0$, by
\begin{equation*}
	\Sec\psi(\v\psi A, \v\psi B)(\h\psi C)
	= g_M(\eps)( \nabla_{\v\psi A}^{\fD,\eps} \h\psi C, \v\psi B),
\end{equation*}
and since $\nabla^{\fD,\eps}$ satisfies \eqref{PreserveHor} this vanishes at the boundary whenever $B$ is $\wt \phi$-vertical (hence whenever $A$ is $\wt\phi$-vertical). It follows that
\begin{equation*}
	\Sec\psi \rest{\pa M} = \Sec{\psi_D}.
\end{equation*}
Similarly from \eqref{CurvfD=x} we see that
\begin{equation*}
	\Curv\psi \rest{\pa M} = \Curv{\psi_D}.
\end{equation*}
Thus, when we restrict to the boundary,  all  the information about $\wt\phi$
is lost. In the next section we explain how a rescaling of the bundles (as in \S\ref{sec:HeatKerOne}) allows us to recover this information for the `rescaled normal operator'.

Finally, we point out that Vaillant's description of $\eth^{H,\cK}_{\fD}$ (\S\ref{sec:DiracTypeOps}) extends to the families context.
Indeed, Vaillant obtains the geometrical description of $\eth^{H,\cK}_{\fD}$ through the identity \eqref{HorizontalComp} 
which is still valid with $\nabla^{\bbE,0}$ and $\nabla^{M,0}$.
To be more explicit, from above we know that $\omega_\psi \rest{\pa M} = \omega_{\psi_D}$ and hence $\nabla^{M,0} \rest{\pa M} = \nabla^{D,0}$.
Also, using $g_M(\eps)$ we can define $\Sec{\wt\phi-hc}$, $\Sec{\wt\phi}$, $\cB_{\wt\phi-hc}$, and $\Curv{\wt\phi}$ as above, and these satisfy \eqref{TwoSecs'}, \eqref{SecPsi}, \eqref{CurvPsi} and so are independent of $\eps$. 
Then, denoting the trace of $\Sec{\wt\phi}$ by $k_{\wt\phi}$, we define a superconnection by
\begin{equation}\label{HorizontalSupCon}
	\bbB_{\psi_D^+}
	=\textstyle \cl{\frac{dx}x} \displaystyle x\pa_x + \bbB_{\psi_D}
	=m_0^{\psi_D} \cP_{\cK}
	\lrpar{\nabla^{\bbE,0}  + \frac12 k_{\wt\phi} },
\end{equation}
and from \eqref{HorizontalComp} we find
\begin{equation*}
	\bbA_{\fD}\rest{\cK, \pa M} = \bbB_{\psi_D} + \cJ_{\wt\phi}.
\end{equation*}
Here the endomorphism $\cJ_{\wt\phi}$ is given by
\begin{multline*}
	\cP_\cK\left(
	\v\phi\cl{K^{E'}\lrpar{dx^\flat,\cdot}} 
	+\frac12\v\phi \cll_{\fD}\lrpar{\Ric^V_{M/B}\lrpar{dx^\flat,\cdot}}
	\right. \\ \left.
	+\frac14\cll_{\fD}\lrpar{ \cB_{\wt\phi-hc}\lrpar{x\cdot,x\cdot}\lrpar{\cdot}} 
	-{\v\phi\cl{\bar\theta^i}}\nabla^{\bbE,0}_{x\nabla^{\fC}_{\bar X_i} dx^\flat}
	\right)
\end{multline*}
with $\Ric^V_{M/B}$ the $\phi$-vertical contraction of the $\psi$-vertical curvature $R^{M/B}$.
As before, the last three terms in $\cJ_{\wt\phi}$ vanish for product-type metrics and the first term vanishes if $M$ is spin and the twisting bundle has product-type metric at the boundary.
We point out that $\cJ_{\wt\phi}$ does not involve any exterior products (essentially because $\cB_{\wt\phi-hc}$ vanishes on elements of $TB$ for any $\eps>0$).

\subsection{The rescaled normal operator} \label{sec:ResDouble} $ $ \newline
As we have mentioned, differential $\fC$ operators (say on $X$) were defined in \cite{MazzeoMelrose} as well as a larger $\fC$ pseudo-differential calculus.
Pseudo-differential $\fC$ operators are defined by describing their integral kernels. This can be done on the space $X^2$, however there is a more complicated space $X^2_{\fC}$ with a natural map $X^2_{\fC}\to X^2$ such that the integral kernels pulled-back to $X^2_{\fC}$ have a much simpler description than on $X^2$.

The space $X^2_{\fC}$ is obtained from $X^2$ by blowing-up the submanifolds where the kernels are complicated. Here blow-up (radial blow-up in \S\ref{sec:HeatKerOne}) refers to the process of replacing a submanifold of the boundary of $X^2$ by its inward-pointing spherical normal bundle - as described for instance in \cite{APS Book}. The simplest example is to introduce polar coordinates around the origin in $\bbR_x\times\bbR_{x'}$. Polar coordinates take values in the space $\bbR^+\times\bbS^1$ obtained from $\bbR^2$ by replacing the origin with $\{0\}\times \bbS^1$. A function like $\arctan\lrpar{x'/x}$, which is singular on $\bbR^2$ lifts to a smooth function on $\bbR^+\times\bbS^1$. The notation used for this is
\begin{equation*}
	\lrspar{\bbR^2; \lrpar{0,0} }
\end{equation*}
where we indicate the original manifold and the submanifold that is blown-up.

For a vertical family of $\fC$ pseudo-differential operators the space we need is
\begin{equation}\label{M2Phi}
	M^2_{\fC,\psi}:=\lrspar{ M\btimes_B M; \pa M \btimes_B \pa M; \pa M \btimes_D \pa M},
\end{equation}
where the notation indicates that we start with the fibered product of $M$ with itself over $B$,
\begin{equation*}
	M \btimes_B M = \lrbrac{ \lrpar{m,m'}\in M^2: \psi\lrpar m = \psi\lrpar{ m' } },
\end{equation*}
then we blow-up (introduce polar-coordinates around) the submanifold \newline $\pa M\btimes_B\pa M$, and then we blow-up the lift of the sub-manifold $\pa M \btimes_D \pa M$.
A theorem of Melrose guarantees that the result is still a manifold with corners (because we have only blown-up `p-submanifolds', see \cite{Melrose:Corners}).
The blow-up construction comes with a natural blow-down map which we denote 
\begin{equation*}
	M^2_{\fC,\psi}\xrightarrow{\beta_\psi} M\btimes_B M.
\end{equation*}

Integral kernels of pseudo-differential operators on closed manifolds have a conormal singularity at the diagonal (i.e., their transverse Fourier transform is a symbol). For $\fC$ pseudo-differential operators this is true at $\diag_{\fC,\psi}$, the closure of the lift of the diagonal from the interior of $M\btimes_\psi M$ to the interior of $M^2_{\fC}$. An advantage of the blown-up space is that $\diag_{\fC,\psi}$ does not hit the boundary at a corner, but rather at the interior of the `front face', the boundary face produced by the second blow-up in \eqref{M2Phi}. We will denote this face by $\bhs{\phi\phi}$.

We can now define {\bf vertical $\fC$ pseudo-differential operators} of order $k$ acting between sections of two bundles $F^1$ and $F^2$, 
\begin{equation}\label{FCPseudo}
	\Psi^k_{\fC}\lrpar{M/B;F^1,F^2}
\end{equation}
by describing their integral kernels.
These are distributions on $M^2_{\fC,\psi}$, valued in $\beta_\psi^*\Hom\lrpar{F^1,F^2}$,
that are smooth away from the boundary and the diagonal, have a conormal singularity at $\diag_{\fC,\psi}$ of order $k$ and have an asymptotic expansion at $\bhs{\phi\phi}$ in powers of a boundary defining function for that face.
Recall that the bundle $\Hom\lrpar{F^1,F^2}$ has fiber over the point $\lrpar{\zeta,\zeta'}$ given by 
$\Hom\lrpar{ F^1_\zeta, F^2_{\zeta'}}$.

{\bf Remark.}  It is convenient for the integral kernels to take values in half-density bundles, but for simplicity we do not discuss this here.

We are interested in the case where $F^1=F^2=\bbE$, so the distributions will be valued in $\beta_\psi^*\Hom\lrpar E \otimes \psi^*\Lambda^*B$, which we denote $\Hom\lrpar{\bbE}$.
In this case, we can refine the pseudo-differential operators by demanding that the $k^{\text{th}}$ coefficient in the expansion at the front face $\bhs{\phi\phi}$ has Clifford degree at most $k$ with respect to the Clifford algebra of
\begin{equation*}
	 T^*D^+/B : = {}^b T^*(D\times\bbR^+)/B \rest D.
\end{equation*}
This is a Getzler rescaling argument, but applied to the whole calculus of pseudo-differential operators. This technique was introduced in \cite[Chapter 8]{APS Book} which we follow closely.

Since $\bhs{\phi\phi}$ lies over the fibered diagonal of the boundary, and $E$ is a Clifford module, we have 
\begin{equation*}
	\Hom\lrpar E \rest{\phi\phi}
	\cong \Cl^*\lrpar{T^*D^+/B} \otimes \Hom'\lrpar{E},
\end{equation*}
where $\Hom'\lrpar{E}$ denotes homomorphisms that (anti)commute with Clifford multiplication by elements of $\Cl^*\lrpar{T^*D^+/B}$,
thus the restriction of $\Hom\lrpar{\bbE}$ to $\bhs{\phi\phi}$ admits a filtration by Clifford degree.

We define for $k\leq b+h+1$ (where $b=\dim B$ and $h=\dim D/B$)
\begin{equation}\label{FiltPhi}
	\Cl^k_{\phi\phi} := 
	\underset{i+j\leq k}\oplus \psi^*\Lambda^iB \otimes \Cl^j\lrpar{T^*D^+/B} \otimes \Hom'\lrpar E 
\end{equation}
Pick a vector field $\nu$ on $M^2_{\fC,\psi}$, transverse to $\bhs{\phi\phi}$ such that $\nu x=1$ and use the connection on $\Hom\lrpar{\bbE}$ induced by $\nabla^{\bbE,0}$,
\begin{equation*}
	\nabla^{\Hom\lrpar{\bbE}}
	:= \beta^*_L\nabla^{\bbE,0} \otimes \beta_R^*\nabla^{\bbE',0},
\end{equation*}
(so that $\nabla^{\Hom\lrpar{\bbE}} A = \lrspar{\nabla^{\bbE,0}, A}$)
to define
\begin{equation*}
	\cD = \lrbrac{u\in \CI\lrpar{X^2_{\fC}, \psi^*\Lambda^*B \otimes \beta^*\Hom\lrpar{E}} 
		: \lrpar{\nabla^{\Hom\lrpar{\bbE}}_\nu}^ku\rest{\bhs{\phi\phi}} \in \Cl^k_{\phi\phi} }.
\end{equation*}
As in \cite{APS Book} we can identify $\cD$ with the sections of a bundle $\Gr$ over $M^2_{\fC,\psi}$. The connection $\nabla^{\Hom\lrpar{\bbE}}$ has the following properties with respect to the filtration \eqref{FiltPhi}.

\begin{lemma}\label{ConnectionAction} $ $
\begin{itemize}
	\item [i)] The connection induced on the boundary preserves the filtration.
	\item [ii)] If $W$ is a vector field on $M^2_{\fC,\psi}$ tangent to all of the boundary hypersurfaces then
	\begin{equation*}
		\lrpar{\nabla^{\Hom\lrpar{\bbE}}_\nu}^p K^{\Hom\lrpar{\bbE}}\lrpar{\nu,W}\rest{\bhs{\phi\phi}} 
			: \Cl^k_{\phi\phi} \to \Cl^{k+2}_{\phi\phi}
	\end{equation*}
	\item [iii)] If $U \in \beta^*_{\psi,L} \Gamma(\FC TM/B)$ then
	\begin{equation*}
		K^{\Hom\lrpar{\bbE}}\lrpar{\nu,U}\rest{\bhs{\phi\phi}} 
			: \Cl^k_{\phi\phi} \to \Cl^{k+1}_{\phi\phi}
	\end{equation*}
\end{itemize}
\end{lemma}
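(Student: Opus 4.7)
The proof of Lemma~\ref{ConnectionAction} is a Getzler-rescaling computation in the spirit of Chapter~8 of \cite{APS Book} and~$\S 5$ of \cite{Vaillant}, adapted to the families setting in which the curvature of the Bismut-type connection $\nabla^{\bbE,0}$ replaces the curvature of an ordinary Clifford connection. The three ingredients I would use are the Lichnerowicz-type decomposition
\begin{equation*}
	K^{\bbE}(V,W) = \tfrac{1}{2}\, m_0\!\lrpar{R^{M,0}(V,W)} + K_{\bbE}'(V,W),
\end{equation*}
the Clifford-connection identity $[\nabla^{\bbE,0}_X, m_0(\alpha)] = m_0(\nabla^{M,0}_X \alpha)$ for $\alpha \in \Gamma(\FD T^*M)$, and the boundary asymptotics of $R^{M,0}$ which follow from Lemma~\ref{ConnectionAsymp} applied to $R^{M,\eps}$ and then passing to the limit $\eps \to 0$ (as justified in the discussion preceding \eqref{Lich}).

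Part (i) is essentially formal. Since the commutator action of $\nabla^{\bbE,0}$ on $\Hom(\bbE)$ distributes over $m_0$, it preserves the filtration by Clifford degree. At $\bhs{\phi\phi}$, the Clifford action by fiber-vertical 1-forms has been absorbed into $\Hom'(E)$ in the identification $\Hom(\bbE)\rest{\bhs{\phi\phi}}\cong \psi^*\Lambda^*B\otimes \Cl^*(T^*D^+/B)\otimes\Hom'(E)$, so only the $T^*D^+/B$ part of the Clifford structure is relevant for the filtration; the induced boundary connection is a Clifford connection with respect to precisely this structure and so preserves $\Cl^k_{\phi\phi}$.

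For (ii) and (iii), the twisting-curvature term $[K_{\bbE}', \cdot\,]$ commutes with Clifford multiplication and hence preserves the filtration automatically. The analysis therefore reduces to the leading-order behaviour of $[m_0(R^{M,0}(\nu,W)), \cdot\,]$. A direct computation shows that for any 2-form $\omega \in \Lambda^2(\FD T^*M)$, the operator $[m_0(\omega),\cdot\,]$ raises the combined filtration degree of $\Cl^k_{\phi\phi}$ by at most two. Iterating the normal derivative $\nabla_\nu^{\Hom(\bbE)}$ corresponds to Taylor-expanding $R^{M,0}(\nu,W)$ along the normal to $\bhs{\phi\phi}$, and Lemma~\ref{ConnectionAsymp} supplies the precise form of the leading terms. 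When $W$ is tangent to every boundary hypersurface, these leading 2-forms lie in the subspace whose image under $m_0$ achieves exactly the $+2$ raising asserted in (ii); when $W$ is only a $\fC$-vector field $U$ as in (iii), the additional transverse component (vanishing only to second order at $\pa M$ by the definition of $\cV_{\fC}$) contributes a term whose $m_0$ raises the filtration by only one, giving (iii).

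The principal obstacle is combinatorial bookkeeping: the filtration $\Cl^k_{\phi\phi}$ mixes an exterior degree on $\psi^*\Lambda^*B$ with a Clifford degree on $\Cl^*(T^*D^+/B)$, and one must carefully track how each of the three types of component of $R^{M,0}$ (pure $B$-base, mixed base-fiber, and pure $\FD T^*M/B$) interacts with $m_0$. One also has to verify that the normal derivative $\nabla_\nu$ — transverse to $\bhs{\phi\phi}$ rather than to $\pa M$ directly — translates properly under the blow-down $\beta_\psi : M^2_{\fC,\psi} \to M \btimes_B M$ into the kind of normal derivative $N$ appearing in Lemma~\ref{ConnectionAsymp}, so that the required asymptotics actually apply. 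Once this translation is in place, each of the three parts follows by the same strategy as in the single-operator case.
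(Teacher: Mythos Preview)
Your overall strategy matches the paper's: use that $\nabla^{\bbE,0}$ is a Clifford connection for (i), and for (ii)--(iii) split the curvature into a Clifford-algebra piece and a twisting piece, with the latter preserving the filtration and the former raising it by at most two. Part (i) is fine.

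For (ii) you bring in more than is needed. The paper's argument is simply that at $\bhs{\phi\phi}$ the curvature splits as $K^{\Cl_0(D)}\otimes\Id + \Id\otimes K^{\Hom'(\bbE)}$; the second term preserves the filtration, and the first acts by Clifford multiplication by a 2-form (formula \eqref{KCl}), which raises the degree by at most two---as do its normal covariant derivatives, since those are again Clifford multiplication by 2-forms. There is no need for Lemma~\ref{ConnectionAsymp} or any Taylor expansion here; those asymptotics are used later, in Proposition~\ref{RescaledNormalOp}, to compute the actual \emph{value} of the rescaled normal operator, not to establish the filtration bound. (A minor side remark: the action on $\Hom(\bbE)$ is the tensor-product curvature $\beta_L^*K^{\bbE}\otimes\Id + \Id\otimes\beta_R^*K^{\bbE^*}$, not literally a commutator $[K^{\bbE},\cdot\,]$, since the left and right fibre points differ on $\bhs{\phi\phi}$; this does not affect the degree count.)

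Your account of (iii) contains a genuine confusion. You write that ``when $W$ is only a $\fC$-vector field $U$'' an ``additional transverse component (vanishing only to second order at $\pa M$)'' gives the improvement to $+1$. But a lifted $\fC$ vector field is \emph{more} constrained than a general $W$ tangent to all faces, not less, and $\fC$ vector fields are tangent to $\pa M$---there is no transverse component at all. The actual mechanism is the one the paper states: since $\cV_{\fC}(M) = x\cdot\cV_{\fD}(M)$, any $U\in\Gamma(\FC TM/B)$ carries an overall extra factor of $x$ compared to a $\fD$ vector field. The curvature being tensorial in $U$, the piece of $K^{\Hom(\bbE)}(\nu,U)$ that would raise the filtration by two acquires this extra $x$ and therefore vanishes upon restriction to $\bhs{\phi\phi}$, leaving only the $+1$ contribution. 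Once you replace your ``transverse component'' paragraph with this observation, the argument goes through.
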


\begin{proof}
The first item is true because $\nabla^{\bbE,0}$ is a Clifford connection with respect to $\nabla^{M,0}$ and the Clifford algebra $\Cl_0\lrpar M$, so it preserves the degree in $\Cl_0\lrpar{D}:=\Lambda^*B \otimes \Cl\lrpar{TD^+/B}$.

The second item is true because at $\bhs{\phi\phi}$ the splitting of the $\Hom$ bundle induces a splitting of the curvature into 
\begin{equation*}
	K^{\Cl_0\lrpar D} \otimes \Id + \Id \otimes K^{\Hom'\lrpar{\bbE}},
\end{equation*}
the second summand does not change the filtration nor do its normal covariant derivatives, while the first summand acts by Clifford multiplication by two-forms
\begin{equation}\label{KCl}
	K^{\Cl_0\lrpar D}\lrpar{\nu,W}
	=\frac14 \lrspar{ \beta_L^*\cl{R^{D^+,0}\lrpar{\nu,W}} -
	 \beta^*_R\cl{R^{D^+,0}\lrpar{\nu,W}}}
\end{equation}
and hence so do its normal covariant derivatives.

Finally from \eqref{KCl} we see that, although $K^{\Hom\lrpar{\bbE}}\lrpar{\nu,U}$ could move the filtration by two, the extra degree of vanishing of $U$ at $\bhs{\phi\phi}$  (because $U \in \Gamma(\FC TM/B)$ instead of $\Gamma(\FD TM/B)$) implies that it only moves it by one.
\end{proof}

An element of $\cD$ has an expansion at $\bhs{\phi\phi}$
\begin{equation*}
	u \sim u_0 + xu_1 + \ldots + x^{h+b+1}u_{h+b+1} + \cO\lrpar{x^{h+b+2}}
\end{equation*}
with $u_j$ satisfying
\begin{equation}\label{Taylor}
	u_j = \frac1{j!}\lrpar{\nabla_\nu^{\Hom\lrpar{\bbE}}}^p u\rest{\bhs{\phi\phi}} 
		\in \CI\lrpar{\bhs{\phi\phi},\Cl^j_{\phi\phi}}.
\end{equation}
The restriction map to $\bhs{\phi\phi}$ as elements of $\cD$ can be identified with the projection to $\cD / x\cD$, so it is equal to
\begin{equation}\label{Restriction}
	u \mapsto \lrpar{ u_0, \lrspar{u_1}, \ldots, \lrspar{u_{h+b+1}}}
\end{equation}
with $\lrspar{u_j}$ representing the class of $u_j$ in $\Cl_0^k(D)/\Cl_0^{k-1}(D)$. The map \eqref{Restriction} is the difference between the {\bf rescaled normal operator} and the usual normal operator (which would just take $u_0$). The exterior algebra is the graded algebra associated to the Clifford filtration, i.e., for any vector space $V$, we have $\Cl^k\lrpar V/\Cl^{k-1}\lrpar V \cong \Lambda^kV$. Thus the $k^{\text{th}}$ term in the restriction map can be naturally identified with an element of $\Lambda^kT^*D^+$ and the restriction itself as taking values in $\Lambda^*T^*D^+\rest D$.

According to \cite[Lemma 8.10]{APS Book}, Lemma \ref{ConnectionAction} implies that vector fields $U \in \FC TM/B$ act on $\cD$,
\begin{equation*}
	\nabla^{\Hom\lrpar{\bbE}}_{\beta^*_{L,\psi}U} : \cD \to \cD.
\end{equation*}
One can think of this as defining a partial connection on $\Gr$. What is more, \cite[Proposition 8.12]{APS Book} gives a formula for the action of $\nabla^{\Hom\lrpar{\bbE}}_{\beta^*_{L,\psi}U}$:
\begin{multline}\label{RescaledAction}
	\lrspar{\nabla^{\Hom\lrpar{\bbE}}_{\beta^*_LU}u}_j
	= \lrspar{\nabla^{\Hom\lrpar{\bbE}}_{\beta^*_LU}\rest{\bhs{\phi\phi}}u_j 
		+ \lrpar{h+b+1-j}n_Uu_j}_j \\
	+\lrspar{K^{\Hom\lrpar{\bbE}}\lrpar{\nu,U}\rest{\bhs{\phi\phi}}u_j}_{j-1} \\
	+\lrspar{\frac12\nabla^{\Hom\lrpar{\bbE}}_\nu K^{\Hom\lrpar{\bbE}}\lrpar{\nu,U}\rest{\bhs{\phi\phi}}u_j}_{j-2},
\end{multline}
where $n_U$ is given by $\nu Ux\rest{\bhs{\phi\phi}}$. Note that if $U \in \FC TM/B$ then $Ux$ vanishes to second order at the boundary and hence $n_U=0$.

We define the {\bf rescaled $\fC$ pseudo-differential operators}
\begin{equation}\label{GFCPseudo}
	\Psi^k_{\fC,G}\lrpar{M/B;\bbE}
\end{equation}
by replacing $\beta_\psi^*\Hom\lrpar{\bbE}$ in the description of \eqref{FCPseudo} with $\Gr$. The rescaled normal operator is a surjective map
\begin{equation}\label{RescaledNormal}
	\Psi^k_{\fC,G}\lrpar{M/B;\Lambda^*B \otimes E} \xrightarrow{N^G_{\phi\phi}}
	\Psi^k_{sus}\lrpar{\FC N\pa M/B; \Lambda^*_xD^+ \otimes E}
\end{equation}
with null space $x\Psi^k_{\fC,G}\lrpar{M/B;\Lambda^*B \otimes E}$.

Before computing the rescaled normal operator of $\nabla^{\bbE,0}$ we establish some notation. 
Given a vector field $V \in \Gamma(\FD TM/B)$ we denote by $V^\sharp$ the one form dual to $V$ with respect to $g_{\fD}$. 
We denote by $m_0^{\wt\phi}$ the degenerate Clifford action, defined on $\alpha\in\Gamma(T^*M)$ by
\begin{equation*}
	m_0^{\wt\phi}(\alpha) = \df e(\h{\wt\phi}\alpha) + \cl{\v{\wt\phi}\alpha}.
\end{equation*}
Recall (from the end of \S\ref{sec:Bismut}) the tensors $\Sec{\wt\phi-hc}$, $\Curv{\wt\phi}$, and $\cB_{\wt\phi-hc}$. We define $\omega_{\wt\phi-hc}$ by substituting $\Sec{\wt\phi-hc}$ and $\Curv{\wt\phi}$ into formula \eqref{OmegaPsi}.

\begin{proposition}\label{RescaledNormalOp} $ $
{\bf a)} For a vector field $U\in \FC TM/B$,
\begin{multline*}
	N^G_{\phi\phi}\lrpar{\nabla^{\bbE,0}_U} \\
	=\begin{cases}
	 \sigma\lrpar U +  
		\frac14 \df e\lrpar{ R^{D^+/B}\lrpar{\cR,U}} + \frac12\df e\lrpar{Q_{\phi\phi}},
	& \text{ if } U\in\lrspar{\FC N\pa M/B} \\
	\nabla^{\bbE,\pa M/D}_U +\df e\lrpar{\frac{dx}x} \cl{\frac1xU^\sharp}
	& \text{ if } U\in\lrspar{\FC V\pa M/B} 
	 \end{cases}
\end{multline*}
for a form $Q_{\phi\phi}$ defined below \eqref{DefQ112} and
where $\nabla^{\bbE,\pa M/D}$ is given by 
\begin{equation}\label{RescaledVerticalConn}
	\nabla^{\bbE,\pa M/D,\oplus} + m_0^{\wt\phi}\lrpar{\omega_{\wt\phi-hc}} .
\end{equation}

{\bf b)} The rescaled normal operator of the $\fD$ Bismut superconnection $\bbA$ is
\begin{equation}\label{NPhiPhi}
\begin{split}
	N^G_{\phi\phi}\lrpar{x^2\bbA^2_{\fD}} 
	&=\cH_{D^+/B} + \bbB_{\pa M/D,\wt\phi-hc}^2 \\
	&\phantom{xxx}
	+\df e\lrpar{\tfrac{dx}x}
	\lrspar{\eth^V_{\fD} - m_0^{\wt\phi}\lrpar{\cB_{\wt\phi-hc}} 
	- \frac14 m_0^{\wt\phi}\lrpar{\Curv{\wt\phi}} } \\
	&=: \cH_{D^+/B} + \bbB_{\pa M/D,+}^2.
\end{split}
\end{equation}
Here $\bbB_{\pa M/D,\wt\phi-hc}$ is equal to the Bismut superconnection for $\wt\phi$ where $\Sec{\wt\phi}$ has been replaced by $\Sec{\wt\phi-hc}$, $\cH_{D/B}$ is the harmonic oscillator on the fibers of $\FC N\pa M/B$, given by
\begin{equation*}
	-\sum \lrpar{V_i - \frac14 \df e\lrpar{R^{D^+/B}\lrpar{\cR,V_i}
		+2 Q_{\phi\phi}\lrpar{V_i} }}^2
\end{equation*}
with $V_i$ ranging over a orthonormal frame for $TD/B$, and $\cR$ the radial vector field on the fibers.
\end{proposition}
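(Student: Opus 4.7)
The proof proceeds by direct application of the rescaled-action identity \eqref{RescaledAction} together with the curvature asymptotics of Lemma~\ref{ConnectionAsymp}. Since we must compute the action on the Taylor coefficients $[u_j]$ at $\bhs{\phi\phi}$, and since every $U \in \FC TM/B$ satisfies $Ux = \cO(x^2)$ so that $n_U = 0$, the formula \eqref{RescaledAction} reduces to three contributions: the restriction of $\nabla^{\bbE,0}_U$ to the front face, the Clifford-degree-raising term from $K^{\Hom(\bbE)}(\nu, U)|_{\bhs{\phi\phi}}$, and the further degree-raising term from $\nabla^{\Hom(\bbE)}_\nu K^{\Hom(\bbE)}(\nu, U)|_{\bhs{\phi\phi}}$.

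For part (a) with $U \in [\FC N \pa M/B]$, the boundary restriction contributes the suspended vector field $\sigma(U)$ on the fibres of $\FC N \pa M/B \to D$, while the two curvature contributions are controlled by \eqref{Prop1.14a} and \eqref{Prop1.14b} applied to the $\eps$-family of metrics $g_M(\eps)$. Combining the decomposition \eqref{KCl} with \eqref{Prop1.14b}, and identifying Clifford symbols with exterior forms at leading Getzler order, produces $\tfrac14 \df e(R^{D^+/B}(\cR, U))$, where $\cR$ is the radial vector field on the fibres; the remaining lower-order twisting and $\cB_{\wt\phi-hc}$ contributions are packaged into $\df e(Q_{\phi\phi})$. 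When $U \in [\FC V \pa M/B]$ the analogous curvature terms vanish to the relevant order by Lemma~\ref{ConnectionAsymp}, so only the restriction of $\nabla^{\bbE,0}_U$ survives; substituting the boundary identity $\omega_\psi|_{\pa M} = \omega_{\wt\phi-hc}$ established in \S\ref{sec:Bismut} into \eqref{NablaE0}, and using that the degenerate Clifford action $m_0$ couples the factor $\frac{dx}{x}$ with the vertical Clifford multiplication, yields the stated cross term $\df e(\frac{dx}{x})\cl{\frac{1}{x}U^\sharp}$ alongside the reduced vertical connection \eqref{RescaledVerticalConn}.

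For part (b) I would multiply the Lichnerowicz formula \eqref{Lich} by $x^2$ and expand the Bochner Laplacian in a $\fD$-orthonormal frame split into asymptotically normal vectors in $[\FD N \pa M/B]$ and asymptotically vertical vectors in $[\FD V \pa M/B]$. Applying part (a) termwise, the squares over the normal directions produce the harmonic oscillator $\cH_{D^+/B}$ and the squares over the vertical ones produce the square of the reduced superconnection $\bbB_{\pa M/D, \wt\phi-hc}$. The scalar curvature and twisting curvature summands in \eqref{Lich}, after multiplication by $x^2$, remain bounded at the front face and are consumed into $\bbB^2_{\pa M/D, \wt\phi-hc}$ by the Lichnerowicz formula for that superconnection. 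The remaining contributions, arising from the anticommutator between $m_0(\tfrac{dx}{x})$ and the vertical Dirac factor, account for the odd part with coefficient $\df e(\tfrac{dx}{x})$: the term $\eth^V_{\fD}$ from the naive cross term, $-m_0^{\wt\phi}(\cB_{\wt\phi-hc})$ from the non-product correction in \eqref{TwoSecs'}, and $-\tfrac14 m_0^{\wt\phi}(\Curv{\wt\phi})$ from the curvature-of-fibration component of $\omega_{\wt\phi-hc}$.

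The main obstacle is combinatorial bookkeeping: one has to handle simultaneously the $\fD$-to-$\fC$ rescaling by powers of $x$ and the Getzler filtration on $\Hom(\bbE)$, while converting the pointwise asymptotics of Lemma~\ref{ConnectionAsymp} into identifications at $\bhs{\phi\phi}$. In particular, verifying that $\Sec{\wt\phi}$ is consistently replaced by $\Sec{\wt\phi-hc}$ in $\bbB_{\pa M/D,\wt\phi-hc}$ requires careful use of the tensor $\cB_{\wt\phi-hc}$ from \eqref{DefFDB}, and the appearance of $\Curv{\wt\phi}$ rather than $\Curv{\fD}$ in the cross term uses the cancellation of the extra power of $x$ from \eqref{CurvfD=x} against the $\tfrac{1}{x}$ in $m_0(\tfrac{dx}{x})$. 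The cleanest organization identifies $Q_{\phi\phi}$ so that $\cH_{D^+/B}$ takes exactly the harmonic-oscillator form for which Mehler's formula is applicable, which is precisely what is needed for the subsequent heat-kernel construction.
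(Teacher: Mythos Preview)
Your overall architecture (apply \eqref{RescaledAction} with $n_U=0$, feed in Lemma~\ref{ConnectionAsymp}, then use Lichnerowicz for part (b)) matches the paper, but the attribution of terms in part (a) is inverted in a way that breaks the argument.

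For $U\in[\FC V\pa M/B]$ you assert that ``the analogous curvature terms vanish to the relevant order by Lemma~\ref{ConnectionAsymp}, so only the restriction of $\nabla^{\bbE,0}_U$ survives,'' and then recover the $\wt\phi$-geometry from an alleged identity $\omega_\psi|_{\pa M}=\omega_{\wt\phi-hc}$. Both claims are false. The paper shows at the end of \S\ref{sec:Bismut} that $\Sec\psi|_{\pa M}=\Sec{\psi_D}$ and $\Curv\psi|_{\pa M}=\Curv{\psi_D}$, hence $\omega_\psi|_{\pa M}=\omega_{\psi_D}$: the bare restriction of $\nabla^{\bbE,0}$ carries \emph{no} information about $\wt\phi$ at all. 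That information is recovered precisely by the curvature terms in \eqref{RescaledAction}. For vertical $U$ the second term $K^{\Hom(\bbE)}(\nu,U)|_{\bhs{\phi\phi}}$ does \emph{not} vanish; via \eqref{Prop1.14a} it produces both the $\Sec{\wt\phi-hc}$ contribution and the cross term $\tfrac12\df e(\tfrac{dx}{x})\cl{\tfrac1x U^\sharp}$, while the third term produces $\Curv{\wt\phi}$ via \eqref{Prop1.14b}. These three pieces together assemble into $m_0^{\wt\phi}(\omega_{\wt\phi-hc})$ and the displayed cross term.

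Conversely, for $U\in[\FC N\pa M/B]$ you invoke both \eqref{Prop1.14a} and \eqref{Prop1.14b}, but the second term in \eqref{RescaledAction} actually vanishes outright because such $U$ vanish at the boundary as sections of $TM$; only the third (second-normal-derivative) term survives, and its evaluation requires comparing values along the fibre of $\bhs{\phi\phi}$ with the zero section (as in \cite[Lemma 5.15]{Vaillant}), which is where $R^{D^+/B}(\cR,U)$ and the genuine definition of $Q_{\phi\phi}$ in \eqref{DefQ112} come from. Your description of $Q_{\phi\phi}$ as absorbing ``lower-order twisting and $\cB_{\wt\phi-hc}$ contributions'' is not what $Q_{\phi\phi}$ is. Once part (a) is corrected, your outline for part (b) is essentially the paper's.
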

 
{\bf Remark.} We point out that the first two terms in \eqref{NPhiPhi} commute with $\df e\lrpar{\frac{dx}x}$. This will be used in the computation of the small-time asymptotics of the heat kernel (Lemma \ref{lem:Smallt}).
 
\begin{proof}

{\bf (a)}
All that is left to do is compute \eqref{RescaledAction} mostly using \eqref{Prop1.14a} and \eqref{Prop1.14b} which, as pointed out in \S\ref{sec:Bismut}, hold for $R^{M,0}(\nu, U)$ since $\nu$ and $U$ are $\psi$-vertical.

The first term in \eqref{RescaledAction} is just the usual normal operator of the connection which can be easily computed (e.g., by oscillatory testing) and seen to be the vertical connection for vertical vector fields, while for a horizontal vector field, $U$, it is equal to its principal symbol,
$\sigma\lrpar U$, thought of as a constant coefficient vector field on the fibers of $\FC N\pa M$.

The second term, vanishes for $U\in\FC N\pa M/B$ since $U$ vanishes at the boundary, while if $U \in T\pa M/D$, then by \eqref{Prop1.14a} equals (recall that $\nu x=1$)
\begin{equation*}
	\frac14 m_0^{\wt\phi}\lrpar{R^{M,0}\lrpar{\nu,U}} 
	= \frac12 \sum_{i,\alpha} S^+_{\wt\phi}\lrpar{U,e_i}\lrpar{e_\alpha} 
		\cl{e^i} \df e\lrpar{e^\alpha}
	+ \frac12 \textstyle \df e\lrpar{\frac{dx}x} \cl{\frac1x U^{\sharp}} 
\end{equation*}
where $e_i$ ranges over a basis for $T\pa M/D$, $e_\alpha$ over a basis for $TD$, and we raise indices to indicate the dual one-forms.

Computing the last term in \eqref{RescaledAction} for $U \in \FC N\pa M/B$ can be done recalling that $\bhs{\phi\phi}$ is a bundle over $\pa M \btimes_D \pa M$, and comparing the value at any point with the value at the zero section. This can be computed as in \cite[Lemma 5.15]{Vaillant} and equals
\begin{gather} \label{DefQ112}
	\sum_{i<j} 
	{\bf R}_{\psi_D}\lrpar{\cR+\nu\rest 0,U}\lrpar{Y_1,Y_2} 
		\df e\lrpar{Y_1^{\sharp}\wedge Y_2^{\sharp}} \\
	=: \df e\lrpar{ {\bf R}_{\psi_D}\lrpar{\cR,U} } + \df e\lrpar{Q_{\phi\phi}\lrpar U}, \notag
\end{gather}
with $Y_i$ a frame for $TD$; just as in \cite{BGV} we can replace ${\bf R}_{\psi_D}$ in the last expression with $R^{D^+/B}$.
Finally, this same term for $U \in T\pa M/D$ is computed in \eqref{Prop1.14b} to be
\begin{equation*}
	\sum_{i<j} 
	\Curv{\wt\phi}\lrpar{Y_1,Y_2}\lrpar U \df e\lrpar{Y_1^{\sharp}\wedge Y_2^{\sharp}}
	= \df e\lrpar{\Curv{\wt\phi}\lrpar{\cdot,\cdot}\lrpar U} .
\end{equation*}

{\bf (b)}
We start with the Lichnerowicz formula \eqref{Lich},
\begin{equation*}
\bbA^2_{\fD} = \Delta_{\mathrm{Bochner}}^{M/B}+\frac14\mathrm{scal}_{M/B} -\frac12K_E'\lrpar{e_a,e_b}m_0^am_0^b.
\end{equation*}
Notice that the last two terms will contribute $\scal_{\pa M/D}$ and the twisting curvature of the fibration $\wt\phi$. The first term will split into the harmonic oscillator on the fibers of $\FC N\pa M/B$ and the Bochner Laplacian of $\nabla^{\bbE,\pa M/D}_\cdot +\df e\lrpar{\frac{dx}x} \cl{\frac1x\cdot^{\#}}$ along $\pa M/D$. Using the Lichnerowicz formula again, we can reassemble the square of the superconnection along $\pa M/D$, then finally note that 
\begin{multline*}
	-\sum_{e_i}
	\lrpar{ \nabla^{\bbE,\pa M/D}_{e_i} +\frac12 \df e\lrpar{\tfrac{dx}x} \cl{\tfrac1xe^i} }^2\\
	=\lrpar{\nabla^{\bbE,\pa M/D}_{e_i}}^2
	+\df e\lrpar{\tfrac{dx}x}
	\lrspar{\eth^V_{\fD} - m_0^{\wt\phi}\lrpar{\cB_{\wt\phi-hc}} 
	- \frac14 m_0^{\wt\phi}\lrpar{\Curv{\wt\phi}} },
\end{multline*}
in the last term the contribution from  $\Sec{\wt\phi}$ cancels out \cite[Prop. 2.10]{BGS}.
\end{proof}

The superconnection $\bbB_{\pa M/D,\fD}$ is a $\Cl(1)$ superconnection with respect to Clifford multiplication by $\cl{\frac{dx}x}$ in the sense of \cite[\S 10]{MelrosePiazza}. It can even be thought of as a $\Cl(h+1)$ connection with respect to $\Cl(T^*D^+/B)$.

{\bf Remark.}
Although $\nabla^{\bbE,0}$ and $\bbA$ are both in $\Diff^1_{\fD}$, so that $x\nabla^{\bbE,0}$ and $x\bbA_{\fD}$ are $\fC$ differential operators, the former is  also an element of the rescaled calculus, while the latter is not. It is true that $x^2\bbA_{\fD}$ is an element of the rescaled calculus, and so one might expect to have to multiply $\bbA^2_{\fD}$ by $x^4$ to get an element of the rescaled calculus. One advantage of using the Lichnerowicz formula is that we see that in fact $x^2\bbA^2$ rescales. 

Finally, we comment on the normal operators for perturbed $\fD$ Dirac-type operators.
A small perturbation of $\eth_{\fD}$ does not affect the rescaled normal operator.
However, for a large perturbation of $\eth_{\fD}$ associated to a spectral section $P$, we replace every occurrence of $\eth^V_{\fD}$ by $\eth^V_{\fD}+A_P^{\eps}$. Significantly, the resulting Bismut superconnection is still an element of the rescaled calculus and the rescaled normal operator is only changed by this replacement. The reason is that $A_P^{\eps}$ anti-commutes with $\Cl(T^*D^+/B)$ and so does not interfere with the rescaling.

\section{The $\fD$ families index theorem}

The heat kernel of the Bismut superconnection will be described as a function on a fibrewise heat space. The rescaling argument in $\S$\ref{sec:ResDouble} gives a rescaling at one boundary face in the heat space; there is another rescaling along the diagonal in the interior as time goes to zero.
We describe the heat kernel of both perturbed and unperturbed Bismut superconnections.

The interior rescaling is carried out in \cite[Chapter 8]{APS Book}. It is a geometric extension of the usual Getzler rescaling, in this case we mean e.g., Theorem 10.21 from \cite{BGV} 
which says that for a fibration of closed manifolds the heat kernel has an expansion along the diagonal as $t\to0$
\begin{equation}\label{BGVObjective}
	k_t\lrpar{\zeta,\zeta'} \sim t^{-n/2}\sum t^i k_i
\end{equation}
in which each $k_i$ has Clifford degree in $\Cl_0$ at most $2i$ and furthermore includes an expression for its `full symbol'.

\subsection{The rescaled heat calculus} $ $ \newline
Just as in \$\ref{sec:HeatKerOne} and \S\ref{sec:ResDouble}, the analysis of the heat kernel in \cite{DaiMelrose} and \cite{APS Book} proceeds by blowing-up submanifolds to resolve analytic difficulties.
The heat kernel of the Euclidean Laplacian is given, for $t>0$, as a function on 
$\bbR^n_\zeta \times \bbR^n_{\zeta'} \times \bbR^+_t$ by
\begin{equation*}
	\cK_t\lrpar{\zeta,\zeta'} = \frac1{\lrpar{4\pi t}^{n/2}} \exp\lrpar{-\frac{\abs{\zeta-\zeta'}^2}{4t}}.
\end{equation*}
So the blow-up proceeds by introducing `polar coordinates' around the submanifold $\{\zeta=\zeta', t=0\}$ that are homogeneous with respect to the scaling $\lrpar{\zeta,\zeta',t}\mapsto\lrpar{\lambda\zeta, \lambda\zeta', \lambda^2t}$, such as
\begin{equation}\label{Parabolic}
	\rho = \lrpar{\abs{\zeta-\zeta'}^4+t^2}^{1/4},
	\quad
	\omega = \lrpar{\frac{\zeta-\zeta'}\rho, \frac t{\rho^2}}, 
	\quad \zeta'.
\end{equation}
We refer to this process as {\bf parabolic blow-up} and refer the reader to \cite{APS Book}, \cite{DaiMelrose} for the details of this construction. The notation used for the resulting manifold is
\begin{equation*}
	\lrspar{ (\bbR^n)^2 \times\bbR^+_t; \{\zeta=\zeta', t=0\}, \ang{dt}}
\end{equation*}
where we indicate the original manifold, the submanifold blown-up and the parabolic directions used in the blow-up. A parabolic blow-up without any parabolic directions is the blow-up used in $\S$\ref{sec:ResDouble}.

For the fibration in \eqref{fullfib}, the appropriate heat space is obtained from 
$M \btimes_\psi M \times\bbR^+ \to B$ by blowing-up, on each fiber, the 
`spatial corner' \newline 
$\pa M \btimes_\psi \pa M \times \bbR^+$, the fiber diagonal at the boundary at time zero
\newline $\pa M \btimes_{\wt \phi} \pa M \times \{0\}$ (at time zero because here `the heat arrives in finite time'), and the spatial diagonal at time zero, $\diag_M \times \{0\}$. Thus the heat space, $HM_{\fD,\psi}$ is given by the iterated parabolic blow-up
\begin{multline}\label{HeatSpace}
	HM_{\fD,\psi} :=
	\left[ M \btimes_B M \times\bbR^+ ; \pa M \btimes_B \pa M \times \bbR^+; 
	\right. \\ \left.
	\pa M \btimes_D \pa M \times \{0\}, \ang{dt} 
	; \diag_M \times \{0\}, \ang{dt} \right] .
\end{multline}
As all of the operations are done fiber-by-fiber, we end up with a fibration
\begin{equation*}
	\xymatrix{HX_{\fD} \ar@{-}[r] & HM_{\fD,\psi} \ar[r]^(.6){\psi_D} & B }
\end{equation*}
where the fiber is the heat space from \S\ref{sec:HeatKerOne} used by Vaillant.

We denote the three new boundary hypersurfaces introduced by the blow-ups respectively by $\bhs{11,0}$, $\bhs{\phi\phi,2}$, and $\bhs{00,2}$. The notation indicates which variables vanish to define each face and to what order - thus $11,0$ indicates that we are at the boundary in both the left and right factors but not in the temporal factor, while $00,2$ indicates that we are at the temporal boundary but at neither of the spatial boundaries, and $\phi\phi,2$ indicates the fiber diagonal at the temporal boundary.

The interior of the boundary face $\bhs{\phi\phi,2}$ can be identified with 
\begin{equation*}
	\bhs{\phi\phi,2}^{\circ} \cong \FC N\pa M/B \btimes_D \pa M \times\bbR^+.
\end{equation*}
We will use as a time variable in the last factor the projective coordinate
\begin{equation}\label{Deftau}
	\tau := \frac t{x^2}.
\end{equation}

The heat space comes with a blow-down map
\begin{equation*}
	HM^2_{\fD,\psi} \xrightarrow{\beta} M \btimes_B M \times \bbR^+,
\end{equation*}
and we will denote by $\beta_L$ and $\beta_R$ the composition of $\beta$ with the projection onto the left or right factor of $M$, respectively.
Elements of the fibrewise heat calculus will be sections of 
\begin{equation*}
	\cH := \beta^*\Hom\lrpar{\bbE} : = \psi^*\Lambda^*B \otimes \beta^*\Hom\lrpar E.
\end{equation*}

Proceeding as in $\S$\ref{sec:ResDouble}, we note that
$\beta^*\Hom\lrpar{\bbE}$ admits filtrations at both $\bhs{\phi\phi,2}$ and $\bhs{00,2}$, since 
\begin{equation*}
	\beta^*\Hom\lrpar E\rest{\phi\phi,2}
	\cong \Cl\lrpar{T^*D^+/B} \otimes \Hom'_{\phi\phi,2}\lrpar E
\end{equation*}
and similarly
\begin{equation*}
	\beta^*\Hom\lrpar E\rest{00,2}
	\cong \Cl\lrpar{T^*M/B} \otimes \Hom'_{00,2}\lrpar E.
\end{equation*}
Just as in \eqref{FiltPhi}, we denote by $\Cl_0^k(D^+)$ the elements of $\Lambda^*B \otimes \Cl(T^*D^+/B)$ of total degree at most $k$ and similarly $\Cl_0^k(M)$.
We define
\begin{gather*}
	\Cl^k_{\phi\phi,2} := \Cl_0^k\lrpar{D^+} \otimes \Hom'_{\phi\phi,2}\lrpar E,
		\quad k\in\lrbrac{1,\ldots, b+h+1} \\
	\Cl^\ell_{00,2} := \Cl_0^\ell\lrpar M \otimes \Hom'_{00,2}\lrpar E,
		\quad\ell\in\lrbrac{1,\ldots, b+n} 
\end{gather*}
where $b=\dim B$, $n=\dim X$, and $h=\dim Y$, then we use the connection 
\begin{equation*}
	\nabla^\cH := \beta^*\nabla^{\Hom\lrpar{\bbE}} \otimes dt \frac{\pa}{\pa t},
\end{equation*}
and two vector fields, $\nu_{\phi\phi,2}$ and $\nu_{00,2}$, normal to $\bhs{\phi\phi,2}$ and $\bhs{00,2}$ respectively but tangent to all other boundary faces, to define the space of {\bf rescaled sections of $\cH$} by
\begin{multline*}
	\cD_H = \left\{ u \in \CI\lrpar{HM_{\fD,\psi},\cH} : \right. \\ \left.
	\lrpar{\nabla^{\cH}_{\nu_{00,2}}}^\ell u \rest{00,2} \in \Cl_{00,2}^\ell, \quad 
	\lrpar{\nabla^{\cH}_{\nu_{\phi\phi,2}}}^k u \rest{\phi\phi,2} \in \Cl_{\phi\phi,2}^k \right\}. 
\end{multline*}
The boundary hypersurface $\bhs{00,2}$ fibers over $\diag_M$, and we will require that $\nu_{\phi\phi,2}$ be tangent to the fibers of $\bhs{00,2}$; similarly, $\nu_{00,2}$ should be tangent to the fibers of $\bhs{\phi\phi,2}$ over $\pa M \btimes_D \pa M$.
As explained in \cite[Chapter 8]{APS Book}, there is a bundle $\GrP$ whose space of sections is $\cD_H$.

We denote by $\rho_{00,2}$ a boundary defining function for $\bhs{00,2}$ and use similar notations $\rho_{\phi\phi,2}$ and $\rho_{11,0}$ for the other
boundary faces.
An element of the heat calculus will be a smooth function on $HM_{\fD,\psi}$ valued in $\GrP$ (times a density bundle) with prescribed degree of vanishing at $\bhs{00,2}$, $\bhs{\phi\phi,2}$, and $\bhs{11,0}$ and vanishing to infinite order at all other boundary faces.
Thus for three integers $\ell>0$, $m\geq 0$, and $p\geq 0$ we define the {\bf fibrewise rescaled heat calculus} to be
\begin{equation}\label{RescaledHeat}
	\Psi^{\ell,m,p}_{RHeat}\lrpar{M/B,E}
	:= \rho_{00,2}^\ell \rho_{\phi\phi,2}^m \rho_{11,0}^{p-v}
	\dot{\mathcal{C}}^{\infty}\lrpar{HM_{\fD,\psi}, \GrP\otimes KD_H}.
\end{equation}
with $v=\dim \pa M/D$, $\dot{\mathcal{C}}^{\infty}$ denoting vanishing to infinite order at the other faces, and with the density bundle
\begin{equation}\label{KDH}
	KD_H = \frac{\lrpar{x'}^n}{t^{n/2}} \frac{dt}t \beta_{H,R}^*\FC\Omega(M/B),
\end{equation}
where $\FC\Omega(M/B)$ refers to the $\fC$ density bundle on the fibers of $\psi$.

{\bf Remark.}
An important aspect of this definition is that, for any fixed $t>0$, the integral kernels are $b$-densities, as pointed out in \cite[$\S$4.3, Lemma 5.26(b)]{Vaillant}.

\subsection{The heat kernel} $ $ \newline
We will construct the heat kernel for Bismut superconnections associated to Fredholm Dirac-type operators possibly with perturbations.
To avoid any problems in applying the local index theorem, and also avoid having to pass to an `extended' fibrewise heat calculus, we will multiply any perturbations by an appropriate cut-off function.
Let $\chi\lrpar t$ be a smooth function identically equal to zero near zero and equal to one for all $t\geq1$.
If $A_P^\eps$ is a large perturbation associated to a spectral section $P$ then we are interested in
\begin{equation}\label{CutOffA}
	\bbA_{\fD} + \chi\lrpar{\tfrac t{x^2}} \tfrac1x A_P^\eps,
\end{equation}
while if $Q_R^\eps$ is a small perturbation with associated spectral section $R$, we are interested in
\begin{equation}\label{CutOffB}
	\bbA_{\fD} + \chi(t) Q_R^\eps \cP_\cK.
\end{equation}
{\em We will be purposely vague and write $\wt \bbA_{\fD}$ to mean any of $\bbA_{\fD}$, \eqref{CutOffA}, or \eqref{CutOffB} and only specify when necessary.}
We now show that we can solve the heat equation
\begin{equation*}
	\lrpar{ t\pa_t + t\wt\bbA_{\fD}^2 } H = 0, \quad \lim_{t\to0} H = \Id
\end{equation*}
in the fibrewise rescaled heat calculus, by computing its rescaled normal operators.

It is easy to see that the rescaling at $\bhs{\phi\phi,2}$ is compatible with the rescaling at $\bhs{\phi\phi}$ in section $\S$\ref{sec:ResDouble} in that, for instance,
for a $\fD$ differential operator $B$ we have
\begin{equation*}
	N^G_{\phi\phi,2}\lrpar{t^{1/2}BA} 
	= \tau^{1/2}N^G_{\phi\phi}\lrpar{xB}N^G_{\phi\phi,2}\lrpar A,
\end{equation*}
thus the rescaled normal operator of $\nabla^{\bbE,0}$ at $\bhs{\phi\phi,2}$ can be read off from Proposition \ref{RescaledNormalOp}, and also that of $t \wt \bbA_{\fD}^2$.
Indeed, we get
\begin{equation*}
	N^G_{\phi\phi,2}( t \bbA_{\fD}^2 ) = \tau N_{\phi\phi,2}^G(x^2\bbA_{\fD}^2)
	=\tau \cH_{D^+/B} + \tau \bbB_{\pa M/D,+}^2
\end{equation*}s
while for a large perturbation
\begin{equation*}
\begin{split}
	N^G_{\phi\phi,2}\lrpar{t \wt \bbA_{\fD}^2}
	&= \tau \cH_{D^+/B} + \tau \bbB_{\pa M/D,\fD}^2(\eps,\tau) \\
	&
	+\tau\df e\lrpar{\frac{dx}x}
	\lrspar{\eth^V_{\fD} + \chi(\tau) A_P^\eps - m_0^{\wt\phi}\lrpar{\cB_{\wt\phi-hc}} 
	- \frac14 m_0^{\wt\phi}\lrpar{\Curv{\wt\phi}} } \\
	&=: \tau \cH_{D^+/B} + \tau \wt \bbB_{\pa M/D,+}^2(\eps,\tau),
\end{split}
\end{equation*}
where $\bbB_{\pa M/D, \fD}(\eps,\tau)$ is as in Proposition \ref{RescaledNormalOp} with $\eth^V_{\fD}$ replaced by $\eth^V_{\fD} + \chi(\tau) A_P^\eps$.
Notice that a small pertubation does not affect $N^G_{\phi\phi,2}(t\wt\bbA_{\fD}^2)$.

As pointed out in \cite{MelrosePiazza} the rescaling at $\bhs{00,2}$ behaves just like Getzler rescaling on a family of closed manifolds. Indeed, the discussion in \cite[Chapter 8]{APS Book} generalizes to families just as in $\S$\ref{sec:ResDouble} for horizontal vector fields. 
Thus, if $U \in \FD TM/B$, then
\begin{equation}\label{Act002}
	N^G_{00,2}\lrpar{t^{1/2}\nabla^{\cH}_{\beta_L^*U}} 
	= \sigma\lrpar U - \frac12 \df e \lrspar{R^{M/B}\lrpar{\nu_{00,2}\rest 0 
		+ \frac12\cR_{00,2},U}}
\end{equation}
where $\sigma\lrpar U$ is the symbol of $U$ interpreted as a constant coefficient vector field on the fibres of $\FD TM/B$, $\nu_{00,2}\rest 0$ is the restriction of $\nu_{00,2}$ to the zero section of the fibration $\bhs{00,2}\to \diag_M\times\{0\}$ and will be denoted by $Q_{00,2}$, 
$\cR_{00,2}$ is the radial vector field of this fibration, and the curvature acts as exterior product by a two-form. It follows that 
\begin{equation*}
	N^G_{00,2}\lrpar{t\wt\bbA_{\fD}^2} = \cH_{M/B} + \df e\lrpar{K_E'},
\end{equation*}
with the harmonic oscillator on the fibers of $\FC TM/B \to M$ defined as in Proposition \ref{RescaledNormalOp} with $R^{M/B}$ instead of $R^{D/B}$ and $K^{E'}$ the twisting curvature of $E$ (cf. \cite[Prop. 10.28]{BGV}).
Notice that neither a small nor a large perturbation affects the normal operator at $\bhs{00,2}$.

The normal operator at $\bhs{11,0}$ is potentially more complicated. However, as we mentioned in \S\ref{sec:HeatKerOne}, for the construction of the heat kernel we are only interested in the operators on this face acting on the bundle $\cK = \ker \eth^V_{\fD}$. In particular, if $\eth^V_{\fD}$ has no kernel (or has been perturbed to make it invertible), then we do not have a problem to solve at this face.
On the other hand, if $\dim \cK>0$ then it follows from the discussion around \eqref{HorizontalSupCon} that the normal operator at this face is
\begin{equation*}
	N_{11,0}(\bbA_{\fD})
	= \bbB_{\psi_D^+} + \cJ_{\wt\phi}
\end{equation*}
acting on $\Lambda^*B \otimes \cK$.
Finally, if $\wt\bbA_{\fD}$ is given by \eqref{CutOffB} then its restriction to $\cK$ at the boundary involves $\wt \bbB_{\psi_D^+}(t,\eps)$ obtained from $\bbB_{\psi_D^+}$ by replacing every instance of $\eth^{\cK}$ by $\eth^{\cK} + \chi(t) Q_R^\eps$.

Having described the normal operators at each of the relevant boundary faces, we next follow Vaillant and construct the heat kernel in the fibrewise rescaled heat calculus.
The heat kernels of the harmonic oscillators are given by Mehler's formula (as extended by Getzler), see \cite[Appendix 4]{Vaillant}. 

\begin{proposition}\label{prop:HeatKernels}
The heat kernel of $\wt\bbA_{\fD}^2$ is an element of the rescaled heat calculus
\begin{equation*}
	\exp (-t\wt\bbA_{\fD}^2) \in \Psi^{2,2,0}_{RHeat,\fD}\lrpar{M/B;E}.
\end{equation*}
The normal operators depend on the presence of a perturbation:
\begin{itemize}
\item [i)] 
In the invertible case, when $\cK$ forms a bundle and $\eth^{H,\cK}$ is invertible,
\begin{align*}
	&N^G_{00,2}\lrpar{e^{-t\bbA_{\fD}^2}}\lrpar \zeta 
	  = e^{-\cH_{M/B}\lrpar \zeta}e^{-\frac12 Q_{00,2}\lrpar \zeta}e^{-x^2K^{E'}}\\
	&N^G_{\phi\phi,2}\lrpar{e^{-t\bbA_{\fD}^2}}\lrpar{\hat \zeta, \tau}
	  = e^{-\tau\cH_{D^+/B}\lrpar{\hat\zeta}}
	  e^{-\frac12 Q_{\phi\phi,2}\lrpar{\hat\zeta}}
	  e^{-\tau \bbB_{\pa M/D,+}^2} \\
	&N^G_{11,0}\lrpar{e^{-t\bbA_{\fD}^2}}\lrpar t
	  =\cP_\cK e^{-t\lrpar{\bbB_{\psi_D}+\cJ_{\wt\phi}}^2}
	  \lrspar{ \frac1{\sqrt{4\pi t}} e^{-\frac{|\log s|^2}{4t} } }
\end{align*}
with $\zeta$ a linear variable in the fibres of $\FC TM/B$ and $\hat\zeta$ a linear variable in the fibers of $\FC N\pa M/B$.

\item [ii)]
If $P$ is a spectral section for $\eth^V_{\fD}$ and $\wt \bbA_{\fD}$ is given by \eqref{CutOffA}, 
\begin{align}
	&N^G_{00,2}\lrpar{e^{-t\wt \bbA_{\fD}^2}}\lrpar \zeta 
	= e^{-\cH_{M/B}\lrpar \zeta}e^{-\frac12 Q_{00,2}\lrpar \zeta}e^{-x^2K^{E'}}
	\notag \\
	&N^G_{\phi\phi,2}\lrpar{e^{-t\wt \bbA_{\fD}^2}}\lrpar{\hat \zeta, \tau}
	= e^{-\tau\cH_{D^+/B}\lrpar{\hat\zeta}}
	e^{-\frac12 Q_{\phi\phi,2}\lrpar{\hat\zeta}} 
	e^{-\tau \wt \bbB_{\pa M/D,+}^2(\tau,\eps)}
	\notag \\
	&N^G_{11,0}\lrpar{e^{-t\wt \bbA_{\fD}^2}}\lrpar t = 0. \label{N110}
\end{align}

\item [iii)]
If $R$ is a spectral section for $\eth^{H,\cK}_{\fD}$ and $\wt \bbA_{\fD}$ is given by \eqref{CutOffB}, 
\begin{align*}
	&N^G_{00,2}\lrpar{e^{-t \wt \bbA_{\fD}^2}}\lrpar \zeta 
	  = e^{-\cH_{M/B}\lrpar \zeta}e^{-\frac12 Q_{00,2}\lrpar \zeta}e^{-x^2K^{E'}}\\
	&N^G_{\phi\phi,2}\lrpar{e^{-t \wt \bbA_{\fD}^2}}\lrpar{\hat \zeta, \tau}
	  = e^{-\tau\cH_{D^+/B}\lrpar{\hat\zeta}}
	  e^{-\frac12 Q_{\phi\phi,2}\lrpar{\hat\zeta}}
	  e^{-\tau \bbB_{\pa M/D,+}^2} \\
	&N^G_{11,0}\lrpar{e^{-t \wt \bbA_{\fD}^2}}\lrpar t
	  =\cP_\cK e^{-t\lrpar{\cl{\tfrac{dx}x}x\pa_x + \wt\bbB_{\psi_D}(t,\eps)+\cJ_{\wt\phi}}^2}
\end{align*}

\end{itemize}
\end{proposition}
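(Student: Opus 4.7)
The plan is to follow Vaillant's construction of the heat kernel in the rescaled heat calculus (reviewed in \S\ref{sec:HeatKerOne}), adapted to the families setting and allowing for the perturbations \eqref{CutOffA} and \eqref{CutOffB}. We construct the heat kernel as a parametrix by prescribing its restriction to each of the three blown-up boundary faces $\bhs{00,2}$, $\bhs{\phi\phi,2}$, $\bhs{11,0}$ via the model problems, patch these into a global section of $\GrP$, and then remove the residual error by Neumann iteration inside $\Psi^{2,2,0}_{RHeat,\fD}(M/B;E)$.

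First I verify that $t\pa_t + t\wt\bbA_{\fD}^2$ acts on the rescaled heat calculus. The computations just preceding the statement show that the rescaling at $\bhs{\phi\phi,2}$ is compatible with the rescaling of $\S\ref{sec:ResDouble}$ and that $t\wt\bbA_{\fD}^2$ has the claimed rescaled normal operators; the Lichnerowicz formula \eqref{Lich} together with the curvature asymptotics in Lemma \ref{ConnectionAsymp} supply the input needed for Melrose's criterion \cite[Chapter 8]{APS Book}, just as in Vaillant's one-operator construction. The cut-off factors $\chi(t/x^2)$ in \eqref{CutOffA} and $\chi(t)$ in \eqref{CutOffB} vanish identically near $t=0$, so the perturbations contribute nothing at $\bhs{00,2}$ and do not obstruct the rescaling there; by the remark at the end of \S\ref{sec:ResDouble}, the $\Cl(T^*D^+/B)$-odd self-adjoint perturbation $A_P^\eps$ is compatible with the rescaling at $\bhs{\phi\phi,2}$.

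Next I solve the model problems. At $\bhs{00,2}$ the rescaled normal operator equals $\cH_{M/B} + \df e(K_E')$, whose heat kernel is given by Mehler's formula extended by Getzler, as recalled in \cite[Appendix 4]{Vaillant}; this is unaffected by either perturbation. At $\bhs{\phi\phi,2}$ the rescaled normal operator is $\tau\cH_{D^+/B} + \tau\bbB_{\pa M/D,+}^2$ (or its perturbed counterpart $\tau\wt\bbB_{\pa M/D,+}^2(\eps,\tau)$); the remark after Proposition \ref{RescaledNormalOp} shows these two summands commute, so the heat kernel factors as a product, the first factor by Mehler and the second by the standard families heat equation on $\pa M \to D$. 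At $\bhs{11,0}$ in the invertible and small-perturbation cases, the restriction to $\cK$ of $t\wt\bbA_{\fD}^2$ is the square of the superconnection $\bbB_{\psi_D^+} + \cJ_{\wt\phi}$ (resp.\ the same with $\eth^{\cK}$ replaced by $\eth^{\cK} + \chi(t)Q_R^\eps$), and its heat kernel is obtained as a product of the standard heat kernel on the cylinder $\bbR^+$ in $\log s$ with the heat kernel of the boundary superconnection. In the large-perturbation case the family $\eth_{\fD}^V + A_P^\eps$ is invertible, so the $\bhs{\phi\phi,2}$ model decays exponentially as $\tau\to\infty$ and the induced initial condition at $\bhs{11,0}$ is zero, forcing \eqref{N110}.

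The matching at the intersecting boundary faces is immediate: at $\bhs{00,2}\cap\bhs{\phi\phi,2}$ both models reduce to the identity as $\tau\to 0$, while at $\bhs{\phi\phi,2}\cap\bhs{11,0}$ both models agree (via the projection $\cP_\cK$ obtained from the large-$\tau$ limit, using the constant rank assumption) in the unperturbed and small-perturbation cases, and both are zero in the large-perturbation case. Pasting with a cut-off produces a section $H_0$ of $\GrP$ such that $(t\pa_t + t\wt\bbA_{\fD}^2)H_0$ vanishes to positive order at each of the three blown-up faces. The remaining step is to remove this error by Neumann iteration, for which one needs a composition formula in the fibrewise rescaled heat calculus showing that convolution with the error gains vanishing at $\bhs{00,2}$, $\bhs{\phi\phi,2}$, and $\bhs{11,0}$. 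I expect this composition result, adapted from \cite[\S 4.3, \S 5]{Vaillant} to the families setting by the bookkeeping of \cite[\S 9]{MelrosePiazza}, to be the main technical obstacle, especially in the perturbed cases where one must verify that the $\Cl(T^*D^+/B)$-odd self-adjoint correction $A_P^\eps$ and the smoothing correction $Q_R^\eps \cP_\cK$ are preserved by the calculus and do not destroy the rescaling. Once this is in place, the Neumann series converges in $\Psi^{2,2,0}_{RHeat,\fD}(M/B;E)$ and its sum is the desired heat kernel, with restrictions to the three boundary faces equal to the prescribed models.
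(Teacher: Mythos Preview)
Your proposal is correct and follows essentially the same approach as the paper: solve the model problems at the three blown-up faces, check compatibility at the corners, and then correct the resulting parametrix by iteration using a composition formula in the fibrewise rescaled heat calculus. The only notable difference is emphasis: you flag the families composition result as the main technical obstacle to be worked out, whereas the paper dispatches it in two sentences by observing that Vaillant's composition result and triple-space construction can be carried out fibrewise with smooth dependence on $B$ (invoking \cite[\S4]{Vaillant} and the remark in \cite{MelrosePiazza}), and that the cut-off factors $\chi$ prevent any nontrivial asymptotics from appearing away from the three blown-up faces, so no extended heat calculus is needed; the final inversion is phrased as a Volterra-type argument via \cite[Thm.~7.24]{APS Book} rather than as a Neumann series, but this is the same mechanism.
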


{\bf Remark.} The vanishing of \eqref{N110} means that $e^{-t\wt \bbA_{M/B,P}}$ is actually an element of $\Psi^{2,2,1}_{RHeat,\fD,\psi}\lrpar{M;E}$, but we are interested in its normal operators as an element of $\Psi^{2,2,0}_{RHeat,\fD,\psi}\lrpar{M;E}$.

\begin{proof}
Notice that the normal operators in every case match at the corner $\bhs{00,2}\cap \bhs{\phi\phi,2}$ by compatibility of the rescalings and solve the heat equation at these faces.
Demanding that compatibility at the corner $\bhs{\phi\phi,2}\cap \bhs{11,0}$ yields a boundary value for the heat equation for the face $\bhs{11,0}$. If $\eth^V_{\fD}$ is invertible (or has been perturbed to be invertible), then the boundary value is zero, and the unique solution to the heat equation vanishing at $t=0$ is the zero kernel.
If, on the other hand, $\eth^V_{\fD}$ is not invertible then the compatibility condition is that the kernel at time $t=0$ should be the projection onto the kernel of $\eth^V_{\fD}$.
Thus the relevant problem at $\bhs{11,0}$ is the heat equation for the indicial operator $I_b(\wt\bbA_{\fD}) = \cl{\frac{dx}x}x\pa_x + \wt\bbB_{\psi_D}+\cJ_{\wt\phi}$ acting on the bundle $\cK$ and this is solved by the normal operator prescribed above.
Thus, in every cases, the alleged normal operators are compatible and so there is an element of the rescaled heat calculus with these normal operators. Any such element is a parametrix which solves the heat equation to first order in the rescaled heat calculus (which is all we need for the index formulas below).

The composition result in \cite[Chapter 4]{Vaillant} can be applied fibrewise to get a composition result for the fibrewise heat calculus. In fact, as pointed out in \cite{MelrosePiazza}, the construction of the triple space used to prove the composition result can itself be carried out fibrewise and show that not only does the same composition result hold, but it has smooth dependence on the base of the fibration.

The composition in the heat calculus can be used, together with the solution to the normal operators, to construct a parametrix that solves the heat equation to infinite order at each boundary face. At that point all that is left is a Volterra-type operator which as shown in \cite[Thm. 7.24]{APS Book} has an inverse of the same type.
\end{proof}

{\bf Remark.} In an early version of \cite{MelrosePiazza} the perturbations were not multiplied by cut-off functions to keep them away from $t=0$ and so an `extended' heat calculus composition result was used and established in an appendix. The reason being that the composition resulted in elements with non-trivial asymptotics as $t\to0$ away from the diagonal, which are not allowed in the `standard' heat calculus.
Later the cut-off was added and, though the appendix remains, the composition result is not needed as the perturbations vanish to infinite order at $t=0$. The same is true in our case, no non-trivial asymptotics arise at any face other than $\bhs{00,2}$, $\bhs{\phi\phi,2}$, and $\bhs{11,0}$ hence we do not need to extend Vaillant's heat calculus.

\subsection{The Chern character} $ $ \newline
The Chern character of a superconnection on a closed manifold is defined to be the supertrace of its heat kernel. 
A key part of extending the index theorem to non-compact manifolds is that the heat kernel is generally not of trace class (cf. \S\ref{sec:VaillantIndex}).
This is true for $\fD$ superconnections and we will have to deal with renormalization. 
Fortunately, the behavior of the heat kernel for positive time is just like that treated in \cite{MelrosePiazza}. What is more, if the heat kernel vanishes at $\bhs{11,0}$ then it actually is of trace class and behaves much like the heat kernel on a closed manifold. 

Recall the definition of the renormalized trace from \eqref{DefRTr}.
As this involves restricting the integral kernel of an operator to the diagonal, it is important to note - from the definition of the density bundles - that restricting an element of the heat calculus to the diagonal and taking the pointwise trace yields a function on
\begin{equation*}
	\diag_H = \lrspar{ M \times \bbR^+_{\sqrt t}; \pa M \times \lrbrac 0}
\end{equation*}
given by \cite[Lemma 5.26]{Vaillant}
\begin{multline*}
	\tr \rest{\diag}: \Psi^{2,2,0}_{RHeat,\fD}\lrpar{M/B,E} \\
	\to \rho_{11,0}^{-1}\rho_{\phi\phi,2}^{-h-2}\rho_{00,2}^{-n}
	\CI\lrpar{\diag_H,\beta_{\diag_H,R}^*\Omega\lrpar X \; dt} 
\end{multline*}
\begin{equation*}
	\str \rest{\diag}: \Psi^{2,2,0}_{RHeat,\fD}\lrpar{M/B,E}
	\to \rho_{11,0}^{-1}
	\CI\lrpar{\diag_H,\beta_{\diag_H,R}^*\Omega\lrpar X \; dt}
\end{equation*}
where $\Omega\lrpar X$ denotes the density bundle on $X$, and the extra cancellation for the supertrace is the result of the Getzler rescalings at $\bhs{00,2}$ and $\bhs{\phi\phi,2}$.

Thus, we see that in the constant rank case, $e^{-t\bbA_{\fD}^2}$ is not of trace class but is a $b$-density for $t>0$. This means that we will be able to make use of the $b$-trace of \cite{APS Book} to define a renormalized Chern character and especially that the results of Melrose and Piazza \cite{MelrosePiazza} describe this renormalized Chern character very explicitly. In the presence of a large perturbation we have an extra degree of vanishing from \eqref{N110}, so $e^{-t\bbA_{\fD,P}^2}$ actually is of trace class for $t>0$. Hence we can use the traditional definition of the Chern character, which is then described very explicitly by Berline, Getzler, and Vergne \cite{BGV}.

As already remarked, the renormalized trace is not an actual trace since it does not vanish on commutators. However, it is always true that the renormalized trace of a commutator localizes to the boundary. For $b$-densities this is particularly elegant as shown by Melrose \cite[Chapter 4]{APS Book} (see also \cite[Proposition 9]{MelrosePiazza} for families) and is given by the trace-defect formula
\begin{equation*}
	\Ren{Tr}{\lrspar{A,B}}
	= \frac i{2\pi}\int_\bbR 
	\Tr\lrpar{ \pa_\lambda I\lrpar{A,\lambda} \cdot I\lrpar{B,\lambda} } \; d\lambda,
\end{equation*}
whenever at least one of $A$ and $B$ is smoothing.
Recall that, given a $b$-operator $A$, the indicial family of $A$ is, for each complex number $\lambda$, an operator on the boundary $\pa X$ defined by
\begin{equation*}
	I\lrpar{A,\lambda}\lrpar{\xi} = \lrpar{x^{-i\lambda}Ax^{i\lambda}\wt\xi}\rest{\pa X}
\end{equation*}
where $\wt\xi$ is any extension of $\xi$ from the boundary (and the result is independent of the extension used).

As the heat kernels of $\fD$ Bismut superconnections in the constant rank case are $b$-densities for $t>0$, so generally not integrable, we define
\begin{equation*}
	\Ren{\Ch}{\bbA_{\fD}} = \Ren{Str}{e^{-\bbA_{\fD}^2}}.
\end{equation*}

The heat equation proof of the index theorem proceeds by comparing the behavior at $t\to \infty$ and $t\to0$ of the rescaled Bismut superconnection, whose definition we now recall.
Given any superconnection $\bbA$, say it acts on $\CI(X;E)$ by 
\begin{equation*}
	\bbA = \bbA_{[0]} + \bbA_{[1]} + \ldots + \bbA_{[k]}
\end{equation*}
where each $\bbA_{[j]}$ maps into $j$-forms, we define the {\bf rescaled superconnection} $\bbA^t$ to be
\begin{equation*}
	\bbA^t := t^{1/2}\lrpar{\bbA_{[0]} + t^{-1/2}\bbA_{[1]} + \ldots + t^{-k/2}\bbA_{[k]} }.
\end{equation*}
This can be expressed in terms of the function $\delta_t^B$ that multiplies forms on $B$ of degree $k$ by $t^{-k/2}$ as
\begin{equation}\label{RescaledA}
	\bbA^t = t^{1/2}\delta_t^B \bbA \lrpar{\delta_t^B}^{-1}.
\end{equation}

In the following lemma we compute the derivative in $t$ of the renormalized Chern character. When $\eth^V_{\fD}$ has constant rank kernel, $\cK$, but is not invertible there is a boundary contribution in the form of a supertrace of an operator on $\cK$. We define the supertrace on $K$ by
\begin{equation*}
	\str_{\cK}(A) = \int_{\pa M/D} \tr_E \lrspar{ Q_{M/B} \cl{\tfrac{dx}x} A }
\end{equation*}
where $Q_{M/B}$ is the grading operator on $\Cl(M/B)$ (=$i^{\lfloor m/2 \rfloor} \cl{\dvol_{M/B}}$).

\begin{lemma}[\cite{BGV}, Theorem 9.17; \cite{MelrosePiazza}, Proposition 11, Corollary 3] \label{lem:dt} $ $\newline
i)
If $\eth^V_{\fD}$ is invertible or is perturbed to be invertible, then the Chern character of the $\fD$ Bismut superconnection $\wt \bbA_{\fD}$ satisfies
\begin{equation*}
\frac{\pa}{\pa t} \Ch\lrpar{ \wt \bbA_{\fD}^t } 
= -d_B\Str\lrpar{ \frac{\pa \wt \bbA_{\fD}^t}{\pa t} e^{-(\wt \bbA_{\fD}^t)^2} }.
\end{equation*}

Otherwise, the renormalized Chern character of $\wt \bbA_{\fD}$ satisfies 
\begin{equation*}
	\frac{\pa}{\pa t} \lrpar{\Ren{Ch}{ \wt \bbA_{\fD}^t } }
	= 
	-d_B\; \Ren{Str}{ \frac{\pa \wt \bbA_{\fD}^t}{\pa t} e^{-(\wt \bbA_{\fD}^t)^2 } }
	+\hat\eta\lrpar t
\end{equation*}
where $\hat\eta\lrpar t \in \Lambda^\ev B$ is defined to be
\begin{equation*}
	\hat\eta\lrpar t 
	= \sqrt t
	\int_{D/B} \str_K\lrpar{ N^G_{11,0}\lrspar{ \frac{\pa \wt \bbA_{\fD}^t}{\pa t} e^{-(\wt \bbA_{\fD}^t)^2 } } }.
\end{equation*} 

ii) If $\eth^{H,\cK}_{\fD}$ is invertible then
\begin{align*}
	-\hat\eta\lrpar{ \eth^{H,\cK}_{\fD} } &:= \int_0^\infty \hat\eta(t) \; dt \\
	&= \frac1{2\sqrt\pi} \int_0^\infty \int_{D/B} 
	\str_{\cK}\lrpar{ \frac{\pa}{\pa t} (\bbB_{\psi_D}^t + \cJ_{\wt\phi}^t)
	e^{-(\bbB_{\psi_D}^t + \cJ_{\wt\phi}^t)^2}  } \;dt,
\end{align*}
and if $\dim D/B$ is even and $\cJ_{\wt\phi}=0$ then $\hat\eta(t)$ vanishes.
If $Q$ is a spectral section for $\eth^{H,\cK}_{\fD}$ then
\begin{equation*}
	-\hat\eta_Q\lrpar{ \eth^{H,\cK}_{\fD} } := \int_0^\infty \hat\eta(t) \;dt
\end{equation*}
is given, up to an exact form, by
\begin{equation*}
	\frac1{2\sqrt\pi} \int_0^\infty \int_{D/B} 
	\str_{\cK}\lrpar{ \frac{\pa}{\pa t} (\hat\bbB_{\psi_D}^t + \cJ_{\wt\phi}^t)
	e^{-(\hat \bbB_{\psi_D}^t + \cJ_{\wt\phi}^t)^2}  } \;dt,
\end{equation*}
where $\hat\bbB_{\psi_D}$ is equal to $\wt\bbB_{\psi_D}$ with $\eps=0$.
\end{lemma}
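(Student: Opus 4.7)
The plan is to adapt the standard Duhamel-plus-trace-defect argument of Melrose and Piazza to the $\fD$ setting, where the boundary contribution is computed via the normal operator at $\bhs{11,0}$ of Proposition \ref{prop:HeatKernels}. Since $\wt\bbA_{\fD}^t$ is odd we have $\pa_t(\wt\bbA_{\fD}^t)^2 = [\wt\bbA_{\fD}^t, \pa_t\wt\bbA_{\fD}^t]_s$ as a graded commutator, and Duhamel's formula together with cyclicity of the supertrace yields
\begin{equation*}
\pa_t \Str(e^{-(\wt\bbA_{\fD}^t)^2}) = -\Str([\wt\bbA_{\fD}^t,\, \pa_t\wt\bbA_{\fD}^t \cdot e^{-(\wt\bbA_{\fD}^t)^2}]_s).
\end{equation*}
In the trace-class cases of part (i) --- either $\eth^V_{\fD}$ genuinely invertible, or perturbed so that $N^G_{11,0}$ vanishes as in \eqref{N110} --- the families identity $\Str([\wt\bbA_{\fD}^t, T]_s) = d_B \Str(T)$ applies without correction and delivers the first formula immediately.

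For the general constant-rank case, $e^{-(\wt\bbA_{\fD}^t)^2}$ is only a $\psi$-vertical $b$-density and one must use the renormalized supertrace $\RStr{\cdot}$. The same Duhamel identity persists, but the commutator identity now carries the trace-defect quoted in the excerpt,
\begin{equation*}
\RStr{[\wt\bbA_{\fD}^t, T]_s} = d_B \RStr{T} + \text{(boundary term)},
\end{equation*}
where the boundary term is a Mellin-transform integral of the indicial families. The task is then to identify this boundary contribution with $\hat\eta(t)$ as defined in the statement. The indicial family of $\wt\bbA_{\fD}$ acts on $\cK$ with $x$-behavior governed precisely by $\bbB_{\psi_D^+} = \cl{\tfrac{dx}{x}}x\pa_x + \bbB_{\psi_D} + \cJ_{\wt\phi}$ from \eqref{HorizontalSupCon}, and passing from the $\lambda$-integral in the trace-defect formula to the projective variable $\tau = t/x^2$ via inverse Mellin transform identifies the boundary defect with the restriction of $\pa_t\wt\bbA_{\fD}^t \cdot e^{-(\wt\bbA_{\fD}^t)^2}$ to $\bhs{11,0}$, exactly as in the $b$-calculus argument of \cite[Proposition 9]{MelrosePiazza}. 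This identification is the main technical step.

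For part (ii) we substitute the explicit normal operator from Proposition \ref{prop:HeatKernels}(i): at $\bhs{11,0}$, the heat kernel factors as $\cP_\cK e^{-t(\bbB_{\psi_D}+\cJ_{\wt\phi})^2}$ times the Euclidean Gaussian $\tfrac{1}{\sqrt{4\pi t}} e^{-|\log s|^2/(4t)}$ along the indicial $\bbR$-direction. The $b$-trace localizes at $s=1$, producing the Gaussian value $\tfrac{1}{\sqrt{4\pi t}}$, which combines with the $\sqrt{t}$ appearing in the definition of $\hat\eta(t)$ to produce the factor $\tfrac{1}{2\sqrt\pi}$. The $\cl{\tfrac{dx}{x}}x\pa_x$ piece of $\bbB_{\psi_D^+}$ anticommutes with $\bbB_{\psi_D} + \cJ_{\wt\phi}$, and when combined with the $\cl{\tfrac{dx}{x}}$ insertion in $\str_\cK$ reduces $\hat\eta(t)$ to the stated integrand for the $\eta$-form of $\bbB_{\psi_D}+\cJ_{\wt\phi}$ on $D/B$; integrating over $t\in(0,\infty)$ yields the claimed expression. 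The vanishing when $\dim D/B$ is even and $\cJ_{\wt\phi}=0$ follows from the usual parity argument for Bismut-Cheeger $\eta$ forms, while the spectral-section variant follows by a standard transgression argument in the cut-off $\chi$, showing the difference between the $\wt\bbB_{\psi_D}$ and $\hat\bbB_{\psi_D}$ expressions is $d_B$-exact.
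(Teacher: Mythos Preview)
Your proposal is correct and follows essentially the same route as the paper: Duhamel plus supercommutator, identification of the defect as the restriction to $\bhs{11,0}$, substitution of the explicit normal operator from Proposition~\ref{prop:HeatKernels}, and the parity/transgression arguments for the final claims.

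Two small points worth tightening. First, the paper does not pass through the $\lambda$-integral form of the trace-defect formula; it works directly with the $x^z$-regularization, computing $[\bbA_t, x^z] = z x^z \sqrt{t}\, m_0(\tfrac{dx}{x})$ so that the boundary correction appears as $\Res_{z=0}\Str(x^z \sqrt{t}\,\cl{\tfrac{dx}{x}}\dot\bbA_t e^{-\bbA_t^2})$, which is immediately the integral over $\bhs{11,0}\cap\diag$. This is equivalent to your Mellin approach but avoids the translation step. Second, your reference to ``the projective variable $\tau = t/x^2$'' in this context is a slip: $\tau$ is the natural coordinate at $\bhs{\phi\phi,2}$, whereas the trace defect localizes at $\bhs{11,0}$, where $t>0$ is a genuine parameter and the transverse variable is $\log s = \log(x/x')$. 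The Gaussian $\tfrac{1}{\sqrt{4\pi t}}e^{-|\log s|^2/(4t)}$ is evaluated at $s=1$ on the diagonal, exactly as you describe afterward.
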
 

\begin{proof}
For the duration of the proof we lighten the notation by introducing
\begin{equation*}
	\bbA_t := \wt\bbA_{\fD}^t, \quad
	\dot \bbA_t :=  \frac{\pa \wt\bbA_{\fD}^t}{\pa t}.
\end{equation*}

First one can justify as in \cite{MelrosePiazza} using Duhamel's principle and the trace-defect formula that
\begin{equation*}
	\frac{\pa}{\pa t} \lrpar{\Ren{Ch}{ \wt \bbA_{\fD}^t } }
	= - \Ren{Str}{ \frac{\pa (\wt\bbA_{\fD}^t)^2}{\pa t} e^{-(\wt \bbA_{\fD}^t)^2} }.
\end{equation*}
Next since $\wt\bbA_{\fD}$ is odd with respect to the filtration on $\Cl_0(M/B)$, we can identify the derivative with a supercommutator
\begin{equation*}
\begin{split}
	{}^R\Str \Bigl( &\frac{\pa (\wt\bbA_{\fD}^t)^2}{\pa t} e^{-(\wt \bbA_{\fD}^t)^2}  \Bigr)
	=\Ren{Str}{ \lrspar{ \bbA_t,  \dot\bbA_t e^{-\bbA_t^2} } } \\
	&=
	\FP_{z=0} \Str
	\lrpar{ x^z \lrspar{ \bbA_t,  \dot \bbA_t e^{- \bbA_t^2 } } } \\
	&=
	\FP_{z=0} \Str \lrpar{  
	\lrspar{ \bbA_t, x^z \dot \bbA_t e^{-\bbA_t^2} } 
	- \lrspar{ \bbA_t, x^z } \dot \bbA_t e^{-\bbA_t^2} }    \\
	&=
	\FP_{z=0} \Str \lrpar{  
	\lrspar{ \bbA_{[1]}, x^z \dot \bbA_t e^{-\bbA_t^2} } 
	- zx^z t^{1/2} m_0(\tfrac{dx}x) \dot \bbA_t e^{-\bbA_t^2} }    \\
	&=
	d_B {}^R \Str
	\lrpar{ \dot \bbA_t e^{-\bbA_t^2}  }
	- \Res_{z=0} \; \Str\lrpar{x^z \sqrt t \cl{\tfrac{dx}x}  \dot \bbA_t e^{-\bbA_t^2} } \\
	&=
	d_B {}^R \Str
	\lrpar{ \dot \bbA_t e^{-\bbA_t^2}  }
	-\int_{\bhs{11,0}\cap \diag} \str
	\lrpar{ \cl{\tfrac{dx}x} \sqrt t \dot \bbA_t e^{-\bbA_t^2} \rest{\bhs{11,0}} } \\
	&=
	d_B {}^R \Str
	\lrpar{ \dot \bbA_t e^{-\bbA_t^2}  }
	- \hat\eta(t)
\end{split}
\end{equation*}
which proves ($i$). 

The convergence of the integral of $\hat\eta(t)$ follows from the existence of the limits as $t \to \infty$ and as $t\to 0$ of ${}^R\Ch( \wt \bbA_{\fD}^t )$ which will be discussed in Lemmas \ref{lem:Bigt} and \ref{lem:Smallt}.

If $\eth^{H,\cK}_{\fD}$ is invertible, then 
\begin{multline*}
	\hat\eta(t)
	=\int_{\bhs{11,0}\cap \diag} \str
	\lrpar{ \cl{\tfrac{dx}x} \sqrt t
	\frac{\pa}{\pa t} (\bbB_{\psi_D}^t + \cJ_{\wt\phi}^t)
	e^{-(\bbB_{\psi_D}^t + \cJ_{\wt\phi}^t)^2} \frac1{\sqrt{4\pi t}} e^{ -\frac{|\log s|^2}{4t} } }\\
	= 
	\frac1{ 2\sqrt{\pi} } \int_{\bhs{11,0}\cap \diag} 
	\str\lrpar{ \cl{\tfrac{dx}x} 
	\frac{\pa}{\pa t} (\bbB_{\psi_D}^t + \cJ_{\wt\phi}^t)
	e^{-(\bbB_{\psi_D}^t + \cJ_{\wt\phi}^t)^2}  }
\end{multline*}
as required.

If $\dim D/B$ is even and $\cJ_{\wt\phi}=0$ then let $Q_{D^{+}/B}$ be the involution \newline $i^{\lfloor\frac{h+1}{2}\rfloor}\cl{\dvol_{D^{+}/B}}$ and note that
\begin{gather*}
	\wt \bbB_{\psi_D}^t Q_{D^{+}/B} = Q_{D^{+}/B} \wt \bbB_{\psi_D}^t,
	\quad
	\cl{\tfrac{dx}x} Q_{D^{+}/B} = Q_{D^{+}/B} \cl{\tfrac{dx}x}, \\
	Q_{M/B} Q_{D^{+}/B} = -Q_{D^{+}/B} Q_{M/B},
	\quad 
	Q_{D^{+}/B}^2 = 1.
\end{gather*}
Hence,
\begin{equation*}
\begin{split}
	\hat\eta(t)
	&=  \frac{1}{2\sqrt{\pi}}\int_{D/B} \tr_E \lrspar{ Q_{D^{+}/B}^2 Q_{M/B} \cl{\tfrac{dx}x} \frac{\pa \bbB_{\psi_D}^t}{\pa t}
	 e^{-(\bbB_{\psi_D}^t)^2}  }
	\\&= -\frac{1}{2\sqrt{\pi}}\int_{D/B} \tr_E \lrspar{ Q_{D^{+}/B} Q_{M/B} \cl{\tfrac{dx}x} \frac{\pa \bbB_{\psi_D}^t}{\pa t}
	 e^{-(\bbB_{\psi_D}^t)^2} Q_{D^{+}/B} }
	\\&= -\frac{1}{2\sqrt{\pi}}\int_{D/B} \tr_E \lrspar{ Q_{D^{+}/B}^2 Q_{M/B} \cl{\tfrac{dx}x} \frac{\pa \bbB_{\psi_D}^t}{\pa t}
	 e^{-(\bbB_{\psi_D}^t)^2} }
	= - \hat\eta(t)
\end{split}
\end{equation*}
and so $\hat\eta(t)$ vanishes.

Finally if $\eth^{H,\cK}_{\fD}$ is being perturbed to be invertible, we appeal to the computations in \cite[Corollary 3]{MelrosePiazza} using the trace-defect formula to show that one can replace $\wt\bbB_{\psi_D}$ with $\hat\bbB_{\psi_D}$ at the cost of an exact form. The difference between our situation and theirs is the presence of the endomorphism $\cJ_{\wt\phi}$ measuring the failure of the $\fD$ metric and the Clifford bundle $E$ to be of product-type. However, one can check that this term does not affect the computations in the proof of \cite[Corollary 3]{MelrosePiazza}.
\end{proof}

Note that, even when $\dim D/B$ is even, $\hat\eta(t)$ need not vanish if $\cJ_{\wt\phi}$ is not zero.
The above argument fails because, even when the metric is of product-type at the boundary,
$\cJ_{\wt\phi} = \cP_\cK \v\phi\cl{K_E'\lrpar{dx^\flat,\cdot}} $ anti-commutes with $Q_{D^{+}/B}$ while $\wt \bbB_{\psi_D}^t$ commutes. 
More to the point, we can explicitly compute the contribution in a greatly simplified context.
Assume that $\dim D/B=0$ and $\dim M/B=2$, that $\cK$ forms a vector bundle over $B$, and the metric $g_{\fD}$ is of product-type so that
\begin{equation*}
	\bbB_{\psi_D}^t + \cJ_{\wt\phi}^t = 
	\cP_{\cK}\lrspar{
	\sqrt t K_E'(x\pa_x, \tfrac1x \pa_z) \cl{\tfrac1x\pa_z} 
	+ \df e^\alpha \nabla^E_\alpha } 
\end{equation*}
where $\alpha$ ranges over a frame for $\psi^*TB$; furthermore assume that the twisting curvature $K_E'(x\pa_x, \frac1x \pa_z)\rest{\pa M}=i C$ where $C$
is a non-zero real constant and that $\nabla^E \frac1x\pa_z$ vanishes at the boundary.
(All of these assumptions hold for a natural family of  $\bar\pa$ operators on Riemann surfaces with cusps as will be explained in a companion paper \cite{AlbinRochon}.)
Then, in this case, we have
\begin{equation}\label{h=0EtaHat}
\begin{aligned}
	\hat\eta \lrpar{ \eth^{H,\cK}_{\fD} } 
	&= -\frac{1}{2}\int_0^{\infty} \int_{\pa M/B}
	\tr_E \lrpar{ Q_{M/B} \cl{ \tfrac {dx}x } \frac{i}{2\sqrt{\pi t}} C \cl{\tfrac1x \pa_z} e^{-\bbB_{[1]}^2} e^{-tC^2} } \\
	&=  \frac{1}{4}\int_{\pa M/B}
	\tr_E \lrpar{ e^{-\bbB_{[1]}^2}}
	\int_0^{\infty} C e^{-tC^2} \frac{dt}{\sqrt{\pi t}} \\ 
	&= \frac{1}{4} \mathrm{sign}(C) \Ch(\ker \eth^V_{\fD} )=\frac{1}{4} \mathrm{sign}(C) \Ch(\ker \eth^{V,+}_{\fD}\oplus\ker \eth^{V,+}_{\fD} ) \\
	&= \frac{1}{2} \mathrm{sign}(C) \Ch(\ker \eth^{V,+}_{\fD} ),
\end{aligned}
\end{equation}
where in the line before the last, we have used the fact that, under the $\bbZ_{2}$ grading,
$\ker \eth^{V}_{\fD}$ decomposes as
\begin{equation}
     \ker\eth^{V}_{\fD}= \ker \eth^{V,+}_{\fD}\oplus \ker\eth^{V,-}_{\fD}
\label{dec}\end{equation}
with $\ker \eth^{V,-}_{\fD}$ canonically identified with $\ker\eth^{V,+}_{\fD}$ via the 
Clifford action of $\cl{ \tfrac {dx}x }$.
Note that
\begin{equation}\label{h=0Eta}
	\lrspar{ \hat\eta \lrpar{ \eth^{H,\cK}_{\fD} } }_{[0]} 
	= \frac12\mathrm{sign}(C) \dim (\ker \eth^{V,+}_{\fD} )
\end{equation}
holds without the assumption that $\nabla^E \frac1x\pa_z$ vanishes at the boundary.

Back to the general case, we next assume that $\ker \eth_{\fD}$ and $\coker \eth_{\fD}$ are vector bundles over $B$, potentially after a smoothing perturbation. If $B$ is compact and $\eth_{\fD}$ has been perturbed to be Fredholm then, as pointed out at the end of \S\ref{sec:Perturbations}, this can always be arranged. However if $B$ is not compact, then we must make this assumption to start with.
We denote the resulting $\bbZ/2$ graded bundle by $\Ind(\eth_{\fD})$ in the unperturbed case and by $\Ind_P(\eth_{\fD})$ when $\eth_{\fD}$ is being perturbed by an operator associated to the spectral section $P$. We will denote it by $\wt \Ind (\eth_{\fD})$ when we do not want to specify.
The orthogonal projection $\cP_{\Ind}$ onto this bundle is a smoothing compact operator which we use to contract the Levi-Civita connection to form
\begin{equation*}
	\nabla^{\Ind} := \cP_{\Ind} \lrpar{ \wt \bbA_{\fD}}_{[1]} \cP_{\Ind}.
\end{equation*}

The following lemma can be proven just as in \cite[Proposition 15]{MelrosePiazza} by applying the argument of Berline and Vergne \cite{BV}.

\begin{lemma}[\cite{BGV}, Theorem 9.23; \cite{MelrosePiazza}, Proposition 15] \label{lem:Bigt}
If $\eth_{\fD}$ is Fredholm or has been perturbed to be Fredholm, the Chern character (renormalized if necessary) of the $\fD$ superconnection $\wt \bbA_{\fD}$ satisfies
\begin{equation*}
\lim_{t\to\infty} \Ren{Ch}{\wt \bbA_{\fD}^t}
= \Ch\lrpar{ \wt \Ind\lrpar{\eth_{\fD}}, \nabla^{\Ind} }.
\end{equation*}
\end{lemma}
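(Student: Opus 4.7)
The plan is to adapt the Berline--Vergne argument, in the form used in \cite[Proposition 15]{MelrosePiazza} for the $b$-calculus, to the present $\fD$ setting. Under the rescaling \eqref{RescaledA}, $\wt\bbA_{\fD}^t = t^{1/2}\eth_{\fD} + \lrpar{\wt\bbA_{\fD}}_{[1]} + t^{-1/2}\lrpar{\wt\bbA_{\fD}}_{[2]} + \cdots$ (with $\eth_{\fD}$ replaced by its Fredholm perturbation if applicable). As $t\to\infty$ the leading term forces the heat kernel of the square to concentrate on $\wt\Ind\lrpar{\eth_{\fD}}$, and the elementary finite-dimensional computation of Berline--Vergne then identifies the limit of the supertrace on this finite-dimensional space with $\Ch\lrpar{\wt\Ind\lrpar{\eth_{\fD}}, \nabla^{\Ind}}$.

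To implement this, I would split $L^2(M/B;E) = \Ran \cP_{\Ind} \oplus \Ran(\Id - \cP_{\Ind})$. On the orthogonal complement the operator $\eth_{\fD}$ (suitably perturbed) has a strictly positive spectral gap, so a Duhamel expansion shows that $(\Id-\cP_{\Ind})e^{-(\wt\bbA_{\fD}^t)^2}(\Id-\cP_{\Ind})$ and its contribution to the (renormalized) supertrace decay exponentially in $t$. On $\Ran \cP_{\Ind}$, a direct manipulation using the rescaling \eqref{RescaledA} yields
\begin{equation*}
	\lim_{t\to\infty}\cP_{\Ind} \exp\lrpar{-(\wt\bbA_{\fD}^t)^2}\cP_{\Ind}
	= \exp\lrpar{ -(\nabla^{\Ind})^2 },
\end{equation*}
whose supertrace is by definition $\Ch\lrpar{\wt\Ind\lrpar{\eth_{\fD}}, \nabla^{\Ind}}$. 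In the large-perturbation case the normal operator at $\bhs{11,0}$ vanishes identically by \eqref{N110}, so the heat kernel is of trace class and no renormalization is required; the argument above then applies directly.

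The main obstacle is handling the renormalization in the unperturbed constant-rank case, where the normal operator
\begin{equation*}
	N^G_{11,0}\lrpar{e^{-t\bbA_{\fD}^2}} = \cP_\cK \, e^{-t(\bbB_{\psi_D}+\cJ_{\wt\phi})^2}
	\left[\tfrac{1}{\sqrt{4\pi t}} e^{-|\log s|^2/(4t)}\right]
\end{equation*}
is nontrivial. Here the difference between the renormalized and usual supertraces, computed by the trace-defect formula as an integral of the indicial family of $e^{-(\bbA_{\fD}^t)^2}$ along the real line, must be shown to vanish as $t\to\infty$. The invertibility of $\eth^{H,\cK}_{\fD} = (\bbB_{\psi_D}+\cJ_{\wt\phi})_{[0]}$ provides a uniform lower bound on the indicial family of $\bbA_{\fD}$ along $\bbR$, so the heat kernel of its square decays exponentially in $t$ and the renormalization defect tends to zero. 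A parallel argument handles the small-perturbation case via the invertibility of $\eth^{H,\cK}_{\fD} + Q_R^\eps$. With these ingredients in place, the argument reduces essentially to that of \cite[Proposition 15]{MelrosePiazza} and \cite[Theorem 9.23]{BGV}.
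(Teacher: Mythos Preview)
Your proposal is correct and follows exactly the approach the paper intends: the paper gives no proof at all beyond the sentence ``can be proven just as in \cite[Proposition 15]{MelrosePiazza} by applying the argument of Berline and Vergne \cite{BV},'' and your write-up is a faithful expansion of precisely that argument adapted to the $\fD$ heat calculus. One small wording issue: in the constant-rank case there is no ``usual supertrace'' to compare the renormalized one against (the heat kernel is genuinely not trace class), so what you really want to say is that the boundary contribution to $\RStr{e^{-(\wt\bbA_{\fD}^t)^2}}$ coming from $\bhs{11,0}$ decays as $t\to\infty$ by the invertibility of $\eth^{H,\cK}_{\fD}$ (or its perturbation); but this is exactly what your argument establishes.
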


The last piece we need is the behavior of the supertrace of the heat kernel as $t \to 0$. To compute this we need to keep track of the three rescalings, one from replacing $\bbA_{\fD}$ with $\bbA_{\fD}^t$, and two from changing the homomorphism bundle at $\bhs{\phi\phi,2}$ and $\bhs{00,2}$.

Before stating the result we recall the definition of the twisted supertrace and the Bismut-Cheeger $\hat \eta$ form.
The former is denoted $\str'$ and defined in \cite{APS Book} as follows. Denote the grading operator on $E$ by $Q_E$ so that $\str_E\lrpar\cdot = \tr_E\lrpar{Q_E\cdot}$. Because $E$ is a $\Cl\lrpar{M/B}$ module we can write $Q_E = Q_{\Cl\lrpar{M/B}} \otimes Q_E'$ where $Q_{\Cl\lrpar{M/B}}$ is the grading operator on $\Cl\lrpar{M/B}$ and $Q_E'$ supercommutes with Clifford multiplication, then if $A\otimes B \in \Cl\lrpar{M/B} \otimes \hom'_{M/B}\lrpar E$
\begin{equation}
	\str_E\lrpar{A\otimes B} = \tr\lrpar{Q_{\Cl\lrpar{M/B}}A}\tr\lrpar{Q_E'B}
	=:\str_{\Cl\lrpar{M/B}}\lrpar A \str'\lrpar B.
\label{mar26.1}\end{equation} 
The twisted Chern character is defined as $\str'(e^{-t(K_E')^2})$ where $K_E'$ is the twisting curvature (the part of the curvature that anti-commutes with Clifford multiplication).

When $\left. E\right|_{\pa M}$ is seen as a $\Cl(D^{+}/B)$ Clifford module, there is also a notion of relative supertrace.  Namely, if 
$Q_{\Cl(\pa M/D)}$ is the vertical grading operator, then we define the
relative supertrace to be
\begin{equation}
   \str_{E/D^{+}}(L)= \tr (Q_{\Cl(\pa M/D)} L), \quad L\in 
\hom_{\Cl(D^{+}/B)}(E).
\label{relsupt}\end{equation}
Instead of \eqref{mar26.1}, we have in this case that 
\begin{equation}
   \str_{E}(A \hat{\otimes} L)= \left\{    
                \begin{array}{ll}
                   \str_{D^{+}/B}(A) \str_{E/D^{+}}(L), &  h \quad \mbox{odd}, \\
i \str_{D^{+}/B}(A) \str_{E/D^{+}}(L), &  h \quad \mbox{even},
\end{array}
\right.
\label{mar26.2}\end{equation}
for 
$A\hat{\otimes} L\in \Cl(D^{+}/B)\hat{\otimes} \hom_{\Cl(D^{+}/B)}(E)$.
The extra $i$ factor in the case $h$ is even comes from the fact (\cf p.105 in 
\cite{Vaillant})
\begin{equation}
  Q_{\Cl(M/B)}= \left\{
              \begin{array}{ll}
                  Q_{\Cl(D^{+}/B)}Q_{\Cl(\pa M/D)}, & h \quad\mbox{odd}, \\
-iQ_{\Cl(D^{+}/B)}Q_{\Cl(\pa M/D)}, & h \quad \mbox{even}, \\
\end{array}
\right.
\label{mar26.3}\end{equation}
together with the fact $Q_{\Cl(\pa M/D)}$ anti-commutes with $A$ whenever
$h$ is even and $\str_{E}(A\hat{\otimes}L)$ is non-zero.  Notice that our
definition \eqref{relsupt} of the relative supertrace differs from the one of
Vaillant when $h$ is even.  

If $Z' \to X' \xrightarrow{\phi'} Y'$ is a fibration, and $\bbA'_t$ is the rescaled Bismut superconnection associated to a submersion metric, then the Bismut-Cheeger $\hat\eta$ form is defined (up to powers of $2$ and $\pi$) to be
\begin{equation*}
	\int_0^\infty
	\Str\lrspar{ 
	\lrpar{\eth - \frac1{4t} m_0^{\phi'}\lrpar{\Omega_{\phi'}} }
	e^{-(\bbA'_t)^2} } \frac{dt}{2\sqrt t}
\end{equation*}
if the dimension of the fibers is even and 
\begin{equation*}
	\frac1{\sqrt\pi} 
	\int_0^\infty
	\Tr^\ev\lrspar{ 
	\lrpar{\eth - \frac1{4t} m_0^{\phi'}\lrpar{\Omega_{\phi'}} }
	e^{-(\bbA'_t)^2} } \frac{dt}{2\sqrt t}
\end{equation*}
if the dimension of the fibers is odd. As a form on $Y'$, $\hat \eta$ is even (respectively odd) when the dimension of the fiber is odd (respectively even).

Below we will denote by $\hat\eta^+$ forms that differ from the Bismut-Cheeger $\hat\eta$ in the same way that $\bbB_{\pa M/D, \fD}$ differs from the Bismut superconnection of $\wt\phi$, i.e., in the inclusion of the tensor $\cB_{\wt\phi-hc}$ which measures the failure of $g_{\fD}$ to be of product-type at the boundary. 
Explicitly, if $D/B$ is even, $\hat \eta^+(\eth^V_{\fD})$ is given by
\begin{equation*}
	\int_0^{\infty}
	\Str\lrspar{
	\lrpar{\eth^V_{\fD} - \frac1{\sqrt\tau} m_0^{\wt\phi}\lrpar{B_{\wt\phi}} 
	- \frac1{4\tau} m_0^{\wt\phi}\lrpar{\Omega_{\wt\phi}} }
		e^{-(\wt\bbB_{\pa M/D,\fD}^\tau)^2} }
	\frac{d\tau}{\sqrt\tau}
\end{equation*}
and {\em mutatis mutandis} when $D/B$ is odd.
In particular, if there is an extension of $\wt\phi$ to a collar neighborhood of the boundary for which $g_{\fD}$ is a submersion to second order, then $\hat\eta^+$ coincides with $\hat\eta$. We will also use the Melrose-Piazza invariants $\hat\eta_P^+$ defined just like $\hat\eta^+$ but with $\eth^V_{\fD}$  replaced by the addition of a large perturbation associated to $P$, $\eth^V_{\fD} + A_P^\eps$. Just as in \cite{MelrosePiazza}, different choices of $A_P^\eps$ corresponding to the same spectral section $P$ only change $\hat\eta_P^+$ by an exact form.

\begin{lemma}\label{lem:Smallt}
The Chern character of the rescaled Bismut superconnection, $\wt \bbA_{\fD}^t$, has a limit as $t\to 0$ which depending on the perturbation is given by:
\begin{itemize}
\item 
If $\eth^V_{\fD}$ has kernel of constant rank and $\eth^{H,\cK}_{\fD}$ is invertible or has been perturbed to be invertible then
\begin{multline}\label{Time0Unperturbed}
\lim_{t\to0} \Ren{Ch}{\wt \bbA_{\fD}^t} 
= \frac1{\lrpar{2\pi i}^{n/2}} \int_{M/B} \hat A\lrpar{M/B}\Ch'\lrpar E \\
+ \frac{1}{\lrpar{2\pi i}^{\lfloor\frac{h+1}{2}\rfloor}} \int_{D/B} \hat A\lrpar{D/B}\hat\eta^+\lrpar{\eth^V_{\fD},E}
\end{multline}

\item If $\eth^V_{\fD}$ is invertible or has been perturbed to be invertible by a large perturbation associated to $P$, then 
\begin{multline}\label{Time0Perturbed}
\lim_{t\to0} \Ch\lrpar{ \wt \bbA_{\fD}^t } 
= \frac1{\lrpar{2\pi i}^{n/2}} \int_{M/B} \hat A\lrpar{M/B}\Ch'\lrpar E \\
+ \frac1{\lrpar{2\pi i}^{\lfloor\frac{h+1}{2}\rfloor}} \int_{D/B} \hat A\lrpar{D/B}\hat\eta^+_P\lrpar{\eth^V_{\fD},E}.
\end{multline}
The limit (in cohomology)
only depends on the choice of $P$ and not on the choice of associated perturbation.
\end{itemize}
\end{lemma}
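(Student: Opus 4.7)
The plan is to identify separately the contributions to $\lim_{t\to 0} {}^R\Ch(\wt \bbA_{\fD}^t)$ coming from the two boundary hypersurfaces of $HM_{\fD,\psi}$ lying over $\{t=0\}$, namely $\bhs{00,2}$ and $\bhs{\phi\phi,2}$, using the explicit rescaled normal operators recorded in Proposition \ref{prop:HeatKernels}. The face $\bhs{11,0}$ lies over $\{t>0\}$ and so does not enter the $t\to 0$ limit directly; in the large-perturbation case the heat kernel vanishes identically there by \eqref{N110}, while in the unperturbed constant-rank case the renormalization defect at $\bhs{11,0}$ is absorbed into the finite-part prescription and does not produce new terms in the pointwise limit $t\to 0$.

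For the contribution at $\bhs{00,2}$, the rescaled normal operator $e^{-\cH_{M/B}(\zeta)} e^{-\frac12 Q_{00,2}(\zeta)} e^{-x^2 K_E'}$ is the heat kernel of a generalized harmonic oscillator on the fibres of $\FC TM/B \to M$. Applying Mehler's formula, evaluating at the zero section, and taking the rescaled supertrace — which converts Clifford degree into exterior form degree via $\Cl^k/\Cl^{k-1} \cong \Lambda^k$ — produces the standard Bismut local index integrand. Pushing forward to $B$ yields
\[
  \frac{1}{(2\pi i)^{n/2}} \int_{M/B} \hat A(M/B) \Ch'(E),
\]
exactly as in \cite[Chapter 10]{BGV} applied fibrewise.

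For the contribution at $\bhs{\phi\phi,2}$, the rescaled normal operator factors into the heat kernel $e^{-\tau \cH_{D^+/B}(\hat\zeta)} e^{-\frac12 Q_{\phi\phi,2}(\hat\zeta)}$ of a harmonic oscillator on the fibres of $\FC N \pa M/B \to \pa M$ together with $e^{-\tau \wt \bbB^2_{\pa M/D,+}(\tau,\eps)}$. Mehler applied to the first factor produces $\hat A(D/B)$ and localises $\hat\zeta$ at the zero section. Using the projective time variable $\tau = t/x^2$ and pushing the renormalized supertrace through the change of variable from $x$ to $\tau$ near the boundary, the finite-part regularization becomes a Mellin transform in $\tau$ that selects precisely the Bismut-Cheeger-type integrated eta form $\hat\eta^+(\eth^V_{\fD},E)$ (respectively $\hat\eta^+_P$ in the large-perturbation case) of the $\wt\phi$-vertical family. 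The superscript $+$ reflects the presence of the $\cB_{\wt\phi-hc}$ term in $\wt \bbB_{\pa M/D,+}$ coming from the failure of $g_{\fD}$ to be of product type at the boundary; this is exactly the extra piece encoded in the rescaled normal operator \eqref{NPhiPhi}.

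The principal difficulty will lie in the boundary contribution, and specifically in the justification of the interchange of limits $x\to 0$, $t\to 0$ with the supertrace and the $\tau$-integration. Convergence of the $\tau$-integral at $\tau\to 0$ follows from the small-time asymptotics of the harmonic oscillator (yielding the $\hat A$-genus of $D/B$), while convergence at $\tau \to \infty$ is precisely where the invertibility of $\eth^V_{\fD}$, or its Fredholm perturbation by $A_P^\eps$, enters essentially — this is what is bought by the assumption of the lemma in each of its two cases. The remaining bookkeeping — tracking the powers of $(2\pi i)$, the sign conventions separating $\hat\eta$ from $\hat\eta^+$, and the compatibility of the Getzler rescalings at $\bhs{00,2}$ and $\bhs{\phi\phi,2}$ along their common corner — is routine. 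Independence of the limit in cohomology from the choice of representative $A_P^\eps$ of the spectral section $P$ then follows by the standard transgression argument of \cite[Theorem 8]{MelrosePiazza}.
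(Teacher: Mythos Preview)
Your overall strategy matches the paper's: split the $t\to 0$ limit into contributions from $\bhs{00,2}$ and $\bhs{\phi\phi,2}$ and evaluate each via the rescaled normal operators of Proposition \ref{prop:HeatKernels}. Your treatment of the $\bhs{00,2}$ face is correct and essentially what the paper does.

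There are, however, two imprecisions in your handling of $\bhs{\phi\phi,2}$. First, the $\hat\eta^+$ form does not arise from ``a Mellin transform in $\tau$ selected by the finite-part regularization.'' The renormalization is in $x$, not in $\tau$; what actually happens is that the push-forward theorem (equivalently Vaillant's Lemma 5.27) splits $\lim_{t\to 0}{}^R\Str$ into two \emph{ordinary} integrals over $\bhs{00,2}\cap\diag_H$ and $\bhs{\phi\phi,2}\cap\diag_H$, and the $\tau$-integral over the latter face is simply the defining integral of $\hat\eta^+$. The substantive step you gloss over is the interaction between the Bismut rescaling $\delta_t^B$ and the Getzler rescaling at $\bhs{\phi\phi,2}$: one writes $\delta_t^B = \delta_B^{x^2}\,\delta_B^\tau$ and observes that $\delta_B^{x^2}$ exactly cancels against the $\Cl_0(D^+)$ filtration, so that the supertrace picks out the top $\Cl(D^+/B)$-degree piece of the rescaled normal operator. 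This is what makes the limit finite and identifies it with $\hat A(D/B)$ times the $\df e(\frac{dx}x)$-component of $e^{-\tau\wt\bbB^2_{\pa M/D,+}}$.

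Second, your claim that convergence at $\tau\to\infty$ ``is precisely where the invertibility of $\eth^V_{\fD}$ \ldots\ enters'' is wrong in case (i): there $\eth^V_{\fD}$ has nontrivial constant-rank kernel and is \emph{not} invertible, yet the integral still converges. The paper instead attributes convergence of both face integrals to the vanishing of the integrands at the corners $\bhs{11,0}\cap\bhs{\phi\phi,2}$ and $\bhs{\phi\phi,2}\cap\bhs{00,2}$, and this vanishing is a structural consequence of the commutation of $\cH_{D^+/B}$ and $\bbB^2_{\pa M/D,\fD}$ with $\df e(\frac{dx}x)$ (the remark following Proposition \ref{RescaledNormalOp}), not a spectral hypothesis on $\eth^V_{\fD}$.
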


\begin{proof}
The small-time limit of the renormalized supertrace of an element 
$Q \in \Psi^{2,2,0}_{RHeat,\psi}\lrpar{M;E}$ can be computed using the push-forward theorem from \cite{Melrose:Corners}  applied to $\diag_H \to \bbR^+_t$, or directly as in \cite[Lemma 5.27]{Vaillant}, to be
\begin{equation*}
	\lim_{t\to0} \Ren{Str}{Q} = 
	\int_{\bhs{00,2}\cap \diag_H} \str\lrpar Q\rest{\bhs{00,2}}
	+ \int_{\bhs{\phi\phi,2}\cap \diag_H} \str\lrpar Q\rest{\bhs{\phi,\phi,2}}.
\end{equation*}
Convergence of the two integrals follows from the vanishing of the integrands at the corners $\bhs{11,0}\cap \bhs{\phi\phi,2}$ and $\bhs{\phi\phi,2}\cap \bhs{00,2}$ which in turn follows from the remark immediately after the statement of Proposition \ref{RescaledNormalOp}.

We make use of two well-known facts to reduce this computation to the rescaled normal operators from Proposition \ref{prop:HeatKernels}. The first is that \cite[Lemma 10.22]{BGV}
\begin{equation*}
	\str(e^{-(\wt \bbA_{\fD}^t)^2}) = \delta_t^B \lrpar{ \str( e^{-t\wt \bbA_{\fD}^2} ) }
\end{equation*}
while the second is the fundamental observation of Patodi that the supertrace in $\Cl(V)$ only depends on the terms with  Clifford degree equal to $\dim V$.

Consider first the contribution from $\bhs{00,2}$ (which is the same for the perturbed and unperturbed superconnections).
We know that the heat kernel has an expansion at this face of the form
\begin{equation*}
	k \sim t^{-n/2} \sum_{\ell \geq 0} t^{\ell/2} k_\ell
\end{equation*}
and since the heat kernel is in the rescaled calculus each $k_\ell$ has total degree at most $\ell$ with respect to the filtration by $\Cl_0(M) = \Cl(M/B) \otimes \Lambda B$.
By the two well-known facts above,
\begin{equation*}
	\delta_B^t \str(k) \sim \sum_{\ell \geq 0} t^{(\ell-\bfN_B)/2} \str(\wt k_\ell)
\end{equation*}
where each $k_\ell$ has been replace by the part with total degree $n$ with respect to the filtration by $\Cl(M/B)$. 
The fact that the total degree of $k_{\ell}$ with respect to $\Cl_0(M)$ is at most $\ell$ shows that this expansion is non-singular at $t=0$ and has limit equal to the supertrace of the part of $k$ with maximal degree with respect to this filtration. This is precisely the supertrace of a volume form ($(-2i)^{n/2}$) times the $n$-form part of the rescaled normal operator,
\begin{equation*}
	(-2i)^{n/2} \mathrm{Ev}_n^{M/B}
	\lrspar{ N^G_{00,2}\lrpar{ e^{-t\wt \bbA_{\fD}^2}} }.
\end{equation*}
So using the formula for $N^G_{00,2}\lrpar{\bbA}$ from Proposition \ref{prop:HeatKernels} and the fact that
$e^{-\cH_{M/B}\lrpar \zeta}e^{-\frac12 Q_{00,2}\lrpar \zeta}\rest{\zeta=0} = (4\pi)^{-n/2}\hat A\lrpar{M/B}$ we get
\begin{multline*}
	\int_{\bhs{00,2}\cap \diag_H} \delta_\tau
		\str\lrpar{e^{-t\bbA_{\fD}^2}}\rest{\bhs{00,2}} \\
	=\int_{\bhs{00,2}\cap \diag_H} \lrpar{-2i}^{n/2}
	 \mathrm{Ev}_n^{M/B}\lrspar{N^G_{00,2}\lrpar{e^{-t\bbA_{\fD}^2}}\rest{\diag_H}} \\
	= \frac1{\lrpar{2\pi i}^{n/2}}\int_{\bhs{00,2}\cap \diag_H}  
	\mathrm{Ev}_n^{M/B}\lrspar{ \hat A\lrpar{M/B} \str'\lrpar{e^{-t\lrpar{K_E'}^2}}}.
\end{multline*}

A similar analysis applies to $\bhs{\phi\phi,2}$.
Notice that at this face the rescaling by $t$, $\delta_B^t$, becomes $\delta^{x^2}_B \delta_B^\tau$.
The effect of $\delta^{x^2}_B$ at this face is just like the effect of $\delta_B^t$ at $\bhs{00,2}$: it compensates 
for the fact that the rescaling is with respect to $\Cl_0(D^+)$ and the supertrace picks out the term of highest degree with respect to $\Cl(D^+/B)$. 
Thus, from \eqref{mar26.2} and standard formulas for the supertrace of 
Clifford algebras (see for instance formulas (1.23) and (1.24) in 
\cite{BC0}), we see that  as $k$ approaches $\bhs{\phi\phi,2}$, $\delta_B^{t} \str(k)$ approaches
\begin{equation}\label{tocheck}
\begin{gathered}
	(-2i)^{\frac{h+1}{2}} \mathrm{Ev}_{h+1}^{D^+/B}
	\lrspar{ \str'N^G_{\phi\phi,2}\lrpar{ \delta_B^\tau e^{-t\wt \bbA_{\fD}^2}} },\quad \mbox{for $h$ odd},  \\
	(-2i)^{\frac{h}{2}} \mathrm{Ev}_{h+1}^{D^+/B}
	\lrspar{ \str'N^G_{\phi\phi,2}\lrpar{ \delta_B^\tau e^{-t\wt \bbA_{\fD}^2}} },\quad \mbox{for $h$ even}.	
	\end{gathered}
\end{equation}
Now however, the twisting term is more interesting.  Starting from \eqref{tocheck} and proceeding essentially as in p.98 of \cite{Vaillant}, we get
\begin{equation*}
\begin{gathered}
	\frac1{\lrpar{2\pi i}^{\frac{h+1}{2}}} 
	\mathrm{Ev}_{h+1}^{D^+/B}\lrspar{ \delta_B^\tau \hat A\lrpar{\tau R^{D^+/B}}
	 \str_{E/D^{+}}'\lrpar{e^{-\tau \wt \bbB_{\pa M/D,+}^2 } } }, \quad \mbox{for $h$ odd}, \\
	\frac1{\lrpar{2\pi i}^{\frac{h}{2}}2\sqrt{\pi}} 
	\mathrm{Ev}_{h+1}^{D^+/B}\lrspar{ \delta_B^\tau \hat A\lrpar{\tau R^{D^+/B}}
	 \str_{E/D^{+}}'\lrpar{e^{-\tau \wt \bbB_{\pa M/D,+}^2 } } }, \quad \mbox{for $h$ even},
\end{gathered}
\end{equation*}
where we recall that
\begin{equation*}
	\wt \bbB_{\pa M/D,+}^2 = \wt\bbB_{\pa M/D,\fD}^2 
	+\df e\lrpar{\frac{dx}x}
	\lrspar{\eth^V_{\fD} + \hat{A}_P^\eps - m_0^{\wt\phi}\lrpar{B_{\wt\phi}} 
	- \frac14 m_0^{\wt\phi}\lrpar{\Curv{\wt\phi-hc}} }.
\end{equation*}
Notice that $\delta_B^\tau \hat A\lrpar{\tau R^{D^+/B}} =  \hat A\lrpar{D/B}\delta_{D^+}^\tau$ and, since $\df e\lrpar{\frac{dx}x}$ commutes with $\hat A\lrpar{D/B}$ (as noted after Proposition \ref{RescaledNormalOp}), that the 
$\str'$ factor can be replaced by the $\str'$ of the part of $\delta_{D^+}^\tau e^{-\tau \cA }$ that contains $\df e\lrpar{\frac{dx}x}$ which is
\begin{equation*}
	\frac1{\sqrt\tau}
	\df e\lrpar{\frac{dx}x}
	\lrspar{\eth^V_{\fD} + A_P^\eps - \frac1{\sqrt\tau} m_0^{\wt\phi}\lrpar{B_{\wt\phi}} 
	- \frac1{4\tau} m_0^{\wt\phi}\lrpar{\Curv{\wt\phi-hc}} }
		e^{-(\wt\bbB_{\pa M/D,\fD}^\tau)^2}.
\end{equation*}
Let us briefly refer to this term as $\df e(\frac{dx}x)\cA$. The contribution from $\bhs{\phi\phi,2}$ is thus
\begin{equation*}
	\frac1{\lrpar{2\pi i}^{\frac{h+1}{2}}}\int_{D/B}  
	\hat A\lrpar{D/B} 
	\int_0^{\infty}
	\str_{E/D^{+}}'\left( \textstyle \df e(\frac{dx}x) \displaystyle \cA\right) \;d\tau
\end{equation*}
when $h$ is odd and 
\begin{equation*}
	\frac1{\lrpar{2\pi i}^{\frac{h}{2}}2\sqrt{\pi}}\int_{D/B}  
	\hat A\lrpar{D/B} 
	\int_0^{\infty}
	\str_{E/D^{+}}'\left( \textstyle \df e(\frac{dx}x) \displaystyle \cA\right) \;d\tau
\end{equation*}
when $h$ is even.  In both cases, this can be rewritten as
\[
     \frac{1}{(2\pi i)^{\lfloor \frac{h+1}{2}\rfloor}} \int_{D/B}
\hat A\lrpar{D/B} \hat\eta^+\lrpar{\eth^V_{\fD},E}
\]
where $\hat\eta^+\lrpar{\eth^V_{\fD},E}$ is the odd eta form when
$h$ is odd,
\begin{equation}
\hat\eta^+\lrpar{\eth^V_{\fD},E}=  \int_{0}^{\infty} \str_{E/D^{+}}'\left( \textstyle \df e(\frac{dx}x) \displaystyle \cA\right) \;d\tau
\label{eta_odd}\end{equation}
and is the even eta form when $h$ is even,
\begin{equation}
\hat\eta^+\lrpar{\eth^V_{\fD},E}= \frac{1}{2\sqrt{\pi}} \int_{0}^{\infty} 
\str_{E/D^{+}}'\left( \textstyle \df e(\frac{dx}x) \displaystyle \cA\right) \;d\tau.
\label{eta_even}\end{equation}

\end{proof}

{\bf Remark} When we are in the product-type situation (\ie $\cB_{\wt\phi-hc}=0)$ and there is no Fredholm perturbation, our conventions are such that, when $h$ is odd, $\hat\eta^+\lrpar{\eth^V_{\fD},E}$ corresponds to the 
odd  eta form of Bismut and Cheeger associated to the family $\eth^V_{\fD}$.  
When $h$ is even, the relation with the eta form of Bismut
and Cheeger is as follows.  On the boundary, the map
\[
                  \begin{array}{lcl}
                      T^{*}(\pa M/D) & \to & \Cl(M/D) \\
                           \xi & \mapsto & \frac{dx}{x}\cdot \xi 
                  \end{array}
\]
induces an isomorphism of algebra $\Cl(\pa M/D)\cong \Cl^{0}(M/D)$.  Thus
on $\pa M$, $E^{0}:= \left. E^{+}\right|_{\pa M}$ is naturally a $\Cl(\pa M/D)$
Clifford module.  Moreover, Clifford multiplication by $\textstyle \df e(\frac{dx}x)$ naturally gives an identification 
\[
\begin{array}{lcl}
                      \left. E^{-}\right|_{\pa M} & \to & E^{0} \\
                           \xi & \mapsto & \frac{dx}{x}\cdot \xi. 
                  \end{array}
\]
Under the identification
\[
           \left. E\right|_{\pa M}= \left. E^{+}\right|_{\pa M}\oplus \left. E^{-}\right|_{\pa M}\cong E^{0}\oplus E^{0},
\]
the vertical family $\eth^{V,+}_{\fD}$ takes the form
\[
                 \eth^{V,+}_{\fD}= \left( 
                      \begin{array}{cc}
                           0 & \eth^{0} \\
                       \eth^{0} & 0
                       \end{array}
 \right)
\]
where $\eth^{0}$ is the family of Dirac type operators associated to the
$\Cl(\pa M/D)$ Clifford module $E^{0}$ and its induced Clifford connection.
When $h$ is even, our conventions precisely insure that $\hat{\eta}^{+}(\eth^{V}_{\fD},E)$ is the odd Bismut-Cheeger eta form associated to the family 
$\eth^{0}$.

Integrating the formulas in Lemma \ref{lem:dt} from $t=0$ to $t=\infty$ and using Lemmas \ref{lem:Bigt} and \ref{lem:Smallt} to evaluate the limits, we get the $\fD$ families index theorem.

\begin{theorem}\label{FamiliesIndex} $ $
\begin{itemize}
\item [i)]
If $\eth^V_{\fD}$ has kernel of constant rank and $\eth^{H,\cK}_{\fD}$ is invertible, then 
the Chern character of the (unperturbed) Bismut superconnection satisfies the following equation at the level of forms
\begin{multline}\label{UnperturbedIndex}
\Ch\lrpar{ \Ind\lrpar{\eth_{\fD}}, \nabla^{\Ind} }
= \frac1{\lrpar{2\pi i}^{n/2}} \int_{M/B} \hat A\lrpar{M/B}\Ch'\lrpar E \\
- \frac1{\lrpar{2\pi i}^{\lfloor\frac{h+1}{2}\rfloor}} \int_{D/B} \hat A\lrpar{D/B}\hat\eta^+\lrpar{ \eth^V_{\fD} } 
- \hat\eta\lrpar{ \eth^{H,\cK}_{\fD} } \\
- d \int_0^{\infty} \Str\lrpar{ \frac{\pa}{\pa t} \bbA_{\fD}^t e^{-\lrpar{\bbA_{\fD}^t}^2} }.
\end{multline}

\item [ii)]
If $\eth^V_{\fD}$ has kernel of constant rank and $Q$ is a spectral section for $\eth^{H,\cK}_{\fD}$ then, in $H^{\ev}B$, we have
\begin{multline}\label{SmallPerturbedIndex}
\Ch\lrpar{ \Ind_Q\lrpar{\eth_{\fD}} }
= \frac1{\lrpar{2\pi i}^{n/2}} \int_{M/B} \hat A\lrpar{M/B}\Ch'\lrpar E  \\ 
- \frac1{\lrpar{2\pi i}^{\lfloor\frac{h+1}{2}\rfloor}} \int_{D/B} \hat A\lrpar{D/B}\hat\eta^+\lrpar{ \eth^V_{\fD} }
- \hat\eta_Q\lrpar{ \eth^{H,\cK}_{\fD} }.
\end{multline}

\item [iii)]
If $P$ is a spectral section for $\eth^V_{\fD}$ then the following equation in $H^\ev B$
\begin{multline}\label{PerturbedIndex}
\Ch\lrpar{ \Ind_P\lrpar{\eth_{\fD}} } 
= \frac1{\lrpar{2\pi i}^{n/2}} \int_{M/B} \hat A\lrpar{M/B}\Ch'\lrpar E \\
- \frac1{\lrpar{2\pi i}^{\lfloor\frac{h+1}{2}\rfloor}} \int_{D/B} \hat A\lrpar{D/B}\hat\eta^+_P\lrpar{ \eth^V_{\fD} }.
\end{multline}
\end{itemize}
\label{index_theorem}\end{theorem}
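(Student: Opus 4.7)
The plan is to apply the standard transgression-of-Chern-character argument, using the three lemmas already established in this section. The fundamental identity, obtained from the fundamental theorem of calculus applied to the function $t \mapsto \Ren{Ch}{\wt\bbA_{\fD}^t}$, reads
\begin{equation*}
	\lim_{t\to\infty} \Ren{Ch}{\wt\bbA_{\fD}^t} - \lim_{t\to 0} \Ren{Ch}{\wt\bbA_{\fD}^t}
	= \int_0^\infty \frac{\pa}{\pa t}\Ren{Ch}{\wt\bbA_{\fD}^t}\, dt.
\end{equation*}
The left-hand side is, by Lemma \ref{lem:Bigt}, $\Ch(\wt\Ind(\eth_{\fD}),\nabla^{\Ind})$ minus the interior Atiyah--Singer term together with the $\hat\eta^+$ boundary form supplied by Lemma \ref{lem:Smallt}. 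The right-hand side is, by Lemma \ref{lem:dt}(i), the sum of an exact form $-d_B \int_0^\infty {}^R\Str\bigl(\tfrac{\pa \wt\bbA_{\fD}^t}{\pa t} e^{-(\wt\bbA_{\fD}^t)^2}\bigr)\,dt$ and the integral $\int_0^\infty \hat\eta(t)\,dt$ which by Lemma \ref{lem:dt}(ii) equals $-\hat\eta(\eth^{H,\cK}_{\fD})$ (or $-\hat\eta_Q(\eth^{H,\cK}_{\fD})$ if a spectral section $Q$ for $\eth^{H,\cK}_{\fD}$ has been used).

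For part (iii), the argument is simplest: when $\eth^V_{\fD}$ is perturbed to be invertible by $A_P^{\eps}$, equation \eqref{N110} of Proposition \ref{prop:HeatKernels} shows that $N^G_{11,0}(e^{-t(\wt\bbA_{\fD}^t)^2})=0$, so the heat kernel is genuinely trace-class, the renormalized trace becomes the usual supertrace, Lemma \ref{lem:dt}(i) gives an honest exact form (with no $\hat\eta(t)$ term), and the small-time limit is \eqref{Time0Perturbed}. Passing to cohomology kills the exact form and yields \eqref{PerturbedIndex}. For part (i), one uses the unperturbed Bismut superconnection, keeps the equation at the level of differential forms, and rewrites the transgression term explicitly; this gives \eqref{UnperturbedIndex}. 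For part (ii), one perturbs only the horizontal operator $\eth^{H,\cK}_{\fD}$ by a small smoothing perturbation $Q_R^{\eps}\cP_{\cK}$; Lemma \ref{lem:dt}(ii) then provides the $-\hat\eta_Q(\eth^{H,\cK}_{\fD})$ term (up to an exact form coming from the replacement of $\wt\bbB_{\psi_D}$ by $\hat\bbB_{\psi_D}$), and after passing to cohomology we obtain \eqref{SmallPerturbedIndex}.

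The main obstacle is the careful bookkeeping of all three rescalings and their interaction with the various supertraces and Clifford gradings, which has already been absorbed in the statements of Lemmas \ref{lem:dt} and \ref{lem:Smallt}; at the level of the proof of the theorem itself, the only substantive issue is ensuring convergence of the time-integrals involved. The large-$t$ convergence follows from Lemma \ref{lem:Bigt} together with exponential decay of the heat kernel once $\eth^{H,\cK}_{\fD}$ (or its perturbation) is invertible. The small-$t$ convergence follows from Lemma \ref{lem:Smallt} together with the vanishing of the integrands at the corners $\bhs{00,2}\cap\bhs{\phi\phi,2}$ and $\bhs{\phi\phi,2}\cap\bhs{11,0}$, which is a consequence of the remark following Proposition \ref{RescaledNormalOp} that the leading terms at the two rescaled faces commute with $\df e(\tfrac{dx}{x})$. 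No step beyond those already proved earlier in the section is required.
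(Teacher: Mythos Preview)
Your proof is correct and follows exactly the same approach as the paper: integrate the formula of Lemma~\ref{lem:dt} from $t=0$ to $t=\infty$ and evaluate the endpoints using Lemmas~\ref{lem:Bigt} and~\ref{lem:Smallt}. The paper's own proof is a single sentence to this effect, and your more detailed case-by-case discussion simply spells out what that sentence means.
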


{\bf Remark} 
The choices made in building a Fredholm perturbation affect the $\hat\eta$ invariants by adding closed forms. In some cases it can be worthwhile to keep track of these forms \cite{Piazza}, but, as we do not, we have stated parts $(ii)$ and $(iii)$ of the theorem as equalities in cohomology.

{\bf Remark}
Notice that if $P_1$ and $P_2$ are both spectral sections for $\eth^V_{\fD}$ satisfying \eqref{SpecSec} for functions $R_1, R_2 : D \to \bbR^+$, then at any point $d \in D$ we have
\begin{gather*}
	\Ran P_1 \cup (\Ran P_2)^\perp \subseteq
	\bplus_{ - R_1(d) \leq \lambda \leq R_2(d) } E_\lambda, \\
	(\Ran P_1 + (\Ran P_2)^\perp )^\perp \subseteq
	\bplus_{ - R_2(d) \leq \lambda \leq R_1(d) } E_\lambda,
\end{gather*}
hence both of these spaces are finite dimensional. Thus the spaces $\Ran P_1$ and $(\Ran P_2)^\perp$ form a {\em Fredholm pair} of subspaces of $L^2$ at every point $d\in D$ and define an element of $K(D)$,
\begin{equation*}
	[P_1,P_2] := 
	[\Ran P_1 \cup (\Ran P_2)^\perp] -
	\lrspar{ (\Ran P_1 + (\Ran P_2)^\perp )^\perp }.
\end{equation*}
Melrose and Piazza \cite[Proposition 17]{MelrosePiazza} proved an Agranovich-Dynin type formula
\begin{equation*}
	\hat\eta_{P_1} - \hat\eta_{P_2} = \Ch( [P_1, P_2] ) \text{ in } H^\ev D,
\end{equation*}
and applying this to $(ii)$ and $(iii)$ above yields relative index theorems.

{\bf Remark} 
A family of operators with constant rank kernel has a canonical choice of spectral section, $\cP_{\geq}$, the projection onto the eigenspaces with positive eigenvalues. As explained in \cite[\S 16]{MelrosePiazza} in this case the usual formula for the $\hat\eta$ invariant converges and differs from $\hat\eta_{\cP_\geq}$ by the Chern character of the kernel. Thus when $\eth^{H,\cK}_{\fD}$ has constant rank kernel we get from $(ii)$,
\begin{multline*}
\Ch\lrpar{ \Ind_{\cP_{\geq}(\eth^{H,\cK}_{\fD})}\lrpar{\eth_{\fD}} }
= \frac1{\lrpar{2\pi i}^{n/2}} \int_{M/B} \hat A\lrpar{M/B}\Ch'\lrpar E  \\ 
- \frac1{\lrpar{2\pi i}^{\lfloor\frac{h+1}{2}\rfloor}} \int_{D/B} \hat A\lrpar{D/B}\hat\eta^+\lrpar{ \eth^V_{\fD} }
- \lrpar{ \hat\eta\lrpar{ \eth^{H,\cK}_{\fD} } + \frac12\Ch(\ker \eth^{H,\cK}_{\fD}) }.
\end{multline*}
However, when $\eth^V_{\fD}$ has constant rank kernel, $\cP_{\geq}$ is not a $\Cl(T^*D^+/B)$ spectral section.

\section{The $\fC$ families index theorem}

In \cite{Moroianu}, Moroianu deduces the $\fC$ index theorem for a single Fredholm generalized Dirac operator by reducing it to the $\fD$ index theorem for the conformally associated generalized Dirac operator.
As explained in \S\ref{sec:ConfRelMet}, we can extend Moroianu's argument to families of operators as long as the dimension of the kernel and cokernel are constants independent of the base point. Since we can always achieve this by a compact perturbation we get the following theorem:

\begin{corollary}\label{PhiFamInd}
Let $\eth_0$ be a family of $\fC$ Dirac-type operators such that $\eth^V_{\fD}$ admits a spectral projection, $P$, and let $\eth_p$ be as in \S\ref{sec:ConfRelMet}, then the following equation holds in $H^\ev B$
\begin{multline*}
\Ch\lrpar{ \Ind_P(\eth_p) } 
= \Ch\lrpar{ \Ind_P(\eth_{\fD}) }
\\= \frac1{\lrpar{2\pi i}^{n/2}} \int_{M/B} \hat A\lrpar{M/B}\Ch'\lrpar E 
- \frac1{\lrpar{2\pi i}^{\lfloor\frac{h+1}{2}\rfloor}} \int_{D/B} \hat A\lrpar{D/B}\hat\eta^+_P\lrpar{ \eth^V_{\fD} }.
\end{multline*}
\end{corollary}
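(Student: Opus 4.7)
The plan is to deduce this corollary directly from part (iii) of Theorem~\ref{FamiliesIndex} by transporting the $\fC$ problem to the conformally related $\fD$ problem via the machinery of \S\ref{sec:ConfRelMet}. The conformal covariance $\eth_p = x^{-p(n+1)/2}\eth_0 x^{p(n-1)/2}$ interpolates between the $\fC$ operator $\eth_0$ at $p=0$ and the $\fD$ operator $\eth_{\fD}$ at $p=1$, and Hitchin's formula, together with the isometry $x^{pn/2}:L^2(g_p)\to L^2(g_{\fC})$, makes these operators unitarily equivalent up to a conjugation that preserves the Fredholm index.

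First, I would reduce to the constant-rank case. Given any $A_P^\eps\in\Psi^{-\infty}_{\fC}(M/B;E)$ associated to the $\Cl(T^*D^+/B)$-spectral section $P$ of $\eth^V_{\fD}$, the end of \S\ref{sec:Perturbations} provides an additional family of smoothing operators, compactly supported in the interior of the fibers, whose sum with $A_P^\eps$ makes $\ker(\eth_0 + A_P^\eps)$ a smooth $\bbZ/2$-graded vector bundle over $B$. Absorbing this into $A_P^\eps$ without further notation, Lemma~\ref{GenMoroianu} applies and identifies $\Ind_P(\eth_p)$ with the index of the family
\begin{equation*}
\eth_{\fD} + x^{-(n+1)/2} A_P^\eps x^{(n-1)/2}
\end{equation*}
in $K(B)$. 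Since commutators of $\fC$ operators with powers of $x$ vanish to infinite order at the boundary, this perturbed family can in turn be replaced by $\eth_{\fD} + \tfrac{1}{x}A_P^\eps$, as noted in \eqref{fDPerturbation}. The resulting operator is precisely a large smoothing perturbation of $\eth_{\fD}$ associated to the same spectral section $P$, so $\Ind_P(\eth_p) = \Ind_P(\eth_{\fD})$ in $K(B)$.

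With this identification in hand, I would then invoke part (iii) of Theorem~\ref{FamiliesIndex} for $\eth_{\fD}$ with perturbation $A_P^\eps$, which yields
\begin{equation*}
\Ch(\Ind_P(\eth_{\fD})) = \frac{1}{(2\pi i)^{n/2}}\int_{M/B}\hat A(M/B)\Ch'(E) - \frac{1}{(2\pi i)^{\lfloor (h+1)/2\rfloor}}\int_{D/B}\hat A(D/B)\hat\eta^+_P(\eth^V_{\fD})
\end{equation*}
in $H^{\ev}(B)$. Equating Chern characters of indices gives the stated formula for $\Ch(\Ind_P(\eth_p))$.

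The only delicate point to check is that the vertical boundary family $\eth^V_{\fD}$ which appears in the $\hat\eta^+_P$ term is genuinely the right model operator associated to the original $\fC$ family $\eth_0$. This is the essential content of the naturality identity $N(\eth_{\fC}) = N(x\eth_{\fD})$ recalled in \S\ref{sec:ConfRelMet}: the normal (and hence vertical boundary) operator is unchanged by the conformal rescaling, so the spectral section $P$ of $\eth^V_{\fD}$ is intrinsically attached to $\eth_0$ and the right-hand side is well defined independently of the path from $p=0$ to $p=1$. Because the cohomology class of $\hat\eta^+_P(\eth^V_{\fD})$ depends only on $P$ (different choices of $A_P^\eps$ differ by an exact form, as observed after Lemma~\ref{lem:Smallt}), no additional ambiguity arises. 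I do not expect any serious obstacle: all the analytic work has been done in Theorem~\ref{FamiliesIndex} and Lemma~\ref{GenMoroianu}, and the corollary is essentially a bookkeeping exercise matching perturbations on the two sides of the conformal equivalence.
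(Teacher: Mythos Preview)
Your proposal is correct and follows precisely the approach the paper takes: the corollary is obtained by combining Lemma~\ref{GenMoroianu} and the remark leading to \eqref{fDPerturbation} to identify $\Ind_P(\eth_p)=\Ind_P(\eth_{\fD})$, and then invoking Theorem~\ref{FamiliesIndex}(iii). Your write-up simply makes explicit the details the paper leaves to the reader; the only minor overstatement is that commutators of $\fC$ operators with powers of $x$ vanish at the boundary (not necessarily to infinite order), but this suffices for the argument since only the normal operator matters for the index identification.
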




\begin{thebibliography}{99}
     \bibitem{Albin}
	\script{Albin, P.} 
	{\em Renormalizing Curvature Integrals on Poincar\'e-Einstein manifolds.}
	preprint, 2005, available online at arXiv:math.DG/0504161.
     \bibitem{AlbinRochon}
	\script{Albin, P.; Rochon, F.} 
	{\em A local family Index formula for $\db$-operators on punctured Riemann surfaces.}
	preprint, 2008, available online at arXiv:
     \bibitem{APS}
	\script{Atiyah, M. F.; Patodi, V. K.; Singer, I. M.} 
	{\em Spectral asymmetry and Riemannian geometry I.}
	Math. Proc. Cambridge Philos. Soc.  {\bf 77}  (1975), 43--69.
     \bibitem{BGV} 
	\script{Berline, N.; Getzler, E.; Vergne, M.}
	{\em Heat kernels and Dirac operators.}
	Grundlehren der Mathematischen Wissenschaften
	[Fundamental Principles of Mathematical Sciences], 298.
	Springer-Verlag, Berlin, 1992. viii+369 pp. ISBN: 3-540-53340-0
     \bibitem{BV} 
	\script{Berline, N.; Vergne, M.}
	{\em A proof of Bismut local index theorem for a family of Dirac operators.}
	Topology  {\bf 26}  (1987),  no. 4, 435--463.
     \bibitem{Bismut} 
	\script{Bismut, J. M.}
	{\em The Atiyah-Singer index theorem for families of Dirac operators: two heat equation proofs.}
	Invent. Math.  {\bf 83}  (1985),  no. 1, 91--151.
     \bibitem{BC0} 
	\script{Bismut, J. M.; Cheeger, J.}
	{\em $\eta$-invariants and their adiabatic limits.}
	J. Amer. Math. Soc. 2 (1989), {\bf 1}, 33--70.     
     \bibitem{BC1} 
	\script{Bismut, J. M.; Cheeger, J.}
	{\em Families index for manifolds with boundary, superconnections, and cones. I. 
	Families of manifolds with boundary and Dirac operators.}
	J. Funct. Anal. 89 (1990), {\bf 2}, 313--363.
     \bibitem{BC2} 
	\script{Bismut, J. M.; Cheeger, J.}
	{\em Families index for manifolds with boundary, superconnections and cones. II. The Chern character.} 
	J. Funct. Anal. 90 (1990), {\bf 2}, 306--354.
     \bibitem{BC3} 
	\script{Bismut, J. M.; Cheeger, J.}
	{\em Remarks on the index theorem for families of Dirac operators on manifolds with boundary.}
	Differential geometry,  59--83, Pitman Monogr. Surveys Pure Appl. Math., 52, 
	Longman Sci. Tech., Harlow, 1991.
     \bibitem{BGS} 
	\script{Bismut, J. M.; Gillet, H.; Soulé, C.}
	{\em Analytic torsion and holomorphic determinant bundles. II. Direct images and Bott-Chern forms.}
	Comm. Math. Phys.  {\bf 115}  (1988),  no. 1, 79--126.
     \bibitem{BJP} 
	\script{Borthwick, D.; Judge, C.; Perry, P. A.}
	{\em Selberg's zeta function and the spectral geometry of geometrically finite hyperbolic surfaces.}
	Comment. Math. Helv.  {\bf 80}  (2005),  no. 3, 483--515.
    \bibitem{DaiMelrose} 
	\script{Dai, X.; Melrose, R. B.}
	{\em Adiabatic limit of analytic torsion.}
	preprint.
     \bibitem{GilMendoza}
	\script{Gil, J. B.; Mendoza, G. A.}
	{\em Adjoints of elliptic cone operators.}
	Amer. J. Math.  {\bf 125}  (2003),  no. 2, 357--408.
     \bibitem{Hassell}
	\script{Hassell, A.}
	{\em Analytic surgery and analytic torsion.}
	Comm. Anal. Geom.  {\bf 6}  (1998),  no. 2, 255--289.
     \bibitem{HMM}
	\script{Hassell, A.; Mazzeo, R.; Melrose, R. B.}
	{\em Analytic surgery and the accumulation of eigenvalues.}
	Comm. Anal. Geom.  {\bf 3}  (1995),  no. 1-2, 115--222.
     \bibitem{HHM}
	\script{Hausel, T.; Hunsicker, E.; Mazzeo, R.}
	{\em Hodge cohomology of gravitational instantons.}
	Duke Math. J.  {\bf 122}  (2004),  no. 3, 485--548.
    \bibitem{LMP}
	\script{Leichtman, E.; Mazzeo, R.; Piazza, P.}
	{\em The index of Dirac operators on manifolds with fibered boundaries.}
	preprint, 2006, available online at arXiv:math.DG/0609614.
   \bibitem{MMS}
	\script{Mathai, V.; Melrose, R. B.; Singer, I. M.}
	{\em The index of projective families of elliptic operators.}
	Geom. Topol. {\bf 9}  (2005), 341--373	
    \bibitem{Mazzeo:Edge} 
       \script{Mazzeo, R.}
       {\em Elliptic theory of differential edge operators. I.}
       Comm. Partial Differential Equations {\bf 16} (1991), no. 10, 1615--1664.
    \bibitem{MazzeoMelrose} 
	\script{Mazzeo, R.; Melrose, R. B.}
	{\em Pseudodifferential operators on manifolds with fibred boundaries.}
	Asian J. Math. {\bf  2}  (1998),  no. 4, 833--866.
    \bibitem{APS Book} 
	\script{Melrose, R. B.}
	{\em The Atiyah-Patodi-Singer index theorem.}
	Research Notes in Mathematics, 4. A K Peters, Ltd.,
	Wellesley, MA, 1993. xiv+377 pp. ISBN 1-56881-002-4
    \bibitem{Melrose:Corners} 
	\script{Melrose, R. B.}
	{\em Differential analysis on manifolds with corners.}
	preprint, 1996,
	available online at http://www-math.mit.edu/~rbm/book.html
     \bibitem{MelroseNistor} 
	\script{Melrose, R. B.; Nistor, V.}
	{\em Homology of pseudodifferential operators I. Manifolds with boundary.}
	preprint, 1996, available online at arXiv:funct-an/9606005.
    \bibitem{MelrosePiazza} 
	\script{Melrose, R. B.; Piazza, P.}
	{\em Families of Dirac operators, boundaries and the $b$-calculus.}
	J. Differential Geom.  {\bf 46}  (1997),  no. 1, 99--180.    
    \bibitem{MelrosePiazza2} 
	\script{Melrose, R. B.; Piazza, P.}
	{\em An index theorem for families of Dirac operators on odd-dimensional manifolds with boundary.}
	J. Differential Geom.  {\bf 46}  (1997),  no. 2, 287--334.    
     \bibitem{MelroseRochon} 
 	\script{Melrose, R. B.; Rochon, F.}
 	{\em Index in K-theory for Families of Fibred Cusp Operators.}
 	To appear in the journal of K-Theory, 2006, available online at arXiv:math.DG/0507590.
    \bibitem{Moroianu}
	\script{Moroianu, S.}
	{\em Fibered cusp versus $d$-index theory.}
	To appear in Rendiconti del Seminario Matematico della Università di Padova,
	available online at arXiv:math.DG/0610723.
     \bibitem{Muller} 
	\script{M\"uller, W.}
	{\em Manifolds with cusps of rank one. Spectral theory and $L\sp 2$-index theorem.}
	Lecture Notes in Mathematics, 1244. Springer-Verlag, Berlin, 1987.
     \bibitem{Piazza} 
	\script{Piazza, P.}
	{\em Determinant bundles, manifolds with boundary and surgery.}
	Comm. Math. Phys.  {\bf 178}  (1996),  no. 3, 597--626. 
     \bibitem{Quillen}
	\script{Quillen, D.}
	{\em Determinants of Cauchy-Riemann operators on Riemann surfaces.}
	Funct. Anal. and its Appl. {\bf 19} (1985), no. 1, 37-41.
     \bibitem{Sarnak}
	\script{Sarnak, P.}
	{\em Determinants of Laplacians.}
	Comm. Math. Phys.  {\bf 110}  (1987),  no. 1, 113--120.
    \bibitem{TZ} 
	\script{Takhtajan, L. A.; Zograf, P. G.}
	{\em A local index theorem for families of $\overline\partial$-operators
		on punctured Riemann surfaces and a new K\"ahler metric on their moduli spaces.}
	Comm. Math. Phys., {\bf 137} (1991), no.2, 399-426.
    \bibitem{Vaillant} 
	\script{Vaillant, B.}
	{\em Index and spectral theory for manifolds with generalized fibred cusps.}
	Ph.D. dissertation, Bonner Math. Schriften 344, 
	Univ. Bonn, Mathematisches Institut, Bonn, 2001.
\end{thebibliography}
\end{document}